\theoremstyle{plain}
\newtheorem{theorem}{Theorem}[section]
\newtheorem*{theorem*}{Theorem}
\newtheorem{lemma}[theorem]{Lemma}
\newtheorem{corollary}[theorem]{Corollary}
\theoremstyle{definition}
\newtheorem{definition}[theorem]{Definition}
\newtheorem{remark}[theorem]{Remark}
\newtheorem{example}[theorem]{Example}
\definecolor{darkblue}{rgb}{0,0,0.7} 
\newcommand{\darkblue}{\color{darkblue}} 
\newcommand{\defn}[1]{\emph{\darkblue #1}} 
\DeclareMathAlphabet{\mathdutchcal}{U}{dutchcal}{m}{n}
\newcommand{\N}{\mathbb{N}}
\newcommand{\R}{\mathbb{R}}
\newcommand{\A}{\mathdutchcal A}
\newcommand{\AG}{\mathcal{A}}
\newcommand{\F}{\mathdutchcal F}
\renewcommand{\H}{\mathdutchcal H}
\newcommand{\Sh}{\mathdutchcal{Sh}}
\newcommand{\Reg}{\mathdutchcal{R}}
\newcommand{\PosBA}{P_B(\A)}
\newcommand{\Lines}{\mathdutchcal{L}}
\newcommand{\cov}{\mathdutchcal{c}}
\newcommand{\covd}{\mathdutchcal{d}}
\newcommand{\hypint}{\mathdutchcal{h}}
\renewcommand{\P}{\mathbf{P}}
\newcommand{\ce}{\mathbf{c}}
\newcommand{\x}{\mathbf{x}}
\newcommand{\y}{\mathbf{y}}
\newcommand{\m}{\mathbf{m}}
\newcommand{\n}{\mathbf{n}}
\newcommand{\p}{\mathbf{p}}
\newcommand{\ve}{\mathbf{v}}
\newcommand{\z}{\mathbf{z}}
\DeclareMathOperator{\sign}{sign}
\DeclareMathOperator{\Sep}{Sep}
\DeclareMathOperator{\con}{con}
\DeclareMathOperator{\Con}{Con}
\DeclareMathOperator{\inter}{int}
\DeclareMathOperator{\aff}{aff}
\DeclareMathOperator{\conv}{conv}
\DeclareMathOperator{\spa}{span}
\DeclareMathOperator{\pre}{pre}
\newcommand{\addresseshere}{%
  \enddoc@text\let\enddoc@text\relax
}
\title{Congruence Normality of Simplicial Hyperplane Arrangements via Oriented Matroids}
\date{\today}
\author[M.~Cuntz]{Michael Cuntz}
\address[M.~Cuntz]{Leibniz Universit\"at Hannover, Institut f\"ur Algebra, Zahlentheorie und Diskrete Mathematik, Fakult\"at f\"ur Mathematik und Physik, Welfengarten 1, D-30167 Hannover, Germany}
\email{cuntz@math.uni-hannover.de}
\urladdr{https://www.iazd.uni-hannover.de/de/cuntz}
\author[S.~Elia]{Sophia Elia$^{ \sharp}$}
\address[S.~Elia, J.-P. Labb\'e]{Institut f\"ur Mathematik, Freie Universit\"at Berlin, Arnimallee 2, 14195 Berlin, Germany}
\email{sophiae56@math.fu-berlin.de, labbe@math.fu-berlin.de}
\urladdr{http://page.mi.fu-berlin.de/\{sophiae56\},\{labbe\}}
\thanks{$^{ \sharp}$With the support of the Research Training Group 2434 ``Facets of Complexity''.}
\author[J.-P.~Labb\'e]{Jean-Philippe Labb\'e$^{\flat}$}
\thanks{$^{\flat}$With the support of the DFG Collaborative Research Center TRR~109 ``Discretization in Geometry and Dynamics''.}
\keywords{Simplicial hyperplane arrangements, poset of regions, congruence normality and uniformity, covectors, shards}
\subjclass[2010]{Primary 52C35; Secondary 14N20,52C40,}
\begin{document}
\begin{abstract}
A catalogue of simplicial hyperplane arrangements was first given by Gr\"unbaum in 1971.
These arrangements naturally generalize finite Coxeter arrangements and also the weak order through the poset of regions.
The weak order is known to be a congruence normal lattice, and congruence normality of lattices of regions of simplicial arrangements can be determined using polyhedral cones called \emph{shards}.

In this article, we update Gr\"unbaum's catalogue by providing normals realizing all known simplicial arrangements with up to 37 lines and key invariants.
Then, we add structure to this catalogue by determining which arrangements \emph{always}/\emph{sometimes}/\emph{never} lead to congruence normal lattices of regions.
To this end, we use oriented matroids to recast shards as covectors to determine congruence normality of large hyperplane arrangements.
We also show that lattices of regions coming from finite Weyl groupoids of any rank are always congruence normal.

\end{abstract}

\maketitle

\section{Introduction}

The first catalogue of simplicial hyperplane arrangements of rank~$3$ appeared in 1971~\cite{grunbaum_1971}.
This catalogue included three infinite families and 90 sporadic arrangements.
Since then, the catalogue has changed: certain arrangements have been found to be isomorphic while some new arrangements have also been found, bringing the number of sporadic arrangements to 95~\cite{grunbaum_2009,cuntz_greedy_2020}.
The list is known to be complete for arrangements with up to 27 lines~\cite{cuntz_simplicial_2012}.
The current updated list along with several invariants is available in the present article, see Table~\ref{tab:invariants} in Section~\ref{sec:invariants} and Appendices~\ref{app:lists} and~\ref{app:wd}.
The following questions are still open:
\emph{Is the list complete? Is there a finite list at all?}
In order to answer these questions, it is natural to search for structures lurking behind the list.
Simplicial arrangements can be thought of as generalizations of finite Coxeter arrangements.
Furthermore, they correspond to normal fans of simple zonotopes \cite[Theorem~7.16]{ziegler_lectures_1995}. 
What other combinatorial/geometric/algebraic structures regulate the list?

It turns out that finite Weyl groupoids provide an algebraic justification for around half of the sporadic arrangements.
Finite Weyl groupoids are algebraic structures generalizing Weyl groups that were introduced to better understand the symmetries of Nichols algebras and related Hopf algebras~\cite{heckenberger_weyl_2006,heckenberger_geometric_2011,cuntz_finite_2015}.
Each Weyl groupoid originates from the data of a ``Cartan graph'', leading to a so-called ``root system''.
In turn, these root systems generalize the usual notion of root system of a Weyl group.
Notably, they form the set of normals of certain simplicial hyperplane arrangements.
Finite Weyl groupoids of rank~$3$ have been classified and account for $53$ simplicial arrangements~\cite{cuntz_crystallographic_2011}. 

Posets of regions form a family of combinatorial structures that encode detailed information on hyperplane arrangements and the adjacency of regions.
For simplicial arrangements, the posets of regions are always lattices, no matter what the \emph{base region} of the poset is~\cite[Theorem~3.4]{bjoerner_hyperplane_1990}.
Reading showed that simpliciality can be weakened to \emph{tightness}---which is a connectivity condition on facets of regions---in order to obtain lattices~\cite[Chapter~9]{reading_lattice_2016} (see Lemma~\ref{lem:bez} below).
Once again, simplicial arrangements through their lattices of regions provide generalizations, this time of the weak order of finite Coxeter groups.
Unlike in the Coxeter case, a simplicial arrangement may lead to several non-isomorphic lattices of regions.
Apart from being lattices, much less is known about the poset of regions of simplicial arrangements.

Lattice congruences of the weak order of Coxeter arrangements generate several objects of study.
For example, the permutahedron is perhaps the most studied example of a simple zonotope, that comes from the braid arrangement, or Coxeter arrangement of type $A$.
The corresponding poset of regions is the weak order of the symmetric group and is a lattice.
Moreover, Tamari and Cambrian lattices, generalized permutahedra, and associahedra are all related to lattice congruences of the weak order~\cite{reading_cambrian_2006,postnikov_permutahedra_2009,hohlweg_permutahedra_2011}.
In particular, in type~$A$ and $B$, every lattice congruence leads to a polytope~\cite{pilaud_quotientopes_2019,padrol_shard_2020}.
To which extent do these constructions extend to general simplicial arrangements?
We focus here on two important properties used to study lattice congruences and shard polytopes: \emph{congruence normality} and \emph{congruence uniformity}.
Coxeter arrangements are congruence normal and uniform \cite{caspard_cayley_2004}.
Congruence uniform lattices admit a bijection between their join-irreducible elements and the join-irreducible elements in the lattice of lattice congruences. 
Congruence uniform lattices are thus particularly nice lattices as they allow one to more easily study the lattice of congruences.
Reading characterized congruence uniformity of posets of regions using \emph{tightness} and \emph{shards} (i.e.\ pieces of hyperplanes)~\cite[Corollary~9-7.22]{reading_lattice_2016}.
Reading also showed that supersolvable hyperplane arrangements have congruence uniform posets of regions for some canonical choice of base region~\cite{reading_lattice_2003}.
Congruence uniform lattices admit a combinatorial construction whose geometric aspects in this context have yet to be explored in detail.

In this article, we determine congruence uniformity and normality of posets of regions of simplicial hyperplane arrangements of rank~$3$ and draw several conclusions.
To do so, we approach posets of regions through the oriented matroids naturally associated to the normals of the hyperplane arrangements, which are presented in Appendix~\ref{app:lists}.
Covectors of the oriented matroid can be used to encode the ``facial weak order'' of simplicial hyperplane arrangements~\cite{dermenjian_facialweak_2018}.
Here, we use covectors and the \emph{intersection} operation as our main tools to elevate Reading's characterization of congruence uniformity to the level of oriented matroids (see Theorem~\ref{thm:covector_forcing} and Corollary~\ref{cor:covector_forcing}).
Namely, we introduce \emph{shard covectors}---which are covectors with some ``$*$'' entries---and show they are in bijection with shards (see Theorem~\ref{thm:shard_bijection}).

This approach led to the following results.
If a set of normals admits a \emph{root poset} with respect to a base region, then its lattice of regions is congruence normal (see Theorem~\ref{thm:constructible}).
In particular, the posets of regions of hyperplane arrangements coming from finite Weyl groupoids are always congruence normal and congruence uniform (see Corollary~\ref{cor:cn_weyl}).
This result provides a new proof that finite Coxeter arrangements are obtainable through a finite sequence of interval doublings (i.e. congruence uniform) \cite[Theorem~6]{caspard_cayley_2004}.
We further classify the known rank-$3$ simplicial arrangements according to whether their posets of regions are always or sometimes or never congruence normal (see Table~\ref{tab:cn_part}).
The approach through covectors gives a way to determine congruence normality of posets of regions without the data of the poset or resorting to polyhedral objects (i.e.\ shards).
Notably, this classification could not have been carried out through the computation of the posets of regions due to their large size.
Hence, this framework provides an oriented matroid approach to study congruence normality and uniformity for large posets of regions.
As an interesting outcome of this classification, five arrangements have exceptional behavior.
Two of the five arrangements are always congruence normal: the non-crystallographic arrangement corresponding to the Coxeter group $H_3$ and its point-line dual arrangement which has 31 hyperplanes.
The three other arrangements are never congruence normal: they have yet to show any connection to other known structures.
Furthermore, we provide instructive examples which give deeper insight into congruence uniformity for posets of regions.
We verified that within supersolvable simplicial arrangements (by \cite[Theorem~1.2]{cuntz_supersolvable_2019} these are the arrangements in 2 of the 3 infinite families) only four are always congruence normal and all others are only sometimes congruence normal, see Theorems~\ref{thm:cn_f2} and~\ref{thm:cn_f3}.
The algorithms used to carry out the verifications and the data to construct known simplicial hyperplane arrangements are available as a \texttt{Sage}-package \cite{cn_hyperarr}.

The article is structured as follows.
In Section~\ref{sec:prelim}, we present the necessary background notions on lattice congruences, posets of regions, congruence normality and uniformity and the theory of shards.
In Section~\ref{sec:cong_norm}, we recast shards and the forcing relation using covectors.
In Section~\ref{sec:simplicial}, we present the result of the application of the approach of Section~\ref{sec:cong_norm} to the known rank-$3$ simplicial hyperplane arrangements.
In Section~\ref{sec:invariants}, we present combinatorial and geometric invariants of the known rank-3 simplicial hyperplane arrangements with up to 37 hyperplanes. 
In Appendix~\ref{app:lists}, we give normals to realize each of these arrangements.
Finally, in Appendix~\ref{app:wd}, we give a wiring diagram description for these arrangements.
\medskip

{\bf Acknowledgements.}
The authors would like to express their gratitude to Vincent Pilaud and Julian Ritter for important discussions leading to the results in this paper.

\section{Preliminaries}
\label{sec:prelim}
We use the following notation: $\N=\{0,1,2,\dots\}$, $d,m\in\N\setminus\!\{0\}$, and ${[m]:=\{1,2,\dots,m\}}$.
We use bold faced $\n,\p,\x,$ etc. to denote vectors in the real Euclidean space $\R^{d}$ equipped with the usual dot product \mbox{$\R^d \times \R^d\rightarrow~\R$}.
Let $\P$ denote a finite, ordered set of vectors.
The linear span of $\P$ is denoted $\spa(\P)$, its affine hull by $\aff(\P)$, and its convex hull by $\conv(\P)$.
To ease reading, we often abuse notation and write for instance $\spa(\x_1,\x_2)$ instead of $\spa(\{\x_1,\x_2\})$.
The orthogonal complement of a linear subspace $A\subseteq \R^d$ is denoted~$A^\top$.
The relative interior of a subset~$\P$ of $\R^d$ is denoted by $\inter(\P)$.

In Section~\ref{ssec:latt_cong}, we review the notion of a lattice congruence.
In Section~\ref{ssec:poset_region}, we define hyperplane arrangements and posets of regions.
In Sections~\ref{ssec:cong_normal} and~\ref{ssec:cong_unif}, we discuss the notions of congruence normality and uniformity.
Finally, in Section~\ref{ssec:cong_shards}, we describe Reading's characterization of congruence uniformity for tight hyperplane arrangements using \emph{shards}.
The material presented in this section is mostly based on material treated in the book chapter \cite[Chapter~9]{reading_lattice_2016}. 

\subsection{Lattice congruences}
\label{ssec:latt_cong}

Let $L=(P;\wedge,\vee)$ be a finite lattice, where $P$ is a poset $(P,\leq)$.
An element $j \in L$ is \defn{join-irreducible} if $j$ covers a unique element~$j_\bullet\in L$. 
Similarly, an element $m\in L$ is \defn{meet-irreducible} if $m$ is covered by a unique element $m^\bullet\in L$.
We denote the subposet of join-irreducible elements of a lattice~$L$ by~$L_{\vee}$ and the subposet of meet-irreducible elements by $L_{\wedge}$.
An \defn{order ideal} of a poset $P$ is a subposet ${Q \subseteq P}$ that satisfies $x \in Q$ and $y \leq x  \Rightarrow y \in Q$.
The order ideals of a poset $P$ can be ordered by containment to get the \defn{poset of order ideals} denoted $\mathcal{O}(P)$.
When $L$ is self-dual, join- and meet-irreducible elements are canonically in bijection.
The dual map therefore allows one to refine statements involving~$L$ and its irreducible elements.
Join-irreducible elements (and dually meet-irreducible elements) and posets of order ideals are very useful to understand finite distributive lattices.

\begin{lemma}[{\cite[Theorem~17.3]{birkhoff_combination_1933}}]
\label{lem:fund_thm_dist}
Let $L$ be a lattice, $L_{\vee}$ be its subposet of join-irreducible elements, and $\mathcal{O}(L_{\vee})$ be the poset of order ideals of $L_{\vee}$. 
If $L$ is finite and distributive, then $L$ is isomorphic to $\mathcal{O}(L_{\vee})$.
\end{lemma}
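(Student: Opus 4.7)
The plan is to construct an explicit order isomorphism between $L$ and $\mathcal{O}(L_\vee)$ and verify the four standard compatibilities. Define
\[
\phi\colon L \to \mathcal{O}(L_\vee), \qquad \phi(x) = \{\,j \in L_\vee : j \le x\,\},
\]
which is immediately an order ideal of $L_\vee$ and depends monotonically on $x$. In the reverse direction, define
\[
\psi\colon \mathcal{O}(L_\vee) \to L, \qquad \psi(I) = \bigvee_{j \in I} j,
\]
with the convention $\psi(\emptyset) = \hat 0$. Order-preservation of $\psi$ is immediate from monotonicity of joins. So the proof reduces to showing the two compositions are identities.

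First I would verify $\psi \circ \phi = \mathrm{id}_L$. This amounts to the statement that every element $x\in L$ equals the join of the join-irreducibles below it. The plan here is to argue by induction on the height of $x$ in the finite poset $L$: if $x$ is itself join-irreducible (or $\hat 0$), the claim is trivial; otherwise $x$ covers at least two elements, so $x$ is a join $x = a \vee b$ with $a,b < x$, and inductively $a$ and $b$ are joins of the join-irreducibles below them, each of which is still $\le x$. Note that this step does not require distributivity and works in any finite lattice.

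The key place where distributivity enters is in verifying $\phi \circ \psi = \mathrm{id}_{\mathcal{O}(L_\vee)}$. The inclusion $I \subseteq \phi(\psi(I))$ is obvious. For the other inclusion, suppose $j \in L_\vee$ satisfies $j \le \bigvee_{k\in I} k$. Using distributivity I would write
\[
j = j \wedge \bigvee_{k \in I} k = \bigvee_{k \in I}(j \wedge k).
\]
Since $j$ is join-irreducible, this forces $j = j \wedge k_0$, i.e.\ $j \le k_0$, for some $k_0 \in I$. Because $I$ is an order ideal, $j \in I$. This is the step I expect to be the main obstacle in the sense that it is the only place distributivity is essential: without it, the decomposition $j = \bigvee (j \wedge k)$ may fail, and indeed the theorem itself fails for non-distributive lattices (pentagon, diamond).

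Combining the two, $\phi$ and $\psi$ are mutually inverse order-preserving bijections, so $L \cong \mathcal{O}(L_\vee)$ as posets, hence as lattices. One small care point to flag is the base case of $\psi(\emptyset)$ and the correspondence with $\hat 0 \in L$, and the fact that since $L$ is finite every subset of $L_\vee$ has a join in $L$, so $\psi$ is well defined on all of $\mathcal{O}(L_\vee)$.
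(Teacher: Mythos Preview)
Your argument is the standard and correct proof of Birkhoff's representation theorem. The paper does not give its own proof of this lemma: it is stated with a citation to Birkhoff's 1933 paper and used as a black box. So there is nothing to compare against, and your proposal stands on its own.

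One tiny point worth making explicit when you write it up: in the step where you conclude from $j = \bigvee_{k\in I}(j\wedge k)$ that $j = j\wedge k_0$ for some $k_0$, you are using that join-irreducibility (defined in the paper as ``covers a unique element $j_\bullet$'') implies the same property for arbitrary finite joins, not just binary ones. This is immediate---if every $j\wedge k$ were strictly below $j$ it would lie below $j_\bullet$, forcing the join below $j_\bullet$---but since the paper's definition is phrased in terms of covers rather than binary joins, it is worth one line.
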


Recall that cosets of a normal subgroup $N \trianglelefteq G$ determine a \emph{congruence} relation, and lead to a quotient group $G/N$, which is the image of the map sending an element to its coset.
Analogously, in lattice theory, intervals play the roll of cosets, and under certain conditions, they form to a \emph{quotient lattice}.
In this case, the equivalence relation is called a lattice congruence. 
For a thorough discussion on congruences and quotient lattices, we refer the reader to \cite[Chapter~9-5 and 9-10]{reading_lattice_2016} and the references therein.

\begin{definition}[Lattice congruence, see e.g. {\cite[Proposition~9-5.2]{reading_lattice_2016}}]
\label{def:lat_cong}
An equivalence relation on the elements of a lattice~$L$ is a \defn{lattice congruence} if the following three conditions are satisfied:
\begin{enumerate}
	\item Every equivalence class is an interval.
	\item The map $\pi_\downarrow$ sending each element to the minimal 
		element in its equivalence class is order-preserving.
	\item The map $\pi_\uparrow$ sending each element to the maximal
		element in its equivalence class is order-preserving.
\end{enumerate}
Given a lattice congruence, the images of $\pi_\downarrow$ and $\pi_\uparrow$ are sublattices, i.e.\ the join and meet operations are preserved on the equivalence classes, and they are referred to as \defn{quotient lattices}.
\end{definition}

Lattice congruences of a lattice can be numerous and the relations between them may be challenging to describe.
In spite of that, the set of lattice congruences on a lattice $L$ may be partially ordered by refinement.
The equivalence relation with singleton classes is the smallest lattice congruence and its associated quotient lattice is the lattice itself.
Furthermore, the equivalence relation with a unique class is the coarsest lattice congruence whose associated quotient lattice has exactly one element.
It turns out that under this partial order, the set of lattice congruences forms a distributive lattice which is called the \defn{lattice of congruences} and is denoted by $\Con(L)$~\cite{funayama_distributivity_1942}. 
The lattices of congruences we consider here are finite and therefore complete.
Consequently, given any set of relations, there is a smallest lattice congruence which contains these relations \cite[Proposition 9-5.13]{reading_lattice_2016}.
This makes it possible to define two important congruences related to join- and meet-irreducible elements.
Consider a join-irreducible element $j\in L_{\vee}$, then there is a smallest lattice congruence $\con_\vee(j)$ such that $j$ and~$j_\bullet$ are equivalent.
Similarly, for a meet-irreducible element $m$, there is a smallest lattice congruence $\con_{\wedge}(m)$ such that $m$ and the unique element $m^\bullet$ that covers it are equivalent.
In this case, we say that the congruence $\con_\vee$ \defn{contracts} $j$, and that $\con_\wedge$ contracts $m$.
As $\Con(L)$ is finite and distributive, we may use Lemma~\ref{lem:fund_thm_dist} to obtain that $\Con(L)$ is isomorphic to $\mathcal{O}(\Con(L)_\vee)$.
That is to say that a congruence is determined by an order ideal of join-irreducible congruences, i.e., by the join-irreducibles it contracts~\cite[Corollary 9-5.15]{reading_lattice_2016}.

\begin{definition}
Let $\con_\vee: L_\vee \rightarrow \Con(L)$ be the map that sends a join-irreducible element $j \in L_{\vee}$ to the smallest lattice congruence in $\Con(L)$ such that $j \equiv j_\bullet$.
Dually, the map $\con_{\wedge}$ is similarly defined for meet-irreducible elements.
\end{definition}

The image of the map $\con_\vee$ is $\Con(L)_{\vee}$, i.e., the congruence $\con_\vee(j)$ is join-irreducible in $\Con(L)$ and for every join-irreducible congruence $\alpha$ in $\Con(L)$, there exists a join-irreducible $j\in L_\vee$ such that $\con_\vee(j)=\alpha$ \cite[Proposition 9-5.14]{reading_lattice_2016}.
It may happen that two distinct join-irreducibles give rise to the same congruence, i.e.\ that $\con_\vee$ is not injective, leading to an equivalence relation on join-irreducible elements in $L_\vee$.
Through the map $\con_\vee$, these equivalence classes of join-irreducible elements in $L$ are in bijection with join-irreducible congruences of $L$.

\subsection{Poset of regions of a real hyperplane arrangement}
\label{ssec:poset_region}

A \defn{(real) hyperplane}~$H$ is a co\-di\-men\-sion-1 affine subspace in $\R^d$:
\[
H := \{\x\in\R^d~:~\n\cdot\x=a \text{ for some } \n \in \R^d \text{ and } a\in\R\}.
\]
The vector $\n$ is called the \defn{normal} of $H$.
A \defn{finite hyperplane arrangement} $\A$ is a finite non-empty set of $m$ hyperplanes. 
If $a=0$ for all hyperplanes in $\A$, then the hyperplane arrangement is called \defn{central}.
In this case, the hyperplanes are completely determined by their normals.
We denote the hyperplanes in $\A$ by $H_1,\dots,H_m$ and often reuse their indices to refer to objects canonically related to them.
The \defn{rank} of $\A$ is the dimension of the linear span of the normal vectors of the hyperplanes in $\A$.
The complement of the arrangement in the ambient space $(\R^d \setminus \bigcup_{i\in[m]}H_i)$ is disconnected, and the closures of the connected components are the \defn{regions} of the arrangement.
The set of regions of $\A$ is denoted by $\Reg(\A)$.
A region is called \defn{simplicial} if the normal vectors of its facet-defining hyperplanes are linearly independent. 
A hyperplane arrangement is \defn{simplicial} if every region in its complement is simplicial. 
To proceed further, a \defn{base region} $B$ of $\A$ is chosen.
For each hyperplane $H_i \in \A$, we fix a normal vector~$\n_i\in\R^d$ such that $\n_i\cdot \x <0$, for all $\x\in B$.
Given a region~$R$ of $\A$, the \defn{separating set} $\Sep_B(R)$ of $R$ is the set of hyperplanes $H_i\in\A$ such that $\n_i\cdot\x>0,$ for all $\x\in R$. 
The separating set of a region is the set of hyperplanes that separate it from the base region $B$.

\begin{definition}[{Poset of regions, $\PosBA$}]
Let $\A$ be a hyperplane arrangement with base region $B$.
The \defn{poset of regions} $\PosBA$ of $\A$ with base region $B$ is the partially ordered set $(\Reg(\A),\leq)$ such that
\[
R_1 \leq R_2 \text{ if and only if } \Sep_B(R_1) \subseteq \Sep_B(R_2),
\]
for all $R_1,R_2\in \Reg(\A)$.
\end{definition}

An \defn{upper facet} of a region $R\in\Reg(\A)$ is a facet of $R$ which corresponds to a cover relation of $R$ in~$\PosBA$.
A hyperplane arrangement is \defn{tight} with respect to $B$ when the upper facets of every region intersect pairwise along a codimension-$2$ face, i.e.\ they are neighbors in the facet-adjacency graph.
When a hyperplane arrangement $\A$ is tight with respect to every base region, we say that $\A$ is tight.
For convenience, when a hyperplane arrangement is tight, we also call the corresponding posets of regions tight.
The usual definition of tightness also requires the dual statement to hold. 
As poset of regions are self-dual, we have restricted the statement to upper facets.
The following lemma is a refinement of \cite[Theorem~3.4]{bjoerner_hyperplane_1990}.

\begin{lemma}
\label{lem:bez}
Let $\A$ be a finite, central hyperplane arrangement with base region~$B$.
\begin{enumerate}
\item If $\A$ is tight with respect to $B$, then $\PosBA$ is a lattice. \cite[Theorem~9-3.2]{reading_lattice_2016}
\item If $\A$ is simplicial, then $\A$ is tight. \cite[Proposition~9-3.3]{reading_lattice_2016}
\end{enumerate}
\end{lemma}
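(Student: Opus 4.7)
The plan is to treat the two assertions separately since they are of quite different character. For (2), the proof is essentially immediate from the simplicial cone structure, while for (1), the argument is more delicate and uses tightness in an essential way.

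For part (2), I would fix an arbitrary base region $B$ and an arbitrary region $R \in \Reg(\A)$. Since $\A$ is simplicial, $R$ is a simplicial cone (modulo the lineality subspace $\spa(\A)^\top$), so its facet-defining hyperplanes have normals forming a basis of $\spa(\A)$. Writing the extreme rays of this cone as $\ve_1, \dots, \ve_d$ where $d$ is the rank of $\A$, the facet $F_i$ opposite $\ve_i$ is spanned by $\{\ve_j : j \neq i\}$, and $F_i \cap F_j$ is spanned by $\{\ve_k : k \neq i,j\}$, giving a codimension-2 face of $R$. In particular, any two upper facets of $R$ with respect to $B$ share a codimension-2 face, which is precisely the defining property of tightness. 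Since this holds for every base region, $\A$ is tight.

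For part (1), by finiteness and the existence of both a minimum ($B$) and a maximum ($-B$) in $\PosBA$, it suffices to establish existence of joins; meets then follow by duality. Given $R_1, R_2 \in \Reg(\A)$, set $U = \Sep_B(R_1) \cup \Sep_B(R_2)$ and
\[
\mathcal{U} = \{R \in \Reg(\A) : U \subseteq \Sep_B(R)\}.
\]
This set is nonempty since $-B \in \mathcal{U}$. The join, if it exists, is the unique minimum of $\mathcal{U}$. I would prove existence of this minimum by starting from a minimal element $R \in \mathcal{U}$ (which exists by finiteness) and arguing by contradiction that it is unique. Suppose $R' \in \mathcal{U}$ is another minimal element with $R \neq R'$. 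Along a shortest walk of upper-cover steps from $R_1$ up to $R$, choose the first hyperplane $H$ whose crossing leads out of the "compatible" side of $R'$, i.e.\ $H \notin \Sep_B(R')$. Then tightness applied to the current region provides a codimension-2 face shared between $H$ and another upper facet $H'$, and crossing around this codimension-2 face realizes an exchange of hyperplanes in the separating set. Iterating this exchange produces a region in $\mathcal{U}$ strictly below both $R$ and $R'$, contradicting minimality.

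The main obstacle is turning the local property of tightness into the global statement of uniqueness in part (1). The crucial technical step is an exchange lemma: two upper facets of a region $R$ sharing a codimension-2 face give rise to a diamond of four regions whose separating sets differ by the adjunction of the two corresponding hyperplanes. Making the induction rigorous requires tracking how separating sets evolve along walks in the facet-adjacency graph and verifying that the reroutes produced by tightness never leave $\mathcal{U}$, which in turn relies on both hyperplanes of any such diamond lying in $\Sep_B(R)$ for any $R \in \mathcal{U}$ above the diamond. Once this is in place, the existence of a unique minimum of $\mathcal{U}$, and hence of $R_1 \vee R_2$, follows.
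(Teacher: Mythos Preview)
The paper does not provide its own proof of this lemma; both parts are quoted from \cite{reading_lattice_2016} (refining \cite{bjoerner_hyperplane_1990}), so there is no argument in the paper to compare against. Your argument for part~(2) is correct and is the standard one.

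For part~(1), your sketch has a concrete error in the proposed exchange lemma: two upper facets $H,H'$ of a region $R$ sharing a codimension-$2$ face do \emph{not} in general yield a diamond of four regions. The rank-$2$ subarrangement through $H\cap H'$ may contain $k\ge 2$ hyperplanes, and the regions it cuts out form a $2k$-gon in $\PosBA$ with $R$ at the bottom; after crossing $H$ from $R$ one cannot in general immediately cross $H'$. The correct local consequence of tightness is that the two upper covers $R_H$ and $R_{H'}$ of $R$ have a join at the top of this polygon, with $\Sep_B(R_H\vee R_{H'})$ equal to $\Sep_B(R)$ together with all $k$ hyperplanes of the subarrangement, not just $H$ and $H'$. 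Beyond this, your rerouting strategy is too vague to constitute a proof: you do not specify which second upper facet to use at the region where $H$ is first crossed, why the modified walk still lands in $\mathcal{U}$ (in particular why it remains above $R_2$), or why iterating terminates at a region strictly below both minimal elements. You correctly flag this as the main obstacle, but as written the proposal does not overcome it.
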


Reading developed an approach to study congruences of lattices of regions that is thoroughly described in \cite[Chapter~9]{reading_lattice_2016}.
In particular, for posets of regions, tightness is equivalent to \emph{semidistributivity} \cite[Theorem~9-3.8]{reading_lattice_2016} (see Section~\ref{ssec:cong_unif} for the definition of semidistributivity).
Furthermore, in order to describe the interplay between join-irreducible elements, the combinatorial notion of ``polygonality'' of a lattice is used; in the case of posets of regions, this notion is equivalent to the notion of tightness \cite[Theorem~9-6.10]{reading_lattice_2016}. 
Using the polygonality property, it is possible to describe which join-irreducibles \emph{force} other ones to be contracted. 
This forcing relation can then be read off from the hyperplane arrangement using pieces of hyperplanes called \emph{shards} (see Definition~\ref{def:shards} in Section~\ref{ssec:cong_shards}).
The interest in the notion of tightness lies in the fact that being tight and having \emph{acyclicity} on shards characterizes congruence uniformity, see Theorem~\ref{thm:charac_uniform} in Section~\ref{ssec:cong_shards}.

Throughout this article, we restrict our study to finite, central, and tight hyperplane arrangements, so that the posets of regions are guaranteed to be complete lattices regardless of the choice of base regions. 
We refer the reader to \cite[Chapter~9-3, 9-6]{reading_lattice_2016} for further details on tightness and polygonality.

\subsection{Congruence normality}
\label{ssec:cong_normal}

\begin{definition}[{Congruence normality, \cite[Section~1, p.400]{day_congruence_1994}}]
Let $L$ be a lattice, $L_\vee\subseteq L$ be the subposet of join-irreducible elements of $L$, and $L_\wedge$ be the subposet of meet-irreducible elements of $L$.
The lattice $L$ is \defn{congruence normal} if
\[
j\leq m\quad \text{ implies }\quad \con_\vee(j) \neq \con_\wedge(m),
\]
for all $j\in L_\vee$, and $m\in L_\wedge$.
A hyperplane arrangement is called \defn{congruence normal} if its lattices of regions are congruence normal for every choice of base region. 
\end{definition}

Equivalently, finite congruence normal lattices are exactly the lattices obtained from a one-element lattice by a sequence of \emph{doublings} of \emph{convex sets} \cite[Section~3]{day_congruence_1994}, see also \cite[Theorem~3-2.39]{adaricheva_classes_2016} and \cite{geyer_generalized_1994}.
The following example illustrates a local condition showing how a lattice may fail to be congruence normal.

\begin{example}
\label{ex:L3_non_cn}
Consider the lattice $L_3$ with the Hasse diagram illustrated in Figure~\ref{fig:L3_non_cn}.
The element~$c$ is join-irreducible, and the smallest congruence $\con_\vee(c)$ such that $b\equiv c$ is illustrated on the right-hand side.

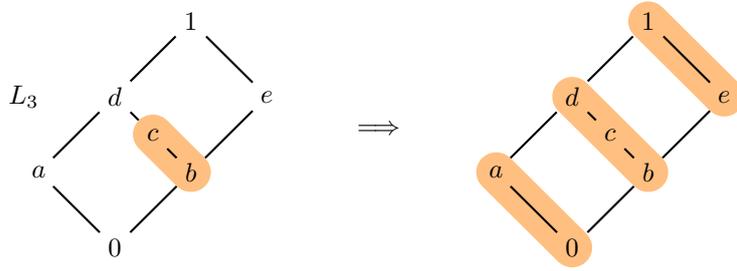
\begin{figure}[H]
\begin{center}
\begin{tabular}{c@{\hspace{1cm}$\Longrightarrow$\hspace{1cm}}c}
\begin{tikzpicture}%
[baseline=(c),
arete/.style={thick},
vertex/.style={inner sep=3pt},
eq_class/.style={line join=round,line width=15pt,color=orange,draw opacity=0.5,line cap=round}]

\def\width{1}
\def\height{1}

\coordinate (0) at (0,0);
\coordinate (a) at (-\width,\height);
\coordinate (b) at (\width,\height);
\coordinate (c) at  (\width/2,1.5*\height);
\coordinate (d) at  (0,2*\height);
\coordinate (e) at  (2*\width,2*\height);
\coordinate (1) at  (\width,3*\height);

\draw[eq_class] (b) -- (c);

\node (L3) at (-1.2*\width,2*\height) {$L_3$};

\node[vertex] (v0) at (0) {$0$};
\node[vertex] (va) at (a) {$a$};
\node[vertex] (vb) at (b) {$b$};
\node[vertex] (vc) at (c) {$c$};
\node[vertex] (vd) at (d) {$d$};
\node[vertex] (ve) at (e) {$e$};
\node[vertex] (v1) at (1) {$1$};

\draw[arete] (vd) -- (v1) -- (ve) -- (vb) -- (vc) -- (vd) -- (va) -- (v0) -- (vb);

\end{tikzpicture}
&
\begin{tikzpicture}%
[baseline=(c),
arete/.style={thick},
vertex/.style={inner sep=3pt},
eq_class/.style={line join=round,line width=15pt,color=orange,draw opacity=0.5,line cap=round}]

\def\width{1}
\def\height{1}

\coordinate (0) at (0,0);
\coordinate (a) at (-\width,\height);
\coordinate (b) at (\width,\height);
\coordinate (c) at  (\width/2,1.5*\height);
\coordinate (d) at  (0,2*\height);
\coordinate (e) at  (2*\width,2*\height);
\coordinate (1) at  (\width,3*\height);

\draw[eq_class] (0) -- (a);
\draw[eq_class] (b) -- (d);
\draw[eq_class] (e) -- (1);

\node[vertex] (v0) at (0) {$0$};
\node[vertex] (va) at (a) {$a$};
\node[vertex] (vb) at (b) {$b$};
\node[vertex] (vc) at (c) {$c$};
\node[vertex] (vd) at (d) {$d$};
\node[vertex] (ve) at (e) {$e$};
\node[vertex] (v1) at (1) {$1$};

\draw[arete] (vd) -- (v1) -- (ve) -- (vb) -- (vc) -- (vd) -- (va) -- (v0) -- (vb);

\end{tikzpicture}
\end{tabular}
\end{center}
\caption{The Hasse diagram of the lattice $L_3$ which is not congruence normal and the equivalence classes of $\con_\vee(c)=\con_\wedge(c)$.}
\label{fig:L3_non_cn}
\end{figure}

\noindent
Following Definition~\ref{def:lat_cong}, setting $b\equiv c$ forces the lattice to project onto a three-element chain.
By order-reversing symmetry, the smallest congruence such that $c\equiv d$ is the same as the smallest congruence such that $b\equiv c$.
Since $c$ is also meet-irreducible, we get $\con_\wedge(c)=\con_\vee(c)$ and since $c\leq c$, this lattice is not congruence normal.
\end{example}

This example complements Reading's example of forcing of polygons nicely, see e.g.~\cite[Example~9-6.6]{reading_lattice_2016} and the exercise on congruence normality of polygonal lattices \cite[Exercice 9.55]{reading_lattice_2016}.
The intervals $[0,d]$ and $[b,1]$ intersect on more than one cover and removing~$c$ from~$L_3$ makes it congruence normal.
Unfortunately, such local obstructions may not be used on lattices of regions of a hyperplane arrangement.
The corresponding Hasse diagrams are isomorphic to the 1-skeleta of the associated zonotopes, and two polygons as in the example may not intersect along more than one cover relation for convexity reasons.
As we shall see in Example~\ref{ex:A10_60}, there are non-congruence normal lattices of regions.

\subsection{Congruence uniformity}
\label{ssec:cong_unif}

A lattice is \defn{join-semidistributive} if for $x,y,z \in L$, 
\[
x \vee y = x \vee z \text{ implies } x \vee (y \wedge z) = x \vee y.
\]
It is \defn{meet-semidistributive} if
\[
x \wedge y = x \wedge z \text{ implies }x \wedge (y \vee z) = x \wedge y.
\]
A lattice that is both join-semidistributive and meet-semidistributive is called \defn{semidistributive}.

\begin{definition}[Congruence uniformity, {\cite[Definition~4.1]{day_characterizations_1979}}]
Let $L$ be a finite lattice.
If the maps $\con_\vee$ and $\con_\wedge$ are injective, then $L$ is called \defn{congruence uniform}.
\end{definition}

Congruence uniformity describes the lattice of congruences of the involved lattice through the map $\con_\vee$.
If $L$ is a finite congruence uniform lattice, then the map $\con_\vee$ gives a order-preserving bijection between $L_\vee$ and $\Con(L)_\vee$.
Lemma~\ref{lem:fund_thm_dist} then permits to study the whole of $\Con(L)$.
Congruence uniformity is a stronger condition than congruence normality in that it should be obtained from a one-element lattice by a sequence of \emph{doublings of intervals} \cite[Theorem~5.1]{day_characterizations_1979}.

\begin{theorem}[{\cite[Section~2]{day_congruence_1994}}]
A finite lattice is congruence uniform if and only if it is both congruence 
normal and semidistributive.
\end{theorem}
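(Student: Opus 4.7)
The strategy is to use Alan Day's characterizations of these two lattice classes via doublings and to bridge them through semidistributivity. Recall that for a subset $C$ of a finite lattice $M$, the \emph{doubling} $M[C]$ replaces $C$ by two stacked copies of itself while leaving the order unchanged elsewhere. Day showed that a finite lattice is congruence uniform exactly when it arises from the one-element lattice through a finite sequence of doublings of \emph{intervals}, and congruence normal exactly when it arises through a sequence of doublings of \emph{convex subsets}. These two characterizations are the bookends of the proof.

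For the forward direction, suppose $L$ is congruence uniform. Since every interval is convex, a witnessing interval-doubling sequence is automatically a convex-doubling sequence, which yields congruence normality immediately. For semidistributivity, I would induct on the length of the doubling sequence: the one-element lattice is trivially semidistributive, and the inductive step reduces to checking that doubling a semidistributive lattice $M$ along an interval $I$ produces a semidistributive $M[I]$. This is a case analysis on whether the elements involved lie strictly below $I$, inside one of its two copies, or strictly above $I$; the fact that an interval has a unique minimum and a unique maximum is what prevents joins and meets in $M[I]$ from colliding between the two copies of $I$.

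For the reverse direction, assume $L$ is finite, congruence normal, and semidistributive. By Day's characterization of congruence normality, write $L = L_n$ at the end of a convex-doubling chain $\mathbf{1} = L_0 \to L_1 \to \cdots \to L_n = L$. The plan is to upgrade every step to an interval doubling, so as to conclude congruence uniformity from Day's other characterization. The key claim to prove is: whenever $M[C]$ is semidistributive for a convex subset $C \subseteq M$, the set $C$ must in fact be an interval of $M$. Supposing for contradiction that $C$ has two incomparable minimal elements $c_1,c_2$, I would use the lower copies of $c_1,c_2$ in $M[C]$ together with a representative of $c_1 \vee_M c_2$ to build a triple $x,y,z \in M[C]$ satisfying $x\vee y = x\vee z$ while $x\vee(y\wedge z) < x\vee y$, contradicting join-semidistributivity; the dual picture, with $C$ having two incomparable maximal elements, breaks meet-semidistributivity.

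The hardest step will be this key claim: producing the explicit witnesses $(x,y,z)$ in $M[C]$ and verifying the required join and meet relations using the two-layer structure of the doubled lattice. A related subtlety in the induction is that each intermediate lattice $L_i$ is a lattice quotient of $L_n$ (obtained by collapsing the two copies of the doubled set) rather than a sublattice, so one must transport semidistributivity through these specific quotients in order to apply the key claim at each step. Handling this descent carefully, and confirming that the doubling-type quotients at hand do preserve semidistributivity, is where the bulk of the work lies.
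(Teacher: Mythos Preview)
The paper does not prove this theorem; it is stated with a citation to Day's 1994 paper and no proof is given. So your proposal must be assessed on its own merits.

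Your forward direction is sound. Interval doublings are convex doublings, giving congruence uniformity $\Rightarrow$ congruence normality, and interval doubling preserves semidistributivity by a routine induction.

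The reverse direction, however, rests on a false key claim. You assert that whenever $M[C]$ is semidistributive for a convex subset $C\subseteq M$, the set $C$ must be an interval of $M$. Here is a counterexample. Let $M=\{0,a,b,1\}$ be the four-element Boolean lattice and $C=\{a,b\}$, an antichain (convex, with two minimal and two maximal elements, hence not an interval). Then $M[C]$ is the six-element lattice with bottom $0$, top $1$, and two incomparable chains $(a,0)<(a,1)$ and $(b,0)<(b,1)$ between them. A direct check of all triples shows $M[C]$ is semidistributive; indeed it is congruence uniform, since one can also build it from $M$ by first doubling at the singleton interval $\{a\}$ and then at $\{b\}$. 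Your proposed witnesses to a failure of join-semidistributivity cannot be produced here, because there is no failure to witness.

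The structural point is that the same lattice admits many doubling sequences. Even when $L$ is congruence uniform---so that \emph{some} interval-doubling sequence exists---a given convex-doubling sequence witnessing congruence normality may use non-interval convex sets at intermediate steps, as the example shows. Upgrading a fixed sequence step by step therefore cannot work, and the difficulty you flag about descending semidistributivity to the intermediate $L_i$ is a symptom of the same problem. A correct argument must either construct an interval-doubling sequence by other means or establish injectivity of $\con_\vee$ directly from the internal structure of $L$, using the bijection $\kappa\colon L_\vee\to L_\wedge$ that semidistributivity provides together with the congruence-normality hypothesis.
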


\begin{corollary}
\label{cor:norm_unif}
A tight poset of regions $\PosBA$ is congruence normal if and only if it is congruence uniform.
\end{corollary}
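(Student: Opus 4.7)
The plan is to observe that this corollary is essentially a direct combination of the immediately preceding theorem (characterizing congruence uniformity as congruence normality plus semidistributivity) with the characterization of semidistributivity for posets of regions that was recalled earlier in the section.

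The forward implication is immediate: if $\PosBA$ is congruence uniform, then by the preceding theorem it is in particular congruence normal, and no hypothesis on tightness is needed.

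For the converse, I would argue as follows. First, since $\PosBA$ is assumed tight, the discussion at the end of Section~\ref{ssec:poset_region} (citing \cite[Theorem~9-3.8]{reading_lattice_2016}) tells us that $\PosBA$ is semidistributive. Tightness also guarantees via Lemma~\ref{lem:bez}(1) that $\PosBA$ is indeed a lattice, so that the notions of congruence normality, semidistributivity, and congruence uniformity all apply in a well-defined way. Assuming additionally that $\PosBA$ is congruence normal, the preceding theorem yields congruence uniformity.

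There is essentially no obstacle: both directions reduce to citing previously stated results. The only thing to verify is that tightness of $\PosBA$ feeds correctly into \cite[Theorem~9-3.8]{reading_lattice_2016} so that semidistributivity is available; this is precisely the content of that theorem, so the corollary follows at once.
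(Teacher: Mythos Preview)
Your proposal is correct and follows essentially the same approach as the paper: both invoke Lemma~\ref{lem:bez} to ensure $\PosBA$ is a lattice, use \cite[Theorem~9-3.8]{reading_lattice_2016} to convert tightness into semidistributivity, and then apply Day's theorem (congruence uniform $\Leftrightarrow$ congruence normal $+$ semidistributive). The paper's proof is slightly more compressed, but the logical content is identical.
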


\begin{proof}
By Lemma~\ref{lem:bez}, the poset of regions $\PosBA$ of a $\A$ is a finite lattice, independent of the choice of base region $B$.
Furthermore, $\A$ is tight with respect to $B$ if and only if $\PosBA$ is semidistributive \cite[Theorem~9-3.8]{reading_lattice_2016}.
\end{proof}

\begin{remark}
\hfill
\begin{enumerate}
\item Since lattices of regions are self-dual, it suffices to verify the injectivity of $\con_\vee$ to determine whether they are congruence uniform. 
\item Semidistributivity can be described using sublattice avoidance~\cite[Theorem~3-1.4]{adaricheva_classes_2016}. 
The six sublattices obstructing semidistributivity are illustrated in Figure~\ref{fig:L3_non_cn} and~\ref{fig:forbidden_sublattices}.
Four out of the six non-semidistributive lattices are not congruence normal ($L_3$, $L_4$, $L_5$, and $M_3$) and share the property that two polygons share more than 1 cover.
Nevertheless, semidistributivity is neither necessary nor sufficient to obtain congruence normality: $L_1$ and $L_2$ are congruence normal but not semidistributive and Example~\ref{ex:A10_60} gives a poset of regions which is semidistributive but not congruence normal.

\begin{figure}[H]
\begin{tikzpicture}
\def\width{6}
\def\height{0}

\node at (-\width/2,\height) {\begin{tikzpicture}%
[baseline=(c),
arete/.style={thick},
point/.style={circle,fill=black,inner sep=1.5pt}]

\def\width{0.8}
\def\height{0.8}

\coordinate (0) at (      0,         0);
\coordinate (a) at (-\width,   \height);
\coordinate (b) at ( \width,   \height);
\coordinate (c) at (-\width, 2*\height);
\coordinate (d) at (      0, 2*\height);
\coordinate (e) at ( \width, 2*\height);
\coordinate (1) at (      0, 3*\height);

\node at (0,-1/2*\height) {$L_2$};

\node[point] (v0) at (0) {};
\node[point] (va) at (a) {};
\node[point] (vb) at (b) {};
\node[point] (vc) at (c) {};
\node[point] (vd) at (d) {};
\node[point] (ve) at (e) {};
\node[point] (v1) at (1) {};

\draw[arete] (v1) -- (vc) -- (va) -- (v0) -- (vb) -- (ve) -- (v1) -- (vd) -- (va) ;
\draw[arete] (vb) -- (vd);
\end{tikzpicture}};
\node at ( \width/2,\height) {\begin{tikzpicture}%
[baseline=(c),
arete/.style={thick},
point/.style={circle,fill=black,inner sep=1.5pt}]

\def\width{1/2}
\def\height{1/2}

\coordinate (0) at (        0,        0);
\coordinate (a) at (-2*\width,2*\height);
\coordinate (b) at ( 2*\width,2*\height);
\coordinate (c) at (        0,2*\height);
\coordinate (d) at (  -\width,  \height);
\coordinate (1) at (        0,4*\height);

\node at (0,-1/2) {$L_5$};

\node[point] (v0) at (0) {};
\node[point] (va) at (a) {};
\node[point] (vb) at (b) {};
\node[point] (vc) at (c) {};
\node[point] (vd) at (d) {};
\node[point] (v1) at (1) {};

\draw[arete] (v0) -- (va) -- (v1)  -- (vb) -- (v0);
\draw[arete] (vd) -- (vc) -- (v1);
\end{tikzpicture}};
\node at (  -\width,      0) {\begin{tikzpicture}%
[baseline=(c),
arete/.style={thick},
point/.style={circle,fill=black,inner sep=1.5pt}]

\def\width{0.8}
\def\height{0.8}

\coordinate (0) at (      0,         0);
\coordinate (a) at (-\width,   \height);
\coordinate (b) at (      0,   \height);
\coordinate (c) at ( \width,   \height);
\coordinate (d) at (-\width, 2*\height);
\coordinate (e) at ( \width, 2*\height);
\coordinate (1) at (      0, 3*\height);

\node at (0,-1/2*\height) {$L_1$};

\node[point] (v0) at (0) {};
\node[point] (va) at (a) {};
\node[point] (vb) at (b) {};
\node[point] (vc) at (c) {};
\node[point] (vd) at (d) {};
\node[point] (ve) at (e) {};
\node[point] (v1) at (1) {};

\draw[arete] (v0) -- (va) -- (vd) -- (v1) -- (ve) -- (vc) -- (v0) -- (vb) -- (vd) ;
\draw[arete] (vb) -- (ve);
\end{tikzpicture}};
\node at (        0,      0) {\begin{tikzpicture}%
[baseline=(c),
arete/.style={thick},
point/.style={circle,fill=black,inner sep=1.5pt}]

\def\width{1/2}
\def\height{1/2}

\coordinate (0) at (        0,        0);
\coordinate (a) at (-2*\width,2*\height);
\coordinate (b) at ( 2*\width,2*\height);
\coordinate (c) at (        0,2*\height);
\coordinate (d) at (  -\width,3*\height);
\coordinate (1) at (        0,4*\height);

\node at (0,-1/2) {$L_4$};

\node[point] (v0) at (0) {};
\node[point] (va) at (a) {};
\node[point] (vb) at (b) {};
\node[point] (vc) at (c) {};
\node[point] (vd) at (d) {};
\node[point] (v1) at (1) {};

\draw[arete] (v0) -- (va) -- (vd) -- (v1)  -- (vb) -- (v0) -- (vc) -- (vd);
\end{tikzpicture}};
\node at (   \width,      0) {\begin{tikzpicture}%
[baseline=(c),
arete/.style={thick},
point/.style={circle,fill=black,inner sep=1.5pt}]

\def\width{1/2}
\def\height{1/2}

\coordinate (0) at (        0,        0);
\coordinate (a) at (-2*\width,2*\height);
\coordinate (b) at ( 2*\width,2*\height);
\coordinate (c) at (        0,2*\height);
\coordinate (1) at (        0,4*\height);

\node at (0,-1/2) {$M_3$};

\node[point] (v0) at (0) {};
\node[point] (va) at (a) {};
\node[point] (vb) at (b) {};
\node[point] (vc) at (c) {};
\node[point] (v1) at (1) {};

\draw[arete] (v0) -- (va) -- (v1)  -- (vb) -- (v0) -- (vc) -- (v1);
\end{tikzpicture}};
\end{tikzpicture}
\caption{Five of the six sublattices that obstruct semidistributivity, the sixth is $L_3$ illustrated in Figure~\ref{fig:L3_non_cn}}
\label{fig:forbidden_sublattices}
\end{figure}
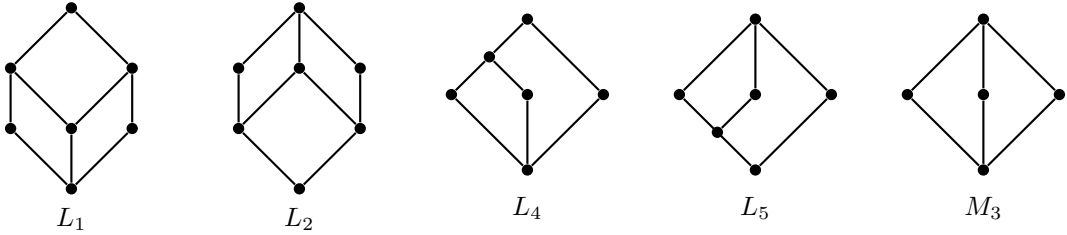
\item Considering two polygons in a polygonal lattice, and verifying congruence normality as in Example~\ref{ex:L3_non_cn}, one realizes that $M_3$, $L_3$, $L_4$, and $L_5$ should be avoided.
For poset of regions, this comes as no surprise as polygonality, tightness and semidistributivity are equivalent \cite[Theorem~9-3.8 and~9-6.10]{reading_lattice_2016}. 
In general, what is the relation between polygonal and semidistributive lattices?
\end{enumerate}
\end{remark}

\begin{example}[{\cite[Figure~5]{reading_lattice_2003} and \cite[Exercise~9.69]{reading_lattice_2016}}]
\label{ex:A10_60}
Figure \ref{fig:A10_60} illustrates the stereographic projection of the simplicial hyperplane arrangement $\A(10,60)_3$ in $\R^3$ with 10 hyperplanes through the intersection of 5 hyperplanes which are mapped to lines.
This arrangement is $\mathcal{A}(10,1)$ in Gr\"unbaum's list \cite[p.2-3]{grunbaum_2009}, see Section~\ref{sec:simplicial}. 

\begin{figure}[H]
\begin{center}
\includegraphics[width=3.5in]{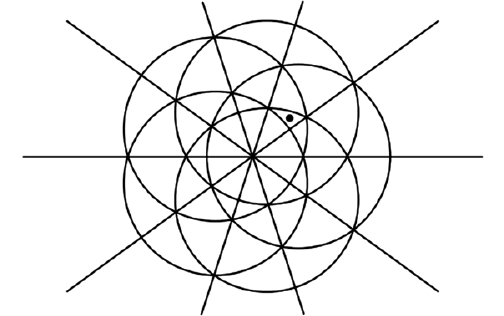}
\end{center}
\caption{The simplicial hyperplane arrangement $\A(10,60)_3=\F_2(10)$ whose lattice of regions with the marked base region is not congruence normal}
\label{fig:A10_60}
\end{figure}

\noindent
The lattice of regions with respect to the base region marked by a black dot is thus semidistributive.
In Example~\ref{ex:cycle_in_shards}, we use shards to demonstrate that this arrangement is not congruence normal, hence not uniform by Corollary~\ref{cor:norm_unif}.
It is the smallest known simplicial hyperplane arrangement of rank three with that property.
\end{example}

Examples~\ref{ex:L3_non_cn} and~\ref{ex:A10_60} illustrate failures to be congruence normal.
Example~\ref{ex:A10_60} is particularly interesting in that it does not fail to be congruence normal because of forbidden sublattices blocking semidistributivity.

\subsection{Congruence normality of simplicial hyperplane arrangements through shards}
\label{ssec:cong_shards}

Reading characterized congruence uniformity of posets of regions via two conditions, the first one is tightness and the second is phrased using pieces of hyperplanes called \emph{shards}.
When the arrangement is central, these pieces are polyhedral cones defined through certain subarrangements.

\begin{definition}[{Rank-2 subarrangements and their basic hyperplanes, see~\cite[Definition~9-7.1]{reading_lattice_2016}}]
Let $\A$ be a hyperplane arrangement with base region $B$, and let $1\leq i<j\leq m$.
The set 
\[
\A|_{i,j}:=\{H\in \A~:~H\supset (H_i\cap H_j)\}
\]
is called a \defn{rank-2 subarrangement} of $\A$.
The two facet-defining hyperplanes of the region of~$\A|_{i,j}$ that contains $B$ are called the \defn{basic} hyperplanes of $\A|_{i,j}$.
\end{definition}

\begin{definition}[{Shards, see~\cite[Definition~9-7.2]{reading_lattice_2016}}]
\label{def:shards}
Let $H_i\in\A$ and set
\[
\pre(H_i):=\{H_k\in\A~:~H_k \text{ is basic in }\A|_{i,k}\quad \text{ and }\quad H_i \text{ is not basic in }\A|_{i,k}\}.
\]
The restriction of $\pre(H_i)$ to the hyperplane $H_i$ breaks $H_i$ into closed regions called \defn{shards}.
We denote shards by capital Greek letters such as $\Sigma,\Theta,\Upsilon,$ etc.
The hyperplane of $\A$ that contains a shard~$\Sigma$ is denoted by $H_\Sigma$.
We write $\Sigma^i$ to indicate that it is contained in $H_i$.
Hyperplanes in $\pre(H_i)$ are said to \defn{cut} the hyperplane $H_i$.
\end{definition}

\begin{example}[Example~\ref{ex:A10_60} continued]
\label{ex:shards}
Figure \ref{fig:shards} illustrates the 29 shards obtained from the base region marked with a dot.

\begin{figure}[H]
\begin{center}
\begin{tikzpicture}
\node at (0,0) {\includegraphics[width=4in]{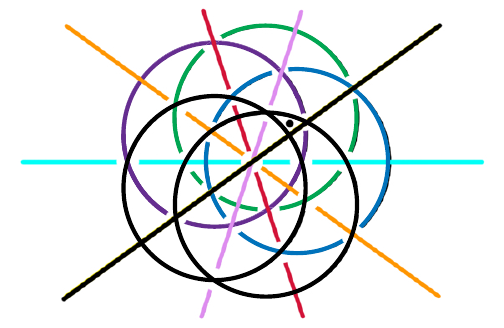}};
\node at (-2.9,-.8) {$H_1$};
\node at (1.9,-2.3) {$H_2$};
\node at (-.7,-3) {$H_3$};
\node at (-3.4,-2.9) {$H_4$};
\node at (-2.9,1.1) {$H_5$};
\node at (1.3,-3) {$H_6$};
\node at (4.6,.3) {$H_7$};
\node at (3.8,-2.3) {$H_8$};
\node at (2.9,-.8) {$H_9$};
\node at (1.8,2.5) {$H_{10}$};
\node at (.7,3) {$\Sigma$};
\node at (-1.3,-3) {$\Sigma$};
\node at (-1.3,3) {$\Sigma'$};
\node at (.6,-3) {$\Sigma''$};
\end{tikzpicture}
\end{center}
\caption{The shards of the simplicial hyperplane arrangement $\A(10,60)_3=\F_2(10)$ with respect to the dotted region}
\label{fig:shards}
\end{figure}
\noindent
On the one hand, due to the particular choice of projection, it is necessary to distinguish whether two unbounded straight line-segments lying on a common line form 1 or 2 shards.
For example, the unbounded line-segments on the line labeled $H_3$ form one shard $\Sigma$, and the unbounded line-segments on the line labeled $H_6$ form 2 distinct shards $\Sigma'$ and $\Sigma''$.
On the other hand, it is possible to solve this by changing the projection to obtain only circles, though simultaneously losing symmetry.
\end{example}

The following directed graph records the cutting relation among hyperplanes.

\begin{definition}[{Directed graph $\H_B(\A)$ \cite[Section~3]{reading_lattice_2004}}]
\label{def:directed_graph}
Let $\H_B(\A)$ be the directed graph whose vertices are the hyperplanes of the 
arrangement $\A$, and whose oriented edges are such that
\begin{center}
$H_i \rightarrow H_j$\qquad if and only if\qquad $H_i \in \pre(H_j)$.
\end{center}
\end{definition}

The following directed graph keeps track of the cutting relation along with the ``geometric proximity'' between shards.

\begin{definition}[Shard digraph, see {\cite[Section~3]{reading_lattice_2004}\cite[Definition~9.7.16]{reading_lattice_2016}}]
\label{def:shard_dig}
Let $\Sh_B(\A)$ be the directed graph on the shards of $\A$ 
such that
\[
\Sigma^i \rightarrow \Sigma^j \quad \text{if and only if } \quad 
\begin{array}{l}
\bullet\hspace{5pt} H_{\Sigma^i} \rightarrow H_{\Sigma^j}\text{ in } \H_B(\A) \text{ and} \\[0.25em]
\bullet\hspace{5pt} \Sigma^i \cap \Sigma^j \text{ has dimension $d-2$.}
\end{array}
\]
\end{definition}

The following theorem gives a characterization of congruence uniformity in terms of the directed graph on shards.

\begin{theorem}[{\cite[Corollary~9-7.22]{reading_lattice_2016}}]
\label{thm:charac_uniform}
Let $\A$ be a hyperplane arrangement with a base region~$B$.
The poset of regions $\PosBA$ is a congruence uniform lattice if and only if $\A$ is tight with respect to~$B$ and $\Sh_B(\A)$ is acyclic.
In this case, $\Sh_B(\A)$ is isomorphic to the Hasse diagram of $\Con(\PosBA)_\vee$. 
\end{theorem}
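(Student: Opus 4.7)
The plan is to package the theorem as two logically independent reductions, each relying on a separate identification from \cite[Chapter~9]{reading_lattice_2016}. The overall flow is: congruence uniformity splits into semidistributivity plus congruence normality, semidistributivity translates to tightness, and congruence normality translates to acyclicity of a digraph built on the combinatorial shadow of the arrangement, namely $\Sh_B(\A)$. The characterization of the Hasse diagram of $\Con(\PosBA)_\vee$ then falls out of the bijection used in the congruence-normality step.

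For the first reduction, I would invoke the theorem recalled just above Corollary~\ref{cor:norm_unif}: a finite lattice is congruence uniform iff it is simultaneously congruence normal and semidistributive. Since $\PosBA$ is semidistributive iff $\A$ is tight with respect to $B$ \cite[Theorem~9-3.8]{reading_lattice_2016}, tightness is forced by congruence uniformity. Conversely, assuming tightness, congruence uniformity reduces to congruence normality, exactly as in Corollary~\ref{cor:norm_unif}. So it suffices to show, under the standing assumption of tightness, that $\PosBA$ is congruence normal iff the shard digraph $\Sh_B(\A)$ is acyclic.

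For the second reduction, I would set up the standard shard-theoretic dictionary. A region $R$ is join-irreducible in $\PosBA$ exactly when it covers a unique region $R_\bullet$, in which case $\Sep_B(R)\setminus\Sep_B(R_\bullet)$ consists of a single hyperplane $H_i$ whose separating facet of $R_\bullet$ lies in a well-defined shard $\Sigma(j)\subseteq H_i$. In the tight case this gives a bijection between $L_\vee$ and shards; a dual assignment sends meet-irreducibles $m\in L_\wedge$ to shards $\Sigma(m)$. Using tightness in the guise of polygonality \cite[Theorem~9-6.10]{reading_lattice_2016}, the forcing relation between join-irreducibles is generated by the local moves coming from codimension-$2$ interactions between two shards $\Sigma^i, \Sigma^j$ whose hyperplanes stand in the relation $H_i\in\pre(H_j)$; these are precisely the arrows of $\Sh_B(\A)$. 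Consequently, the preorder of forcing on $L_\vee$ is the transitive closure of $\Sh_B(\A)$, and its equivalence classes correspond to strongly connected components. From here, $\con_\vee$ is injective iff every such component is a singleton iff $\Sh_B(\A)$ is acyclic. Finally, the congruence-normality reformulation $j\leq m\Rightarrow \con_\vee(j)\neq \con_\wedge(m)$ translates into the non-existence of a pair $(j,m)$ with $\Sigma(j)=\Sigma(m)$ that would close a cycle, which is again equivalent to acyclicity. In the acyclic case, the Hasse-diagram identification $\Sh_B(\A)\cong \Con(\PosBA)_\vee$ is immediate from the bijection $j\mapsto \Sigma(j)$ combined with the fact that $\con_\vee$ takes cover relations to cover relations on join-irreducible congruences.

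The main obstacle is the polygonality step: one must verify that polygonality rules out any forcing between join-irreducibles that is not visible as a chain of arrows in $\Sh_B(\A)$, and that, conversely, each $\Sh_B(\A)$-arrow does correspond to a genuine forcing at a codimension-$2$ polygonal face. This is where the geometry of tight arrangements, rather than any purely lattice-theoretic argument, is indispensable, and it is the place where one leans most heavily on the machinery of \cite[\S\S 9\text{-}6, 9\text{-}7]{reading_lattice_2016}.
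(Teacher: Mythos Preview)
The paper does not give a proof of this theorem: it is quoted verbatim as \cite[Corollary~9-7.22]{reading_lattice_2016} and used as a black box. So there is no ``paper's own proof'' to compare your proposal to. Your outline is faithful to the architecture that the paper summarizes in Section~\ref{sec:prelim}: congruence uniformity $\Leftrightarrow$ congruence normality $+$ semidistributivity (Day), semidistributivity $\Leftrightarrow$ tightness \cite[Theorem~9-3.8]{reading_lattice_2016}, and then the shard-theoretic translation of forcing among join-irreducibles via polygonality \cite[Theorem~9-6.10]{reading_lattice_2016}. That is exactly the route Reading takes, and your identification of the delicate point---that every forcing between join-irreducibles is visible as a path in $\Sh_B(\A)$, and conversely---is the right place to flag as the substantive step. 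If you want to turn this into a self-contained proof rather than a road map, that step needs the explicit bijection between shards and join-irreducibles \cite[Proposition~9-7.8]{reading_lattice_2016} together with the polygonal description of forcing \cite[\S9-6, \S9-7]{reading_lattice_2016}; otherwise the sketch is accurate.
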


By Corollary~\ref{cor:norm_unif}, the theorem implies that acyclicity of the directed graph on shards~$\Sh_B(\A)$ characterizes the normality and uniformity of tight posets of regions $\PosBA$.

\begin{example}[{Example~\ref{ex:shards} continued}]
\label{ex:cycle_in_shards}
Let $\Sigma^6,\Theta^{10},\Upsilon^8$, and $\Xi^9$ be the shards illustrated in Figure \ref{fig:cycle_in_shards}.
The directed graph on shards contains the cycle $\Sigma^6 \rightarrow \Theta^{10} \rightarrow \Upsilon^8 \rightarrow \Xi^9 \rightarrow \Sigma^6$. 
Thus, for this choice of base region, the lattice of regions is not congruence normal.
\end{example}

\begin{figure}[H]
\begin{center}
\begin{tikzpicture}
\node at (0,0) {\includegraphics[width=4in]{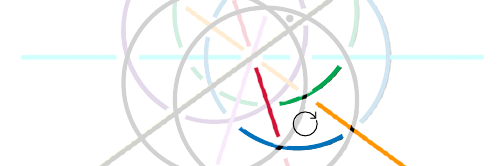}};
\node at (-.2,0) {$\Sigma^6$};
\node at (-.4,-1.2) {$\Xi^9$};
\node at (2.2,.2) {$\Theta^{10}$};
\node at (2.7, -1) {$\Upsilon^8$};
\end{tikzpicture}
\end{center}
\caption{A cycle in the shards of the simplicial hyperplane arrangement from Example~\ref{ex:A10_60}.}
\label{fig:cycle_in_shards}
\end{figure}

\section{Congruence normality through restricted covectors}
\label{sec:cong_norm}

In this section, we recast shards as certain restricted covectors---which we call \emph{shard covectors}---in the point configuration dual to the arrangement~$\A$.
We then describe how to detect cycles in $\Sh_B(\A)$ using shard covectors. 
This reduces the verification of congruence normality for tight posets of regions to its simplest combinatorial expression, one that does not require the entire poset nor the usage of polyhedral objects.
Furthermore, it is possible to express an obstruction to congruence normality for tight hyperplane arrangements.

In Section~\ref{ssec:restrict_inter}, we introduce restricted covectors and the intersection operation.
In Section~\ref{ssec:aff_point}, we define affine point configurations and their lines.
In Section~\ref{ssec:shards_as_cov}, we interpret shards as covectors.
In Section~\ref{ssec:forcing_on_cov}, we translate the forcing relation on shards into the language of covectors.
Finally, in Section~\ref{ssec:obstruction} we describe examples of obstructions to congruence normality in terms of restricted covectors.

\subsection{Restricted covectors and the intersection operation}
\label{ssec:restrict_inter}

For standard references on covectors and oriented matroids, we refer the reader to the books \cite{oriented_matroids,de_loera_triangulations_2010}.

\begin{definition}[Covector and restricted covector]
Let $\P=\{\p_i\}_{i\in[m]}$ be an ordered set of vectors in $\R^d$.
A \defn{covector} on $\P$ is a vector of signs $(\cov_i)_{i\in[m]}\in \{0,+,-\}^m$ defined as
\[
\cov:=(\sign(\ce\cdot \p_i + a))_{i\in[m]},
\]
where $\ce\in\R^d$ and $a\in\R$.
Given a subset $\mathbf{U} \subseteq \P$ and a covector $\cov$ on $\P$, the \defn{restricted covector}~$\cov|_{\mathbf{U}}$ with respect to $\mathbf{U}$ is equal to $\cov$ on the entries $\{j~:~\p_j\in \mathbf{U}\}$ and contains a ``$*$'' symbol in every other entry.
\end{definition}

Intuitively, a restricted covector ``forgets'' about certain hyperplanes while keeping them encoded. 
Similarly, reversing the roles of $\ce$ and $\p_i$ above, covectors may be thought of as \emph{sign evaluations} of a certain vector $\x$ with respect to a set of vectors:

\begin{definition}[Sign evaluation of a vector]
Let $\P=\{ \p_i\}_{i\in[m]}$ be an ordered set of vectors in $\R^d$ and $\x\in\R^d$.
The \defn{sign evaluation} of $\x$ with respect to $\P$ is the covector
\[
\cov_\P(\x):=\left(\sign( \p_i \cdot \x)\right)_{i\in[m]}.
\]
\end{definition}

Inspired by the composition operation on \emph{vectors} (i.e.\ affine dependences) of oriented matroids \cite[Chapter 3]{oriented_matroids}, we define an intersection operation on restricted covectors. 

\begin{definition}[Intersection of restricted covectors]
The commutative \defn{intersection operation} $\cap$ from \newline
${\{0,+,-,*\}\times\{0,+,-,*\}}$ to $\{0,+,-,*\}$
is defined as
\begin{center}
\begin{tabular}{c@{\hspace{2cm}}c@{\hspace{2cm}}c}
$+\cap+:=+$, & $+\cap-=-\cap +:=0$, & $-\cap-:=-$,\\[0.5em]
$0\cap \varepsilon=\varepsilon\cap 0:=0$, & $*\cap \varepsilon=\varepsilon\cap *:=\varepsilon$, & \\
\end{tabular}
\end{center}
where $\varepsilon\in\{0,+,-,*\}$.
Let $\cov,\covd \in \{0,+,-,* \}^m$ be two restricted covectors, then their \defn{intersection} $\cov\cap \covd$ is the vector of signs $(\cov_i\cap \covd_i)_{i\in[m]}$.
\end{definition}

The vector of signs $(\cov_i\cap \covd_i)_{i\in[m]}$ is not necessarily a covector, though it nevertheless records the information of the sign evaluation of points in an intersection.
It is possible to interpret this intersection operation using subsets of the real numbers.
That is, if one replaces the four symbols $0,+,-,*$ respectively by the sets $\{0\},\R_{\geq0},\R_{\leq0},\R$, and consider their intersections, we get exactly the same results.
The associativity of this operation then follows easily.

\subsection{Affine point configurations and lines}
\label{ssec:aff_point}
We use duality to pass from a hyperplane arrangement $\A$ in~$\R^d$ with a base region $B$ to an acyclic point configuration~$\A_B^*$, see \cite[Section 1.2]{oriented_matroids} for more detail.
Indeed, the normals $\{\n_i \}_{i \in [m]}$ are oriented so that the linear hyperplane orthogonal to $\ve_B\in\inter(B)$ separates them from the base region $B$, i.e.\ $\ve_B\cdot \n_i<0$, for all $i\in[m]$, making the set $\{\n_i\}_{i\in[m]}$ acyclic.

\begin{definition}[Affine point configuration relative to a base region]
Let $\A$ be a hyperplane arrangement in $\R^d$, $B\in\Reg(\A)$, and $\ve_B\in\inter(B)$.
Let 
\[
\mathbb{A}_B := \{\x \in \R^d~:~\ve_B\cdot \x = -1 \},
\]
and associate the point $\p_i:=-\frac{1}{\ve_B\cdot \n_i}\cdot \n_i \in \mathbb A_B\subset \R^d$ to the normal $\n_i$.
The ordered set of vectors $\{\p_i\}_{i\in[m]}$ is the \defn{affine point configuration of $\A$ relative to the base region $B$} and is denoted~$\A_B^*$.
\end{definition}

Choosing a different normal vector $\ve_B\in\inter(B)$ yields an affine point configuration which is projectively equivalent to $\A_B^*$.
Hence, up to projective transformation, this construction does not depend on the choice of $\ve_B$.

\begin{definition}[Lines of a point configuration, $\Lines(\P)$]
Let $\P=\{\p_i\}_{i\in[m]}$ be an ordered set of vectors in~$\R^d$ .
A subset of $\P$ consisting of all the points that lie on the affine hull of two distinct points of $\P$ is called a \defn{line}.
The set of lines of $\P$ is denoted by $\Lines(\P)$.
\end{definition}

\begin{lemma}
\label{lem:lines}
Let $\A$ be a hyperplane arrangement in $\R^d$ with base region $B$, $\ell\in\Lines(\A_B^*)$, and $\p_i$ and~$\p_j$ be the two vertices of the segment $\conv(\ell)$.
\begin{enumerate}[label=\roman{enumi}),ref=\ref{lem:lines}~\roman{enumi})]
\item The lines in $\Lines(\A_B^*)$ are in bijection with the rank-2 subarrangements of $\A$.
\label{lem:lines_i}
\item The hyperplanes $H_i$ and $H_j$ are the basic hyperplanes of the rank-2 subarrangement corresponding to $\ell$.
\label{lem:lines_ii}
\end{enumerate}
\end{lemma}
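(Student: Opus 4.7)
The plan is to translate both statements through the radial projection $\n \mapsto -\n/(\ve_B \cdot \n)$ that defines $\A_B^*$ from the normals, and exploit the fact that this projection sends the 2-plane geometry of the rank-2 subarrangement to the 1-dimensional segment geometry on the line.

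For part (i), I would argue that $H_k \supset H_i \cap H_j$ is equivalent, by taking orthogonal complements, to $\n_k \in \spa(\n_i,\n_j)$. Since $\p_k$ is a positive scalar multiple of $\n_k$ lying on the affine hyperplane $\mathbb{A}_B$, we have $\n_k \in \spa(\n_i,\n_j)$ if and only if $\p_k \in \spa(\n_i,\n_j) \cap \mathbb{A}_B = \aff(\p_i,\p_j)$. Thus the hyperplanes of $\A|_{i,j}$ correspond exactly to the points of $\A_B^*$ on the affine line $\aff(\p_i,\p_j)$, which is a line of $\A_B^*$ in the sense of Definition of $\Lines(\P)$. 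Conversely, any line $\ell$ is determined by any two of its points $\p_i,\p_j$, whose associated hyperplanes define a rank-2 subarrangement $\A|_{i,j}$, and the argument just given shows this assignment is an inverse. This gives the desired bijection.

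For part (ii), fix a line $\ell$ and let $W = \spa(\n_i,\n_j)$, where $\p_i,\p_j$ are the two endpoints of $\conv(\ell)$. By the orientation of the normals, every $\n_k$ with $H_k \in \A|_{i,j}$ lies in the open half-plane $\{\w \in W : \ve_B\cdot \w < 0\}$. The radial projection from the origin onto the affine line $W \cap \mathbb{A}_B$ carries this half-plane bijectively onto a line containing $\ell$; moreover, it sends the convex cone $\con\{\n_k : H_k \in \A|_{i,j}\}$ onto the segment $\conv(\ell)$ and its two extreme rays to the two endpoints of that segment. On the other hand, the region of $\A|_{i,j}$ containing $B$ is, in $W$, the polar dual of $\con\{\n_k : H_k \in \A|_{i,j}\}$, so its two facet-defining normals are precisely the generators of the extreme rays of that cone. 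These facet normals define the basic hyperplanes of $\A|_{i,j}$ by definition, so the endpoints of $\conv(\ell)$ are exactly $\p_i$ and $\p_j$ with $H_i,H_j$ basic, as claimed.

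The only delicate point is the final step: verifying that the radial projection preserves the extremality of rays and that the facet normals of the $B$-containing region in $\A|_{i,j}$ coincide with the extreme rays of $\con\{\n_k\}$. Both reduce to the standard duality between a polyhedral cone and its polar in the 2-plane $W$, so once one records that normals lie strictly on one side of the functional $\ve_B \cdot (-)$, the statement follows. The rest of the argument is bookkeeping on the bijection between lines, rank-2 subarrangements, and their pairs of endpoints.
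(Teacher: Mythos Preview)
Your proof is correct. Part (i) is essentially identical to the paper's argument: both reduce the correspondence to the equivalence $\n_k\in\spa(\n_i,\n_j)\Longleftrightarrow\p_k\in\aff(\p_i,\p_j)$, which follows from orthogonal complements and the fact that the $\p_k$ are positive rescalings of the $\n_k$ lying on $\mathbb{A}_B$.

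For part (ii) you take a somewhat different route. The paper argues directly: it picks a point $\x$ in the relative interior of a facet of $B|_{i,j}$ with facet normal $\p_k$, writes $\p_k=\lambda_k\p_i+(1-\lambda_k)\p_j$ with $\lambda_k\in[0,1]$ (since $\p_i,\p_j$ are the endpoints of $\conv(\ell)$), and then observes that $0=\x\cdot\p_k=\lambda_k(\x\cdot\p_i)+(1-\lambda_k)(\x\cdot\p_j)$ together with $\x\cdot\p_i\le 0$, $\x\cdot\p_j\le 0$ forces $\p_k\in\{\p_i,\p_j\}$. You instead package the same linear algebra as a structural statement: in the $2$-plane $W$, the cone $\con\{\n_k\}$ and the cone $B|_{i,j}\cap W$ are polar, so facet normals of the latter are extreme rays of the former; and the radial projection onto $\mathbb{A}_B$ (which is order-preserving on the open half-plane $\{\ve_B\cdot\,\cdot<0\}$) carries extreme rays to segment endpoints. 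Your approach is slightly more conceptual and makes the two-sided correspondence explicit, while the paper's is a short one-line computation; both amount to the same convexity fact in a $2$-plane.
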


\begin{proof}
\emph{i)} 
Let $\A':=\{H_i~:~i\in\mathcal{I}\}$, for some $\mathcal{I}\subseteq[m]$.
The subarrangement $\A'$ is a rank-2 subarrangement if and only if
\[
\dim\left(\bigcap_{i \in \mathcal I} H_i\right) = d-2\quad\text{and}\quad \dim\left(\bigcap_{i \in \mathcal{I}\cup\{j\}} H_i\right) < d-2, \text{ for every } j \notin \mathcal{I}. 
\]
Equivalently,
\[
\dim\left(\spa(\n_i~:~i \in \mathcal{I})\right)= 2 \quad\text{and}\quad \dim\left(\spa(\{\n_j\}\cup\{\n_i~:~i \in \mathcal{I}\})\right) > 2, \text{ for every } j \notin \mathcal I.
\]
By passing to the affine point configuration in the affine space$~\mathbb A_B$, the above statement is equivalent to $\{\p_i~:~i \in \mathcal{I}\} \in \Lines(\A_B^*)$.
Thus the map sending a rank-2 subarrangement $\A'$ to the line $\{\p_i~:~i \in \mathcal{I} \}$ is a bijection.

\emph{ii)}
Let $B|_{i,j}$ be the region of $\A|_{i,j}$ that contains $B$:
\[
B|_{i,j} = \{\x \in \R^d~:~\p_k \cdot \x \leq 0,\text{ for all } \p_k\in\ell\},
\]
by part \emph{i)}.
Let $\p_k$ be the normal of a facet $F$ of $B|_{i,j}$ and $\x$ be contained in the relative interior of $F$ so that $\p_k\cdot \x =0$.
Since $\p_i$ and $\p_j$ are the vertices of $\conv(\ell)$, we have $\p_k = \lambda_k \p_i +(1-\lambda_k)\p_j$, for some $0 \leq \lambda_k \leq 1$.
Then
\[
0 = \x \cdot \p_k = \lambda_k (\x \cdot \p_i) + (1-\lambda_k)(\x \cdot \p_j).
\]
As $\p_i\cdot\x \leq 0$ and $\p_j\cdot\x\leq 0$, the above equality implies that $\p_k$ must be $\p_i$ or $\p_j$. 
\end{proof}

\subsection{Shards as restricted covectors}
\label{ssec:shards_as_cov}
Let $\A$ be a tight hyperplane arrangement with respect to a base region $B$ and $\A^*_B$ be its associated affine point configuration.
Every shard $\Sigma$ of $\A$ has a corresponding unique join-irreducible region $J_\Sigma$ \cite[Proposition~9-7.8]{reading_lattice_2016}.
In the lattice of regions, $J_\Sigma$ is the meet of all regions $R$ such that
\[
H_\Sigma \in \Sep(R)\quad \text{ and }\quad R \cap \Sigma \text{ has dimension }d-1.
\]
The next lemma shows how $\pre(H_\Sigma)$ and $\Sep(J_\Sigma)$ yield a description of the shard as the intersection of half-spaces.
It is originally stated for simplicial arrangements, though the same holds true for tight hyperplane arrangements.

\begin{lemma}[{see \cite[Lemma 3.7]{reading_lattice_2004}}]
\label{lem:shards_Hdesc}
A shard $\Sigma$ has the following description:
$$
\Sigma = \bigg \{ \x \in H_\Sigma \, \bigg \lvert 
	\begin{array}{l} \n_i \cdot \x \geq 0 \text{  if  } H_i \in \pre(H_\Sigma) \cap \Sep(J_\Sigma) \\
	                 \n_i \cdot \x \leq 0 \text{  if  } H_i \in \pre(H_\Sigma) \setminus \Sep(J_\Sigma)
	 \end{array} \bigg \}.
$$
\end{lemma}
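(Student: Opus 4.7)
The plan is to combine the geometric definition of a shard with the separating-set characterization of $J_\Sigma$ in order to identify the correct half-space inequality for each cutting hyperplane. The inclusion $\Sigma \subseteq H_\Sigma$ is immediate from Definition~\ref{def:shards}, so the substance of the lemma lies in determining the sign of $\n_i \cdot \x$ on $\Sigma$ for each $H_i \in \pre(H_\Sigma)$. For every such $H_i$, the relative interior of $\Sigma$ (as a subset of $H_\Sigma$) avoids $H_i$ by definition of a shard, and being connected, it lies strictly on one side of $H_i$; therefore $\Sigma$ is contained in exactly one of the two closed half-spaces determined by $H_i$. It only remains to determine which side.

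To pin down the sign, I would connect $\Sigma$ to $J_\Sigma$ via their shared facet. Since $J_\Sigma$ is join-irreducible in $\PosBA$, it covers a unique region $j_\bullet$, and the meet characterization of $J_\Sigma$ as $\bigwedge \{R : H_\Sigma \in \Sep(R),\, \dim(R \cap \Sigma) = d-1\}$ forces the unique cover $j_\bullet \lessdot J_\Sigma$ to correspond to crossing $H_\Sigma$. The shared facet $F := J_\Sigma \cap j_\bullet$ then lies in $H_\Sigma$, its relative interior meets no hyperplane of $\pre(H_\Sigma)$, so $F$ is contained in a single shard of $H_\Sigma$, which must be $\Sigma$ by the bijection between shards and join-irreducible regions. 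Picking $\x_0$ in the relative interior of $F \subseteq \Sigma$ and perturbing slightly into $\inter(J_\Sigma)$ yields a point $\y$ with $\sign(\n_i \cdot \y) = \sign(\n_i \cdot \x_0)$, since $\x_0 \notin H_i$; and by the definition of the separating set, $\n_i \cdot \y > 0$ iff $H_i \in \Sep(J_\Sigma)$. Combined with the constancy established in the first step, this proves the inclusion $\subseteq$.

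For the reverse inclusion $\supseteq$, observe that the right-hand side of the lemma is a closed polyhedral cone in $H_\Sigma$ whose relative interior avoids every hyperplane in $\pre(H_\Sigma)$, lies in a single connected component of $H_\Sigma \setminus \bigcup_{H_k \in \pre(H_\Sigma)} H_k$, and, by the sign computation above, shares an interior point with $\Sigma$; hence it is contained in the closed shard $\Sigma$. The main obstacle is justifying cleanly that the unique cover $j_\bullet \lessdot J_\Sigma$ crosses precisely $H_\Sigma$ and that the facet $F$ lies inside $\Sigma$ rather than in a neighboring shard on $H_\Sigma$; this is essentially the content of the shard/join-irreducible bijection and relies on tightness of $\A$ via Lemma~\ref{lem:bez} so that the meet defining $J_\Sigma$ is well-behaved in $\PosBA$.
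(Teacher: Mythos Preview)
The paper does not supply its own proof of this lemma; it simply cites \cite[Lemma~3.7]{reading_lattice_2004} and uses the result as a black box. There is therefore no in-paper argument to compare your proposal against.

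That said, your outline is a reasonable reconstruction of how such a statement is proved. A shard is by definition the closure of a connected component of $H_\Sigma \setminus \bigcup_{H_k\in\pre(H_\Sigma)} H_k$, so it is automatically a full-dimensional polyhedral cone in $H_\Sigma$ bounded by the cutting hyperplanes, and the only issue is to identify the correct side for each $H_i\in\pre(H_\Sigma)$. Your idea of reading the signs off the facet $F=J_\Sigma\cap j_\bullet$ is the right one; the point you flag as the main obstacle---that this facet lies in $\Sigma$ rather than a neighboring shard---is exactly what the defining meet $J_\Sigma=\bigwedge\{R:H_\Sigma\in\Sep(R),\ \dim(R\cap\Sigma)=d-1\}$ guarantees (since $J_\Sigma$ itself is one of the regions in this meet). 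Your reverse inclusion is also fine once you note that the right-hand side is full-dimensional in $H_\Sigma$ (it contains $\Sigma$ by the forward inclusion), so its relative interior satisfies all inequalities strictly and hence lies in a single component. If you want to tighten the write-up, the one place to be more explicit is the sentence ``the meet characterization \dots\ forces the unique cover $j_\bullet\lessdot J_\Sigma$ to correspond to crossing $H_\Sigma$'': this is because $J_\Sigma$ is among the regions being met, so $H_\Sigma\in\Sep(J_\Sigma)$, while any lower cover of $J_\Sigma$ lies below some region in the meet and hence below $J_\Sigma$ itself, forcing $\Sep(j_\bullet)=\Sep(J_\Sigma)\setminus\{H_\Sigma\}$.
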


To interpret shards on a hyperplane $H_i$ as covectors, we restrict to a certain subconfiguration containing $\p_i$.

\begin{definition}[Subconfiguration localized at a point]
Let $\p_i\in\A_B^*$.
The \defn{subconfiguration~$\A_{B,i}^*$ of $\A_B^*$ localized at $\p_i$} contains $\p_i$ and the vertices of the convex hulls of lines of $\A_B^*$ that contain~$\p_i$ in their interior. 
\end{definition}

Lemma~\ref{lem:lines_ii} and Definition~\ref{def:directed_graph} imply the following lemma.

\begin{lemma}
\label{lem:subconf}
The subconfiguration $\A_{B,i}^*$ satisfies
\[
\A_{B,i}^*=\{\p_i\}\cup\{\p_j~:~H_j\in \pre(H_i)\}.
\]
\end{lemma}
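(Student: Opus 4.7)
The plan is to translate the conditions defining $\A_{B,i}^*$ into their rank-2 subarrangement counterparts via Lemma~\ref{lem:lines}, and then match this directly against the definition of $\pre(H_i)$. The key observation is that a line $\ell\in\Lines(\A_B^*)$ passes through $\p_i$ if and only if its associated rank-2 subarrangement (from Lemma~\ref{lem:lines_i}) contains the hyperplane $H_i$; moreover, $\p_i$ lies in the interior of $\conv(\ell)$, as opposed to being one of its two vertices, precisely when $H_i$ is not a basic hyperplane of the associated subarrangement (by Lemma~\ref{lem:lines_ii}).

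For the inclusion $\supseteq$, I would start with $H_j\in\pre(H_i)$. By definition, $H_j$ is basic in $\A|_{i,j}$ while $H_i$ is not. Applying Lemma~\ref{lem:lines_i} to the rank-2 subarrangement $\A|_{i,j}$ yields a line $\ell$ of $\A_B^*$ containing both $\p_i$ and $\p_j$. By Lemma~\ref{lem:lines_ii}, the vertices of $\conv(\ell)$ are exactly the points dual to basic hyperplanes; thus $\p_j$ is a vertex of $\conv(\ell)$ while $\p_i$ is not, so $\p_i$ lies in the relative interior of $\conv(\ell)$. This exhibits $\p_j$ as a vertex of the convex hull of a line containing $\p_i$ in its interior, placing $\p_j\in\A_{B,i}^*$.

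For the reverse inclusion $\subseteq$, I would take $\p_j\in\A_{B,i}^*$ with $\p_j\neq\p_i$. By definition, $\p_j$ is a vertex of $\conv(\ell)$ for some line $\ell\in\Lines(\A_B^*)$ whose interior contains $\p_i$. The line $\ell$ corresponds via Lemma~\ref{lem:lines_i} to a rank-2 subarrangement, which must contain both $H_i$ and $H_j$ and therefore must coincide with $\A|_{i,j}$. Lemma~\ref{lem:lines_ii} then translates the two vertex conditions into the two requirements defining $\pre(H_i)$: $\p_j$ being a vertex gives that $H_j$ is basic in $\A|_{i,j}$, while $\p_i$ being interior (hence not a vertex) gives that $H_i$ is not basic. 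Consequently $H_j\in\pre(H_i)$.

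The reasoning is essentially a direct translation through the duality, so no real obstacle is anticipated; the only small subtlety worth checking carefully is that the rank-2 subarrangement corresponding to a line $\ell$ passing through $\p_i$ is unambiguously $\A|_{i,k}$ for any other $\p_k\in\ell$, which follows from Lemma~\ref{lem:lines_i} together with the fact that a line is determined by any two of its points.
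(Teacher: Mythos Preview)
Your proposal is correct and follows precisely the approach the paper intends: the paper's entire proof is the one-line remark that the lemma follows from Lemma~\ref{lem:lines_ii} and the definition of $\pre$ (via Definition~\ref{def:directed_graph}), and you have carefully unpacked this implication into a full two-inclusion argument using the line/rank-2-subarrangement dictionary of Lemma~\ref{lem:lines}.
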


\begin{definition}[Shard covectors of a point]
Let $\p_i\in\A_B^*$.
A \defn{shard covector of $\p_i$} is a restricted covector~$\sigma^i=\cov|_{\A_{B,i}^*}$ with respect to $\A_{B,i}^*$ such that
\begin{itemize}
\item $\sigma^i_j=*$\quad if and only if\quad $\p_j\not\in \A_{B,i}^*$, and
\item the restriction of $\sigma^i$ to the subconfiguration $\A_{B,i}^*$ is a covector with exactly one zero in position ``$i$''.
\end{itemize}
\end{definition}

\begin{example}
\label{ex:A6_24}
In Figure~\ref{fig:A6_24}, the left image illustrates the affine point configuration $\A(6,24)^*$ for the rank-$3$ braid arrangement with $6$ hyperplanes. 
The right image illustrates the subconfiguration of $\A(6,24)^*$ localized at $\p_6$, $\A(6,24)^*_{6}$. 

\begin{figure}[H]
\begin{center}
\begin{tabular}{c@{\hspace{2cm}}c}
\begin{tikzpicture}
	[scale=0.75,
	 point/.style={circle,fill=black,inner sep=1.5pt},
	 line/.style={black!20,line width=2pt}]

\coordinate (1) at (0.000000000000000,0.000000000000000);
\coordinate (2) at (2.00000000000000,3.46410161513775);
\coordinate (3) at (4.00000000000000,0.000000000000000);
\coordinate (4) at (1.00000000000000,1.73205080756888);
\coordinate (5) at (3.00000000000000,1.73205080756888);
\coordinate (6) at (2.00000000000000,1.15470053837925);

\draw[line] (5) -- (1) -- (2) -- (3) -- (4);

\node[point,label=left:{$\p_1$}]  at (1) {};
\node[point,label=above:{$\p_2$}] at (2) {};
\node[point,label=right:{$\p_3$}] at (3) {};
\node[point,label=left:{$\p_4$}]  at (4) {};
\node[point,label=right:{$\p_5$}] at (5) {};
\node[point,label=below:{$\p_6$}] at (6) {};

\end{tikzpicture}
&
\begin{tikzpicture}
	[scale=0.75,
	 point/.style={circle,fill=black,inner sep=1.5pt},
	 line/.style={black!20,line width=2pt}]

\coordinate (1) at (0.000000000000000,0.000000000000000);
\coordinate (3) at (4.00000000000000,0.000000000000000);
\coordinate (4) at (1.00000000000000,1.73205080756888);
\coordinate (5) at (3.00000000000000,1.73205080756888);
\coordinate (6) at (2.00000000000000,1.15470053837925);

\draw[line] (1) -- (5);
\draw[line] (3) -- (4);

\node[point,label=left:{$\p_1$}]  at (1) {};
\node[point,label=right:{$\p_3$}] at (3) {};
\node[point,label=left:{$\p_4$}]  at (4) {};
\node[point,label=right:{$\p_5$}] at (5) {};
\node[point,label=below:{$\p_6$}] at (6) {};

\end{tikzpicture}
\end{tabular}
\end{center}
\caption{The point configuration $\A(6,24)^*$ for the rank-3 braid arrangement and the subconfiguration $\A(6,24)^*_{6}$ localized at $\p_6$}
\label{fig:A6_24}
\end{figure}
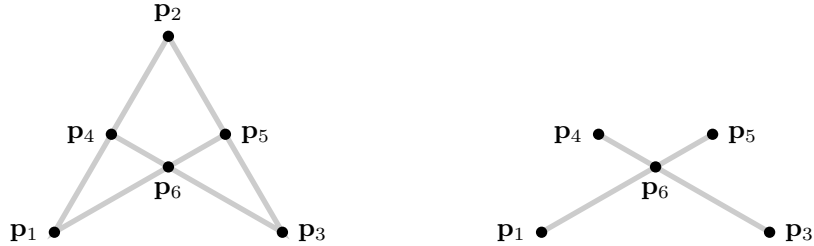

\noindent
There are two pairs of oppositely signed shard covectors of $\p_6$:
\begin{align*}
	\sigma^{6,+} = (+,*,+,-,-,0),\qquad\theta^{6,+}  = (+,*,-,+,-,0),\\
	\sigma^{6,-} = (-,*,-,+,+,0),\qquad\theta^{6,-}  = (-,*,+,-,+,0).
\end{align*}
It is possible to obtain these shard covectors by drawing a line through $\p_6$ in $\A(6,24)^*_{6}$, and choosing a positive and a negative side.
Rotating the line about $\p_6$ in all possible directions, and recording the sign evaluations of the points in $\A(6,24)^*_{6}$ relative to the line exhausts all possibilities. 
\end{example}

We now associate a restricted covector to each shard using the sign evaluation of vectors.
Let~$\Sigma^i$ be a shard contained in hyperplane $H_i$, and let $\x\in\inter(\Sigma^i)$. 
Using Lemma~\ref{lem:shards_Hdesc} and~\ref{lem:subconf}, we get
\[
\cov_{\A^*_{B,i}}(\x) := \left(\cov_{\A^*_{B,i}}(\x)_j=
\begin{cases}
0 & \text{if } j=i,\\
+ & \text{if } H_j\in\pre(H_i)\cap\Sep(J_\Sigma) \\
- & \text{if } H_j\in\pre(H_i)\setminus\Sep(J_\Sigma) \\
\end{cases}
\right)_{j\in[m]\text{ and }\p_j\in\A^*_{B,i}}.
\]
Completing this sign evaluation to the configuration $\A^*_B$, we get the restricted covector
\[
\sigma^i := \left(\sigma^i_j=
\begin{cases}
\cov_{\A^*_{B,i}}(\x)_j & \text{if } H_j\in\pre(H_i)\cup\{H_i\}\\
* & \text{if } H_j\notin\pre(H_i)\cup\{H_i\}
\end{cases}
\right)_{j\in[m]}.
\]
This restricted covector is independent of the choice of vector $\x\in\inter(\Sigma^i)$, thanks to Lemma~\ref{lem:shards_Hdesc}, and only depends on the choice of base region $B$.

\begin{theorem}
\label{thm:shard_bijection}
Let $\A$ be a tight hyperplane arrangement with respect to a base region $B$.
The map sending a shard~$\Sigma^i$ to the shard covector~$\sigma^i$ gives a bijection between the shards of $\A$ with base region~$B$ and the shard covectors of $\A^*_B$.
\end{theorem}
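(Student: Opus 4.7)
The plan is to establish injectivity and surjectivity of the map $\Sigma^i \mapsto \sigma^i$ separately, relying on Lemma~\ref{lem:shards_Hdesc} for the structural description of shards and on Lemma~\ref{lem:subconf} to identify which positions of a shard covector carry information. I would begin by verifying that $\sigma^i$, as defined in the paragraph before the theorem, is actually a shard covector of $\p_i$: the $*$-positions match the complement of $\A^*_{B,i}$ by Lemma~\ref{lem:subconf}; the entry $\sigma^i_i$ vanishes because $\x \in \inter(\Sigma^i) \subseteq H_i$; and each entry $\sigma^i_j$ with $H_j \in \pre(H_i)$ is nonzero because $\inter(\Sigma^i)$ avoids every hyperplane cutting $H_i$. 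Well-definedness with respect to the choice of $\x \in \inter(\Sigma^i)$ is immediate from Lemma~\ref{lem:shards_Hdesc}, which determines the signs on $\pre(H_i)$ purely in terms of the partition induced by $\Sep(J_\Sigma)$.

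For injectivity, let $\Sigma^i$ and $\Theta^j$ be two distinct shards. If $i \neq j$, the zero entries of $\sigma^i$ and $\theta^j$ occur in different positions, so the covectors differ. Otherwise $i=j$, and by Definition~\ref{def:shards} the two shards are closures of distinct connected components of $H_i \setminus \bigcup_{H_k \in \pre(H_i)}(H_k \cap H_i)$. Hence some $H_k \in \pre(H_i)$ strictly separates $\inter(\Sigma^i)$ from $\inter(\Theta^i)$, producing opposite signs at position $k$.

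For surjectivity, let $\sigma^i$ be an arbitrary shard covector of $\p_i$. Its restriction to $\A^*_{B,i}$ is, by definition, an affine covector, so there exist $\ce \in \R^d$ and $a \in \R$ with $\sigma^i_j = \sign(\ce \cdot \p_j + a)$ for all $\p_j \in \A^*_{B,i}$. Since $\A^*_B \subset \mathbb{A}_B = \{\p : \ve_B \cdot \p = -1\}$, one has $\ce \cdot \p + a = (\ce - a\ve_B) \cdot \p$ for every $\p \in \mathbb{A}_B$, so after replacing $\ce$ by $\ce - a\ve_B$ we may assume $a=0$. Setting $\x := \ce - a\ve_B$, we obtain $\sigma^i_j = \sign(\p_j \cdot \x)$ on $\A^*_{B,i}$. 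The condition $\sigma^i_i = 0$ forces $\x \in H_i$, and the fact that $\sigma^i_j \neq 0$ for the remaining $\p_j \in \A^*_{B,i}$ means $\x$ avoids each hyperplane in $\pre(H_i)$, so $\x$ lies in the relative interior of a unique shard $\Sigma^i$. Applying Lemma~\ref{lem:shards_Hdesc} to $\Sigma^i$, the covector I associated to $\Sigma^i$ in the first paragraph agrees with $\sigma^i$ on $\A^*_{B,i}$ and carries $*$ elsewhere, hence coincides with $\sigma^i$.

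The main obstacle is the step that converts an abstract affine covector on $\A^*_{B,i}$ into a concrete witness point $\x$ lying on $H_i$; this requires exploiting the placement of $\A^*_B$ inside the affine hyperplane $\mathbb{A}_B$ to eliminate the additive constant $a$. Once $\x$ is produced, the H-description of shards in Lemma~\ref{lem:shards_Hdesc} makes the matching of covectors essentially a matter of reading off signs.
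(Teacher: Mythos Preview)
Your proof is correct and follows essentially the same structure as the paper's: injectivity via the sign description of shards (the paper invokes Lemma~\ref{lem:shards_Hdesc} directly, you argue the equivalent contrapositive from the topological definition) and surjectivity by realizing an abstract shard covector as the sign evaluation of a concrete point $\x\in H_i$. The one place where you are more careful than the paper is the affine-to-linear conversion $\x=\ce-a\ve_B$ using $\A^*_B\subset\mathbb{A}_B$; the paper simply asserts the existence of such an $\x$ without spelling this out.
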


\begin{proof}
\emph{Injectivity.}
Suppose $\sigma^i = \theta^i$ for two shards $\Sigma^i$ and $\Theta^i$, for some $i\in[m]$.
By the definition of $\sigma^i$ and~$\theta^i$, the shard covectors are obtained from some points $\x \in \inter(\Sigma^i)$ and $\y \in \inter(\Theta^i)$ and
\[
\sign(\n_j \cdot \x) = \sign(\n_j \cdot \y) \quad\text{for every } j \text{ such that }  H_j\in \pre(H_i) \cup \{H_i\}.
\]
By Lemma~\ref{lem:shards_Hdesc}, an H-description of the shard is given by the sign evaluation of any of its points in the relative interior with respect to the hyperplanes in $\pre(H_i)$. 
Because $\Sigma^i$ and $\Theta^i$ are both shards on hyperplane~$H_i$, and the sign evaluation of $\x$ and $\y$ agree on all normals in $\pre(H_i)$, $\Sigma^i$ and $\Theta^i$ must be the same. 

\emph{Surjectivity}.
Let $\cov$ be a shard covector with a unique zero at position $i \in [m]$. 
Considered as a sign evaluation, there is an $\x\in\R^d$ such that $\cov=\cov_{\A^*_B}(\x)|_{\A_{B,i}^*}$.
The linear hyperplane with normal $\x$ separates the normal vectors in $\pre(H_i)$ as $\cov$ dictates. 
Thus $\x$ is a point in the relative interior of a shard $\Sigma^i$ of $H_i$ such that ${\sigma^i =\cov}$.
\end{proof}

By Theorem~\ref{thm:shard_bijection}, there is a unique shard covector associated to every shard.
We therefore use lowercase Greek letters $\sigma^i$ to denote the unique shard covector corresponding to a shard~$\Sigma^i$.

\subsection{Forcing relation on covectors}
\label{ssec:forcing_on_cov}
In this section, we use Theorem~\ref{thm:shard_bijection} and interpret the shard digraph~$\Sh_B(\A)$ using shard covectors of $\A^*_B$.
In Definition~\ref{def:shard_dig}, the first condition to get an edge $\Sigma^i\rightarrow\Sigma^j$ translates to the shard covectors of $\p_j$ having a $+$ or $-$ at position~``$i$''.
The second condition requires one to interpret the dimension of intersection of two shards using shard covectors.
To do so, we define \defn{line covectors} of two hyperplanes.  

\begin{definition}[Line covector]
Let $\ell\in\Lines(\A_B^*)$.
A \defn{line covector} of $\ell$ is a covector $\hypint^\ell$ on $\A_B^*$ such that 
\begin{center}
$\hypint^{\ell}_k=0$\quad if and only if\quad $\p_k\in\ell$.
\end{center}
\end{definition}

Line covectors record possible sign evaluations of non-zero points in the intersection of two hyperplanes with respect to~$\A_B^*$.
They come in oppositely signed pairs which we denote by~$\hypint^{\ell,+}$ and $\hypint^{\ell,-}$. 
In the case of rank-3 hyperplane arrangements, these covectors are actually cocircuits of the oriented matroid.
For higher-rank hyperplane arrangements, the set of $0$-indices of a line covector gives a flat of rank~$2$ in the underlying matroid.

\begin{example}[Example~\ref{ex:A6_24} continued]
Let $\ell=\{\p_1,\p_5,\p_6\}$.
Since $\mathbb{A}_B$ has dimension $2$, the line $\ell$ has exactly two line covectors.
From Figure~\ref{fig:A6_24}, we deduce that the line covectors of $\ell$ are:
\[
\begin{aligned}
	\hypint^{\ell,+} &= (0,+,-,+,0,0), \\
	\hypint^{\ell,-} &= (0,-,+,-,0,0). \\
\end{aligned}
\]
\end{example}

\begin{lemma}
\label{lem:codim2}
Let $\A_B^*=\{\p_i\}_{i\in[m]}$ be an affine point configuration, $1\leq i<j\leq m$, $\ell$ be the line spanned by $\p_i$ and $\p_j$, and $\hypint^\ell$ be a line covector of $\ell$.
The set ${\{\, \x\in (H_i \cap H_j)~:~\cov_{\A_B^*}(\x)=\hypint^\ell\, \}}$ has dimension~$d-2$.
\end{lemma}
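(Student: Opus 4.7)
The plan is to reduce the statement to a linear-algebraic description of $H_i\cap H_j$, exhibit an explicit witness that achieves the prescribed signs, and then conclude by an open-set argument.

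\smallskip

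First, I would reduce the zero entries to membership in $H_i\cap H_j$. By the construction of $\A_B^*$, $\p_k=-\frac{1}{\ve_B\cdot\n_k}\,\n_k$ with $\ve_B\cdot\n_k<0$, so $\p_k$ is a positive scalar multiple of $\n_k$ and hence $\cov_{\A_B^*}(\x)_k=\sign(\p_k\cdot\x)=\sign(\n_k\cdot\x)$. By Lemma~\ref{lem:lines_i}, the hyperplanes of the rank-$2$ subarrangement $\A|_{i,j}$ are exactly those $H_k$ with $\p_k\in\ell$, and $H_i\cap H_j=\bigcap_{H_k\in\A|_{i,j}}H_k$ is a linear subspace of dimension $d-2$. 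Consequently, the zeros prescribed by $\hypint^\ell$ (which occur precisely at indices $k$ with $\p_k\in\ell$) are automatically realized by every $\x\in H_i\cap H_j$.

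\smallskip

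Next, I would construct a witness $\x_0\in H_i\cap H_j$ with $\cov_{\A_B^*}(\x_0)=\hypint^\ell$. By definition of a covector, there exist $\ce\in\R^d$ and $a\in\R$ with $\hypint^\ell_k=\sign(\ce\cdot\p_k+a)$ for all $k\in[m]$. Using the defining identity $\ve_B\cdot\p_k=-1$ for every $k$, the vector $\x_0:=\ce-a\,\ve_B$ satisfies $\x_0\cdot\p_k=\ce\cdot\p_k-a(\ve_B\cdot\p_k)=\ce\cdot\p_k+a$, so that $\cov_{\A_B^*}(\x_0)=\hypint^\ell$. In particular, the zeros at positions $i$ and $j$ place $\x_0$ in $H_i\cap H_j$.

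\smallskip

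Finally, the remaining coordinates $\hypint^\ell_k\in\{+,-\}$ (for $\p_k\notin\ell$) impose strict inequalities $\sign(\p_k\cdot\x)=\hypint^\ell_k$, which define an open subset of $\R^d$. Its intersection with the $(d-2)$-dimensional subspace $H_i\cap H_j$ is relatively open in $H_i\cap H_j$ and contains $\x_0$, and hence has dimension $d-2$. The only subtle step is the witness construction---namely, the absorption of the affine offset $a$ into the linear shift by $-a\,\ve_B$---which relies on the normalization $\ve_B\cdot\p_k=-1$ built into the definition of $\A_B^*$; everything else is a routine open/closed argument.
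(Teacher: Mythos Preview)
Your proof is correct and follows essentially the same idea as the paper's: both show that the prescribed set is a nonempty relatively open subset of the $(d-2)$-dimensional subspace $H_i\cap H_j$, hence has dimension $d-2$. The paper phrases the openness by perturbing a witness $\x$ in all directions $\ve\in\spa(\n_i,\n_j)^\perp$ and checking that signs are preserved for small $\varepsilon$, while you phrase it as intersecting strict inequalities with a linear subspace; these are the same argument.

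The one genuine addition in your proof is the explicit witness construction $\x_0=\ce-a\,\ve_B$ absorbing the affine offset via $\ve_B\cdot\p_k=-1$. The paper simply begins ``Let $\x\in H_i\cap H_j$ with $\cov_{\A_B^*}(\x)=\hypint^\ell$'', taking existence for granted (relying on the informal remark preceding the lemma that line covectors ``record possible sign evaluations of non-zero points in the intersection of two hyperplanes''). Your construction makes this step rigorous and is a clean use of the normalization built into $\A_B^*$.
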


\begin{proof}
Let $\x \in H_i \cap H_j$ with $\cov_{\A_B^*}(\x) = \hypint^\ell$. 
For any $\ve \in \spa(\n_i,\n_j)^\perp$ and $\varepsilon>0$, the $k$-th entry of $\cov_{\A_B^*}(\x + \varepsilon \ve)$ is equal to
\begin{align*}
	\cov_{\A_B^*}(\x + \varepsilon \ve)_k & = \sign{( \x \cdot \n_k + \varepsilon( \ve \cdot \n_k))} \\
				      & =
\begin{cases}
0 & \text{ if } k \in \{i,j \}, \\
\sign(\x \cdot \n_k + \varepsilon(\ve \cdot \n_k)) & \text{ if } k \notin \{i,j\}.\\
\end{cases}
\end{align*}
When $\varepsilon$ is chosen small enough, then
\[
\cov_{\A_B^*}(\x + \varepsilon \ve)_k = \cov_{\A_B^*}(\x)_k = \hypint^\ell_k.
\]
Thus $\dim(\{\x\in (H_i \cap H_j)~:~\cov_{\A_B^*}(\x)=\hypint^\ell\}) = \dim(\spa(\n_i,\n_j)^\perp) = d-2.$
\end{proof}

\begin{example}[Example~\ref{ex:A6_24} continued]
Figure \ref{fig:A6_24_shards} shows a stereographic projection of $\A(6,24)$ broken into shards.
The shards $\Theta^{6,+}$ and $\Sigma^1 = H_1$ are thickened and one sees that~$H_1$ cuts $H_6$.
The shards $\Sigma^1$ and $\Theta^{6,+}$ intersect at a point so there is an oriented edge $\Sigma^1\rightarrow\Theta^{6,+}$ in the shard digraph.

\begin{figure}[H]
\begin{center}
\begin{tikzpicture}
\node at (0,0) {\includegraphics[width=2.5in]{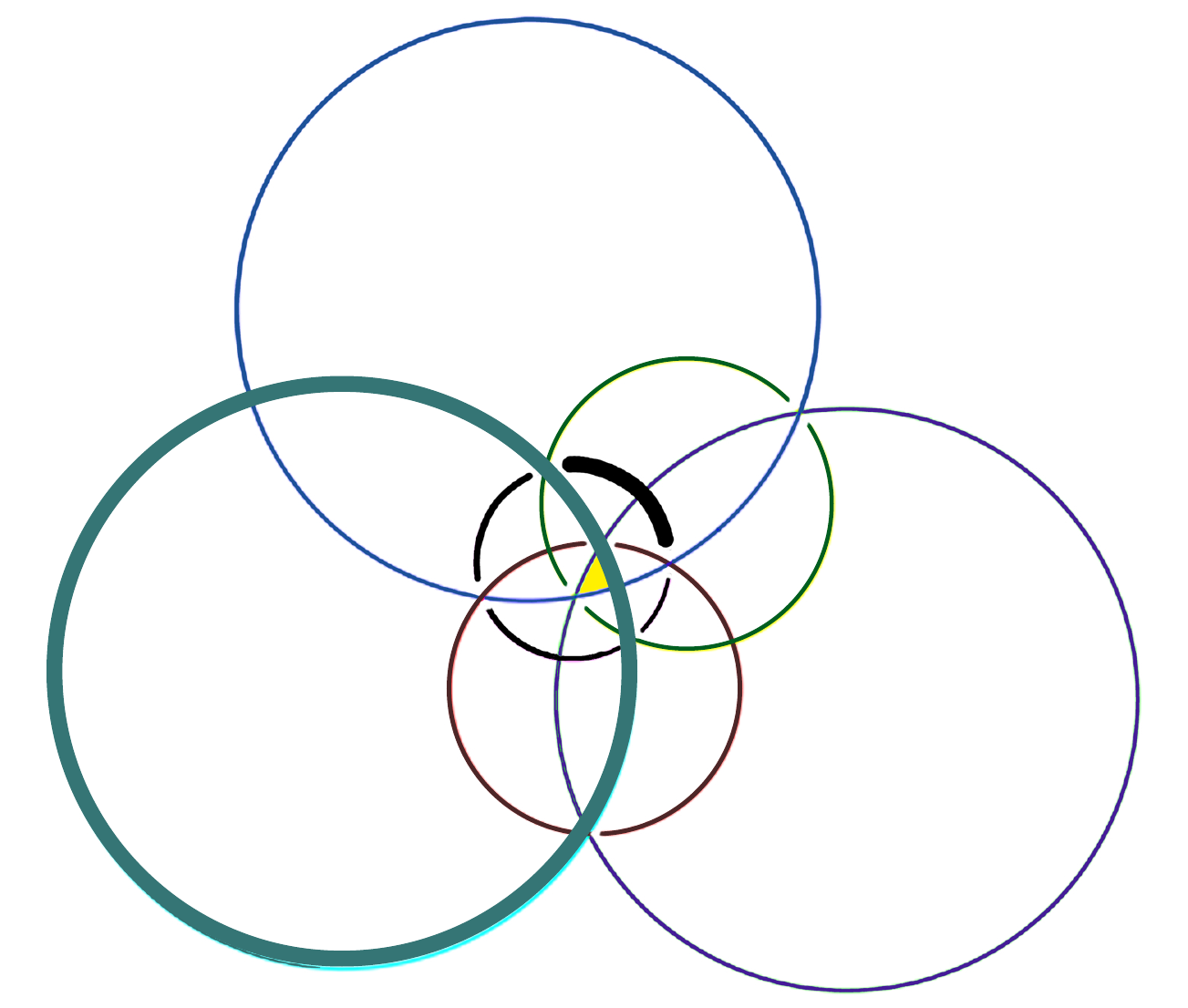}};
\node at (-3.6,-.5) {$H_1 = \Sigma^1$};
\node at (2.5,.4) {$H_2$};
\node at (-1.8,2.3) {$H_3$};
\node at (-1,-1.5) {$H_4$};
\node at (1.5,-.3) {$H_5$};
\node at (-.9,0) {$H_6$};
\node at (.35,.45) {$\Theta^{6,+}$};
\end{tikzpicture}
\end{center}
\caption{The shards of the arrangement $\A(6,24)$ shown via stereographic projection}
\label{fig:A6_24_shards}
\end{figure}

\noindent
This fact translates to a property of the corresponding shards covectors $\sigma^{1}=(0,*,*,*,*,*)$ and $\theta^{6,+}=(+,*,-,+,-,0)$.
Indeed, consider the line $\ell=\{\p_1,\p_5,\p_6\}$ and the line covector $\hypint^{\ell,+}=(0,+,-,+,0,0)$.
Then $\hypint^{\ell,+} \cap \theta^{6,+} \cap \sigma^{1} = (0,+,-,+,0,0)$.
Figure~\ref{fig:shic} illustrates the affine point configuration~$\A(6,24)^*$ along with the three oriented lines describing the involved covectors.

\begin{figure}[H]
\begin{center}
\begin{tikzpicture}
	[ point/.style={circle,fill=black,inner sep=1.5pt},
	 line/.style={black!20,line width=2pt}]

\coordinate (1) at (0.000000000000000,0.000000000000000);
\coordinate (2) at (2.00000000000000,3.46410161513775);
\coordinate (3) at (4.00000000000000,0.000000000000000);
\coordinate (4) at (1.00000000000000,1.73205080756888);
\coordinate (5) at (3.00000000000000,1.73205080756888);
\coordinate (6) at (2.00000000000000,1.15470053837925);

\draw[line,dashed] (1) -- (5);
\draw[line,dashed] (1) -- (2);
\draw[line,dashed] (3) -- (2);
\draw[line,dashed] (3) -- (4);
\draw[line,blue,shorten >= -35mm, shorten <= -10mm] (1) -- (5);
\draw[line,black] (2.00000000000000,1.15470053837925) -- +(2,-2);
\draw[line,black] (2.00000000000000,1.15470053837925) -- +(-2.5,2.5);
\draw[line,teal] (0,0) -- +(-3,3);
\draw[line,teal] (0,0) -- +(1,-1);
\node[point,label=below:{$\p_1$}]  at (1) {};
\node[point,label=above:{$\p_2$}] at (2) {};
\node[point,label=below:{$\p_3$}] at (3) {};
\node[point,label=left:{$\p_4$}]  at (4) {};
\node[point,label=below:{$\p_5$}] at (5) {};
\node[point,label=below:{$\p_6$}] at (6) {};
\node[label = right:{\textcolor{teal}{$-$}}] at (-2,2) {};
\node[label = left:{\textcolor{teal}{$+$}}] at (-2,2) {};
\node[label = left:{\textcolor{black}{$+$}}] at (0,3.15) {};
\node[label = right:{\textcolor{black}{$-$}}] at (0,3.15) {};
\node[label = above left:{\textcolor{blue}{$+$}}] at (4.5,2.625) {};
\node[label = below right:{\textcolor{blue}{$-$}}] at (4.5,2.625) {};
\node[label = below:{$\hypint^{\ell,+}$}]      at (6,3.46) {};
\node[label = left:{$\theta^{6,+}$}] at (-0.5,3.5) {};
\node[label = left:{$\sigma^{1}$}] at (-3,3) {};
\end{tikzpicture}
\end{center}
\caption{The point configuration $\A(6,24)^*$ and hyperplanes describing the covectors $\sigma^1$, $\theta^{6,+}$, and $\hypint^{\ell,+}$, where $\ell=\{\p_1,\p_5,\p_6\}$}
\label{fig:shic}
\end{figure}
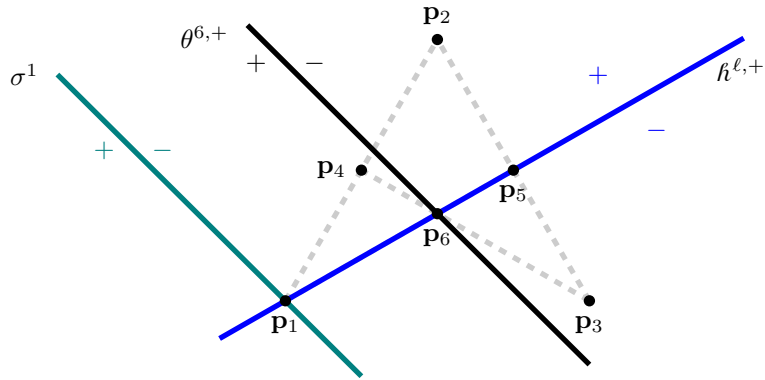

\noindent
It is possible to interpret the fact that the two shards intersect at a point as follows.
Apply a clockwise rotation to the line labeled $\theta^{6,+}$ about the point $\x_6$ until it collides with the line corresponding to $\hypint^{\ell,+}$. 
During the rotation, the line did not cross any points in $\A_{B,6}^*$. 
Similarly, applying the same with the line labeled $\sigma^1$ about $\x_1$ does not cross any points in $\A_{B,1}^*=\varnothing$.
\end{example}

The theorem below shows that the above equality is exactly the necessary and sufficient condition for the two involved shards to have an intersection of dimension~$d-2$.

\begin{theorem}
\label{thm:covector_forcing}
Let $\A_B^*=\{\p_i\}_{i\in[m]}$ be an affine point configuration, $1\leq i<j\leq m$, and let $\ell$ be the line spanned by $\p_i$ and $\p_j$.
Furthermore, let $\Sigma^i \subseteq H_i$ and ${\Theta^j \subseteq H_j}$ be two shards. 
The intersection $\Sigma^i \cap \Theta^j$ has dimension~$d-2$ if and only if there exists a line covector $\hypint^\ell$ such that $\hypint^{\ell}\cap \sigma^i\cap\theta^j= \hypint^\ell$.
\end{theorem}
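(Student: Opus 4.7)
The plan is to prove both directions by working with points in $\Sigma^i \cap \Theta^j$ and using the inequality description of shards from Lemma~\ref{lem:shards_Hdesc}, together with Lemma~\ref{lem:codim2}. Throughout, the intersection condition $\hypint^\ell \cap \sigma^i \cap \theta^j = \hypint^\ell$ is to be unpacked as follows: at every index $k$ with $\hypint^\ell_k \neq 0$ (equivalently $\p_k \notin \ell$), the entries $\sigma^i_k$ and $\theta^j_k$ are each either $*$ or equal to $\hypint^\ell_k$, while the indices with $\hypint^\ell_k = 0$ (equivalently $\p_k \in \ell$) impose no constraint since $0$ absorbs everything under $\cap$.

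For the sufficient direction, I would begin with a line covector $\hypint^\ell$ satisfying the hypothesis and invoke Lemma~\ref{lem:codim2} to obtain the $(d-2)$-dimensional set $S := \{\x \in H_i \cap H_j : \cov_{\A_B^*}(\x) = \hypint^\ell\}$. I would then show $S \subseteq \Sigma^i \cap \Theta^j$: for any $\x \in S$ and any $H_k \in \pre(H_i)$, we have $\sigma^i_k \in \{+,-\}$, so the intersection condition forces $\sigma^i_k = \hypint^\ell_k = \sign(\n_k \cdot \x)$, and Lemma~\ref{lem:shards_Hdesc} places $\x$ inside $\Sigma^i$; dually $\x \in \Theta^j$. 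Since $\Sigma^i \cap \Theta^j \subseteq H_i \cap H_j$ has dimension at most $d-2$, this forces the intersection dimension to be exactly $d-2$.

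For the necessary direction, assume $\dim(\Sigma^i \cap \Theta^j) = d-2$, which is its full dimension inside $H_i \cap H_j$. I would select a point $\x$ in the relative interior of $\Sigma^i \cap \Theta^j$ that avoids every hyperplane $H_k$ with $\p_k \notin \ell$. This genericity step rests on the observation that $H_i \cap H_j \subseteq H_k$ if and only if $\n_k \in \spa(\n_i,\n_j)$ if and only if $\p_k \in \ell$; consequently, for every $k$ with $\p_k \notin \ell$, the set $\Sigma^i \cap \Theta^j \cap H_k$ lies inside $H_i \cap H_j \cap H_k$ and thus has dimension at most $d-3$, so the finite union of such bad subsets cannot cover the relative interior. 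Setting $\hypint^\ell := \cov_{\A_B^*}(\x)$, the zero set of $\hypint^\ell$ is exactly $\{k : \p_k \in \ell\}$ by construction, so $\hypint^\ell$ is a line covector of $\ell$. I would then check the equation $\hypint^\ell \cap \sigma^i \cap \theta^j = \hypint^\ell$ entry-wise: if $\p_k \in \ell$ then $\hypint^\ell_k = 0$ absorbs everything; if $H_k \notin \pre(H_i) \cup \{H_i\}$ then $\sigma^i_k = *$ contributes nothing; and if $H_k \in \pre(H_i)$ with $\p_k \notin \ell$, then Lemma~\ref{lem:shards_Hdesc} applied to $\x \in \Sigma^i$ gives $\sign(\n_k \cdot \x) \in \{\sigma^i_k, 0\}$, and the genericity of $\x$ excludes the value $0$, so $\hypint^\ell_k = \sigma^i_k$; the case for $\theta^j_k$ is symmetric.

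The main obstacle will be the genericity argument in the necessary direction: one needs to confirm that the relative interior of $\Sigma^i \cap \Theta^j$ in $H_i \cap H_j$ truly contains a point avoiding every hyperplane $H_k$ with $\p_k \notin \ell$, so that the sign pattern of $\x$ records only the behaviour ``orthogonal'' to $\ell$. Once that is secured, both directions reduce to routine case analyses on the indices of the defining covectors.
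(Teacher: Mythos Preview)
Your proposal is correct and follows the same strategy as the paper's proof: in both directions the paper also selects a generic point whose sign vector is a line covector and compares entrywise with $\sigma^i$ and $\theta^j$ using the $H$-description of shards. One small caveat in your sufficient direction: the claim that the intersection condition forces $\sigma^i_k = \hypint^\ell_k$ for \emph{every} $H_k \in \pre(H_i)$ breaks down when $\p_k \in \ell$ (there $\hypint^\ell_k = 0$ while $\sigma^i_k \in \{+,-\}$), but in that case $\n_k \cdot \x = 0$ automatically since $H_k \supseteq H_i \cap H_j$, so the weak inequalities of Lemma~\ref{lem:shards_Hdesc} still hold and $\x \in \Sigma^i$ as you need---just add this case split (the paper handles it via a convex-combination limit, but your direct appeal to Lemma~\ref{lem:shards_Hdesc} works equally well once this is noted).
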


\begin{proof}
Assume $\dim (\Sigma^i \cap \Theta^j) = d-2$. 
Hence there exists $\x \in \Sigma^i \cap \Theta^j$ such that the sign evaluation $\cov_{\A_B^*}(\x)_k$ equals zero if and only if $\p_k \in \ell$. 
Therefore $\cov_{\A_B^*}(\x)$ is a line covector of $\ell$.
If $\x$ is in the boundary of $\Sigma^i$, then for $\z \in \inter (\Sigma^i), \p_k \in \A_{B,i}^*$, either $\sign(\x \cdot \p_k) = \sign(\z \cdot \p_k)$ or $\sign( \x \cdot \p_k) = 0$.
As $\cov_{\A_B^*}(\x)_k$ equals zero if and only if $\p_k \in \ell$, $\sigma^i_k = \cov_{\A_{B}^*}(\x)_k$ for all $k$ such that $\p_k \in (\A_{B,i}^*\setminus\ell)$.
Likewise, $\theta^j_k = \cov_{\A_{B}^*}(\x)_k$ for all~$k$ such that $\p_k \in (\A_{B,j}^*\setminus\ell)$.
Thus, 
\[
\cov_{\A_B^*}(\x) \cap \sigma^i \cap \theta^j  = \cov_{\A_B^*}(\x).
\]

Assume now that there exists a line covector $\hypint^\ell$ such that ${\hypint^{\ell}\cap \sigma^i\cap\theta^j=\hypint^\ell}$.
Let $S = \{\x \in {(H_i \, \cap\, H_j)}~:\cov_{\A_B^*}(\x) = \hypint^\ell \}$.
By Lemma~\ref{lem:codim2}, $\dim(S)=d-2$.
Let $\x \in S$, $\y \in \inter(\Sigma^i)$, and $\z \in \inter(\Theta^j)$. 
As $\hypint^\ell \cap \sigma^i = \hypint^\ell$, $\sign(\x \cdot \p_k) = \sign(\y \cdot \p_k)$ for all $\p_k \in \A_{B,i}^* \setminus \ell$.
For $\p_k \in \ell$, we have $\x \cdot \p_k = 0$. 
For $0 \leq \lambda \leq 1$, let $\m_\lambda  = (1-\lambda)\y + \lambda \x$.
Then $\cov_{\A_{B,i}^*}(\m_\lambda)_k = \sign(\y \cdot \p_k)$ for all~$k$ such that $\p_k \in \A_{B,i}^*$, and $\lambda \in [0,1)$. 
This shows that $\m_\lambda \in \Sigma^i$ for all $\lambda \in [0,1)$, and thus~$\x$ is contained in $\Sigma^i$.
A similar argument with $\z$ shows that $\x$ is in $\Theta^j$. 
\end{proof}

\begin{corollary}
\label{cor:covector_forcing}
There is a directed arrow $\Sigma^i \rightarrow \Theta^j$ in $\Sh_B(\A)$ if and only if $\theta^j_i\in\{-,+\}$ and there exists a line covector $\hypint^{\ell}$ such that $\hypint^{\ell}\cap \sigma^i\cap\theta^j = \hypint^\ell$. 
\end{corollary}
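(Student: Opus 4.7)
The plan is to decompose the corollary along the two defining conditions of the shard digraph in Definition~\ref{def:shard_dig} and show that each condition is captured by one part of the conjunction on the covector side.

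First I would treat the cutting condition $H_i \to H_j$ in $\H_B(\A)$, which, by the definition of $\pre$, says that $H_i \in \pre(H_j)$. By Lemma~\ref{lem:subconf}, this is equivalent to $\p_i \in \A^*_{B,j}$. Now the shard covector $\theta^j$ carries a ``$*$'' in position $k$ exactly when $\p_k \notin \A^*_{B,j}$, and its unique zero entry is in position $j$; since a directed edge in $\Sh_B(\A)$ forces $i \neq j$, the entry $\theta^j_i$ is not zero. Hence $\theta^j_i \in \{+,-\}$ if and only if $\p_i \in \A^*_{B,j}$, which is exactly the first condition in Definition~\ref{def:shard_dig}.

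Next I would handle the codimension condition $\dim(\Sigma^i \cap \Theta^j) = d-2$. Here Theorem~\ref{thm:covector_forcing} is directly applicable: with $\ell$ the line spanned by $\p_i$ and $\p_j$ in $\A^*_B$ (which exists since $H_i \cap H_j$ has codimension $2$, or equivalently $\p_i \neq \p_j$), the intersection $\Sigma^i \cap \Theta^j$ has dimension $d-2$ if and only if some line covector $\hypint^{\ell}$ of $\ell$ satisfies $\hypint^{\ell} \cap \sigma^i \cap \theta^j = \hypint^\ell$. Conjoining this with the translation of the first condition yields the stated biconditional.

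The only delicate point, and the step I would double-check most carefully, is ensuring that the two clauses in the corollary are genuinely independent, i.e.\ that the existence of a line covector with $\hypint^{\ell} \cap \sigma^i \cap \theta^j = \hypint^\ell$ does not already force $\theta^j_i \in \{+,-\}$ implicitly. In principle $\theta^j_i = *$ is compatible with such an equality because ``$*$'' absorbs any sign; therefore one really needs both conditions to recover the full content of Definition~\ref{def:shard_dig}. This is a short observation but it is exactly what justifies stating the corollary as a conjunction rather than a single covector equation, and I would spell it out to make the proof complete.
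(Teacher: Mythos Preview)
Your proposal is correct and matches the paper's implicit argument: the corollary is stated immediately after Theorem~\ref{thm:covector_forcing} without proof, since it follows by combining that theorem (for the dimension condition) with Lemma~\ref{lem:subconf} and the definition of shard covectors (for the cutting condition), exactly as you outline. Your closing remark that the line-covector clause alone does not force $\theta^j_i \in \{+,-\}$ is a nice clarification of why the conjunction is needed, though the paper does not spell this out.
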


\subsection{Obstruction to congruence normality}
\label{ssec:obstruction}

\begin{example}[Example~\ref{ex:A10_60} continued]
The normal vectors $\{\n_i\}_{i\in [10]}$ for this configuration can be chosen as follows. 
Let $\tau = \frac{1 + \sqrt{5}}{2}$ and 
$ \n_1=(0, 1, 0), \n_2=(1, 0, 0), \n_3=(1, 1, 0), \n_4=(1, 1, 1), \n_5=(\tau+1, \tau, \tau), \n_6=(\tau+1, \tau+1, 1), \n_7=(\tau+1, \tau+1, \tau), \n_8=(2\tau,2\tau,\tau), \n_9=(2\tau+1,2\tau,\tau), \n_{10}=(2\tau+2, 2\tau+1, \tau+1)$.
Let $B$ be the base region containing the vector $\ve= (-1,-1,-2)$.
Figure \ref{fig:A10_60_star} illustrates $\A(10,60)_{3,B}^{*}{}$ along with four lines describing the shard covectors
\begin{align*}
\begin{array}{ll}
\sigma^6    = (+, *, -, +, *, 0, *, *, -, *), & \theta^8= (-, *, -, +, *, *, *, 0, *, +), \\ 
\upsilon^9  = (*, -, -, *, +, *, *, +, 0, *), & \xi^{10}    = (*, -, *, +, +, -, +, *, -, 0).
\end{array}
\end{align*}

\begin{figure}[H]
\begin{tikzpicture}
	[scale=1.5,
	 point/.style={circle,fill=black,inner sep=1.5pt},
	 line/.style={black!20,line width=1.5pt},
	 rotate=-36.8698976]

\coordinate (A) at (0,0);
\coordinate (B) at (4,3);
\coordinate (C) at (2,3/2);
\coordinate (D) at (0,5);
\coordinate (E) at (0,5/2);
\coordinate (F) at (4/3,8/3);
\coordinate (G) at ($({sqrt(5)},-{sqrt(5)}/2)+(-1,3)$);
\coordinate (H) at ($({2*sqrt(5)/11},-{7*sqrt(5)/22})+(14/11,61/22)$);
\coordinate (I) at ($({-2*sqrt(5)/5},-{4*sqrt(5)/5})+(2,4)$);
\coordinate (J) at ($({-2*sqrt(5)},{7*sqrt(5)/2})+(6,-11/2)$);

\draw[line] (A) -- (B) -- (C);
\draw[line] (A) -- (G) -- (J);
\draw[line] (A) -- (F) -- (I);
\draw[line] (A) -- (D) -- (E);
\draw[line] (B) -- (G) -- (H);
\draw[line] (B) -- (I) -- (J);
\draw[line] (B) -- (E) -- (F);
\draw[line] (C) -- (D) -- (F) -- (J) -- (H);
\draw[line] (C) -- (E) -- (G);
\draw[line] (D) -- (G) -- (I);
\draw[line] (E) -- (H) -- (I);

\draw[thick, black!60!green] (H) -- +(5:2.3cm);
\draw[thick, black!60!green] (H) -- +(185:3.5cm);
\path (H) -- +(.4:2.2cm) coordinate (6+);
\node[label={[black!60!green]north east:{+}}] at (6+) {}; 
\path (H) -- +(5:2.3cm) coordinate (sigma6);
\node[label={below:{$\sigma^6$}}] at (sigma6) {}; 

\draw[thick, black!60!blue] (G) -- +(185:3cm);
\draw[thick, black!60!blue] (G) -- +(5:2.5cm);
\path (G) -- +(-.5:2.4cm) coordinate (9+);
\node[label={[black!60!blue]north east:{+}}] at (9+) {}; 
\path (G) -- +(5:2.5cm) coordinate (upsilon9);
\node[label={below:{$\upsilon^9$}}] at (upsilon9) {}; 

\draw[thick, black!60!red] (I) -- +(185:3cm);
\draw[thick, black!60!red] (I) -- +(5:3.1cm);
\path (I) -- +(2:3.1cm) coordinate (10+);
\node[label={[black!60!red]north east:{+}}] at (10+) {}; 
\path (I) -- +(5:3.1cm) coordinate (xi10);
\node[label={below:{$\xi^{10}$}}] at (xi10) {}; 

\draw[thick, black!60!orange] (J) -- +(85:3cm);
\draw[thick, black!60!orange] (J) -- +(-95:2cm);
\path (J) -- +(85:3cm) coordinate (8+);
\node[label={[black!60!orange]left:{+}}]  at (8+)  {};
\path (J) -- +(85:3cm) coordinate (theta8);
\node[label={{$\theta^8$}}]  at (theta8)  {};

\node[point,label=left:{$\p_2$}]  at (A) {};
\node[point,label=right:{$\p_1$}] at (B) {};
\node[point,label=below:{$\p_3$}] at (C) {};
\node[point,label=right:{$\p_4$}]  at (D) {};
\node[point,label=left:{$\p_5$}] at (E) {};
\node[point,label=above right:{$\p_7$}] at (F) {};
\node[point,label=below:{$\p_9$}] at (G) {};
\node[point,label=below:{$\p_6$}]  at (H) {};
\node[point,label=left:{$\p_{10}$}] at (I) {};
\node[point,label=right:{$\p_8$}] at (J) {};

\end{tikzpicture}
\caption{The point configuration $\A(10,60)_{3,B}^*$} 
\label{fig:A10_60_star}
\end{figure}
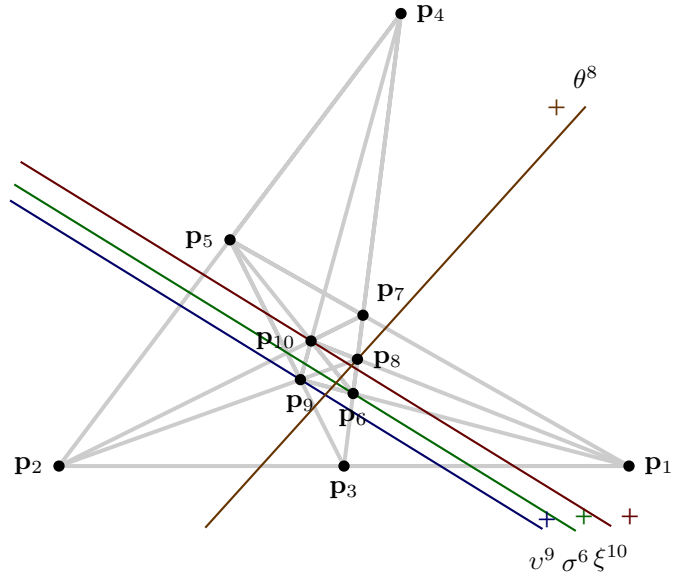

\noindent
Let $\ell_1=\{\p_2,\p_8,\p_9\},\ell_2=\{\p_1,\p_6,\p_9\},\ell_3=\{\p_5,\p_6,\p_{10}\},$ and $\ell_4=\{\p_1,\p_8,\p_{10}\}$ and consider the four line covectors
\begin{align*}
\begin{array}{ll}
\hypint^{\ell_1} = (-,0,-,+,+,-,+,0,0,+), & \hypint^{\ell_2} = (0,-,-,+,+,0,+,+,0,+), \\
\hypint^{\ell_3} = (+,-,-,+,0,0,+,+,-,0), & \hypint^{\ell_4} = (0,-,-,+,+,-,+,0,-,0).
\end{array}
\end{align*}
As $\upsilon^9$ has a ``$+$'' in position 8, $H_8$ cuts $H_9$.
Furthermore, one computes that $\hypint^{\ell_1} \cap\theta^8 \cap \upsilon^9 = \hypint^{\ell_1}$.
By Corollary~\ref{cor:covector_forcing}, there is a directed arrow $\Theta^9\rightarrow\Upsilon^9$ in $\Sh_B(\A)$.
Similar computations reveal that $\theta^8 \rightarrow \upsilon^9 \rightarrow \sigma^6 \rightarrow \xi^{10} \rightarrow \theta^8$ is a cycle in $\Sh_B(\A)$. 
Thus, the poset of regions of $\A(10,60)_3$ with respect to the base region $B$ is not congruence normal.
\end{example}

\begin{example}
\label{ex:min_not_CU}
Removing the hyperplane $H_4$ from the arrangement $\A(10,60)_3$ and taking the base region that contains the vector $\ve=(-1,-1,2)$, one obtains a non-simplicial, tight (hence semidistributive) poset of regions with 52 regions that is not congruence normal as the cycle $\theta^8 \rightarrow \upsilon^9 \rightarrow \sigma^6 \rightarrow \xi^{10} \rightarrow \theta^8$ still occurs in the shard digraph.
Figure~\ref{fig:min_not_CU} illustrates the resulting affine point configuration.
Is there a tight poset of regions which is not congruence normal with at most 8 hyperplanes?
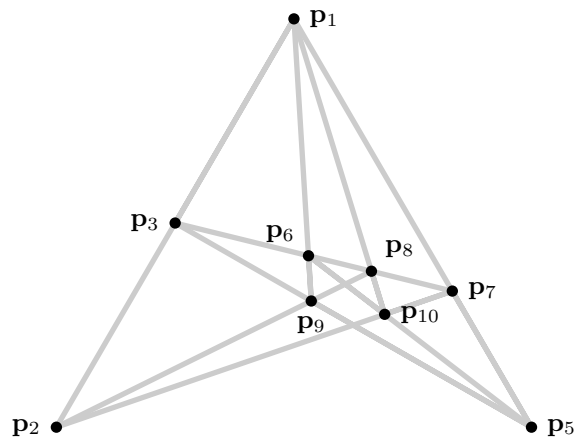
\begin{figure}[H]
\begin{tikzpicture}
	[x={(-0.875cm, 1.08253175473055cm)},
	 y={(2cm, 0cm)},
         scale=1.25,
	 point/.style={circle,fill=black,inner sep=1.5pt},
	 line/.style={black!20,line width=2pt}]

\coordinate (A) at (0,0);
\coordinate (B) at (4,3);
\coordinate (C) at (2,3/2);
\coordinate (D) at (0,5);
\coordinate (E) at (0,5/2);
\coordinate (F) at (4/3,8/3);
\coordinate (G) at ($({sqrt(5)},-{sqrt(5)}/2)+(-1,3)$);
\coordinate (H) at ($({2*sqrt(5)/11},-{7*sqrt(5)/22})+(14/11,61/22)$);
\coordinate (I) at ($({-2*sqrt(5)/5},-{4*sqrt(5)/5})+(2,4)$);
\coordinate (J) at ($({-2*sqrt(5)},{7*sqrt(5)/2})+(6,-11/2)$);

\draw[line] (A) -- (B) -- (C);
\draw[line] (A) -- (G) -- (J);
\draw[line] (A) -- (F) -- (I);
\draw[line] (B) -- (G) -- (H);
\draw[line] (B) -- (I) -- (J);
\draw[line] (B) -- (E) -- (F);
\draw[line] (C) -- (F);
\draw[line] (C) -- (E) -- (G);
\draw[line] (E) -- (H) -- (I);

\node[point,label=left:{$\p_2$}]  at (A) {};
\node[point,label=right:{$\p_1$}] at (B) {};
\node[point,label=left:{$\p_3$}] at (C) {};
\node[point,label=right:{$\p_5$}] at (E) {};
\node[point,label=right:{$\p_7$}] at (F) {};
\node[point,label=below:{$\p_9$}] at (G) {};
\node[point,label=above left:{$\p_6$}]  at (H) {};
\node[point,label=right:{$\p_{10}$}] at (I) {};
\node[point,label=above right:{$\p_8$}] at (J) {};

\end{tikzpicture}
\caption{An affine point configuration leading to a tight, non-congruence normal hyperplane arrangement}
\label{fig:min_not_CU}
\end{figure}
\end{example}

\begin{example}
\label{ex:cycle_in_H}
It is possible to have cycles in $\H_B(\A)$ while $\Sh_B(\A)$ is acyclic, settling the question raised in \cite[p.~203]{reading_lattice_2003}. 
Figure~\ref{fig:cycle_in_H} shows the affine point configuration of arrangement $\A(14,116)$ with respect to the base region that contains the vector $\approx(0.38, 2.85, -7.85)$.
There is a cycle $H_1 \rightarrow H_4 \rightarrow H_7 \rightarrow H_1$ in~$\H_B(\A)$.
However, this cycle does not lead to any cycle among shards included in these three hyperplanes as $\Sh_B(\A)$ was computed to be acyclic in this case. 

\begin{figure}[H]
\begin{tikzpicture}
	[x={(1.6cm, 0cm)},
	 y={(0cm, 1cm)},
	 scale=1.25,
	 point/.style={circle,fill=black,inner sep=1.5pt},
	 line/.style={black!20,line width=1pt}]

\coordinate (0) at  (2.84240404744278, 1.81787943172609);
\coordinate (1) at  (0.00000000000000, 0.00000000000000);
\coordinate (2) at  (2.50000000000000, 1.59889252318084);
\coordinate (3) at  (2.09220259843842, 1.33808283664909);
\coordinate (4) at  (2.50000000000000, 4.33012701892219);
\coordinate (5) at  (2.50000000000000, 0.79130472169025);
\coordinate (6) at  (2.96237314857923, 0.93765594395167);
\coordinate (7) at  (0.77254248593736, 1.33808283664909);
\coordinate (8) at  (2.13525491562421, 1.65396134562402);
\coordinate (9) at  (2.68237254218789, 1.33808283664909);
\coordinate (10) at (1.63627124296868, 2.83410492778564);
\coordinate (11) at (4.22745751406263, 1.33808283664909);
\coordinate (12) at (1.34836165729158, 2.33543089740679);
\coordinate (13) at (5.00000000000000, 0.00000000000000);

\draw[line] (0) -- (5);
\draw[line] (0) -- (8);
\draw[line] (10) -- (13);
\draw[line] (11) -- (12);
\draw[line] (1) -- (4);
\draw[line] (1) -- (11);
\draw[line] (4) -- (5);
\draw[line] (6) -- (10);
\draw[line] (8) -- (11);
\draw[line] (12) -- (13);
\draw[line] (3) -- (4);
\draw[line] (5) -- (12);
\draw[line] (7) -- (11);
\draw[line] (4) -- (13);
\draw[line] (7) -- (13);
\draw[line] (5) -- (10);
\draw[line] (6) -- (12);
\draw[line] (8) -- (13);
\draw[line,red,line width=2pt,dashed] (4) -- (6);
\draw[line,red,line width=2pt,dashed] (0) -- (1);
\draw[line,red,line width=2pt,dashed] (3) -- (13);

\node[point,label=above right:{$1$}]  at (0) {};
\node[point,label=left:{$2$}]  at (1) {};
\node[point,label=right:{$3$}]  at (2) {};
\node[point,label=below:{$4$}]  at (3) {};
\node[point,label=left:{$5$}]  at (4) {};
\node[point,label=below:{$6$}]  at (5) {};
\node[point,label=below:{$7$}]  at (6) {};
\node[point,label=left:{$8$}]  at (7) {};
\node[point,label=left:{$9$}]  at (8) {};
\node[point,label=right:{$10$}]  at (9) {};
\node[point,label=left:{$11$}]  at (10) {};
\node[point,label=right:{$12$}]  at (11) {};
\node[point,label=left:{$13$}]  at (12) {};
\node[point,label=right:{$14$}]  at (13) {};

\end{tikzpicture}
\caption{The point configuration of arrangement $\A(14,116)$ with respect to the base region containing $\approx(0.38, 2.85, -7.85)$}
\label{fig:cycle_in_H}
\end{figure}
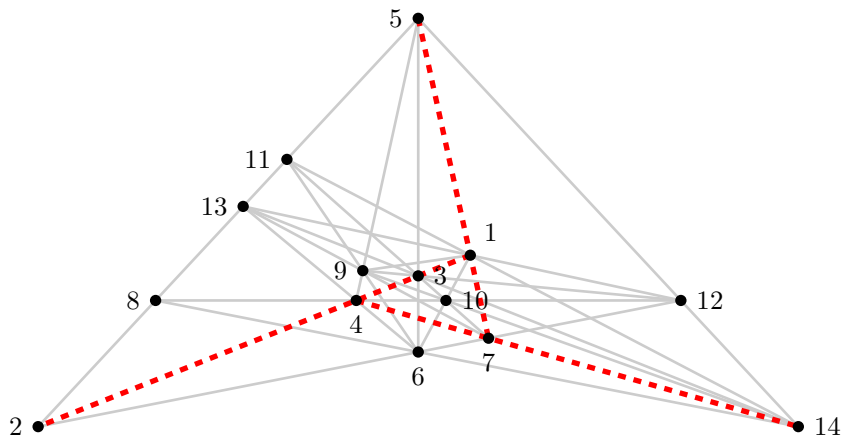
\end{example}

\section{Congruence normality of simplicial hyperplane arrangements}
\label{sec:simplicial}

As the number of regions of a rank-three hyperplane arrangement grows quadratically with the number of hyperplanes, its poset of regions becomes costly to construct in practice when the number of hyperplanes gets large.
Consequently, checking if a given poset of regions is obtainable through doublings of convex sets becomes impracticable.
Using shards to determine congruence normality first involves computing polyhedral cones contained in each hyperplane (i.e.\ regions in restrictions of the hyperplane arrangement) and then checking the dimensions of intersection for pairs of shards.
In contrast, the methods developed in Section~\ref{sec:cong_norm} make the determination of congruence normality for posets of regions of rank-$3$ hyperplane arrangements tractable and could be extended to higher dimensions given a method for determining the covectors of the oriented matroid.
Additionally, the oriented matroid approach makes it possible and natural to check congruence normality for non-realizable oriented matroids.

One of the motivations for studying congruence normality is to better understand simplicial hyperplane arrangements.
In rank $3$, the number of simplicial hyperplane arrangements is unknown \cite{grunbaum_2009,cuntz_greedy_2020}.
So far, three infinite families and 95 sporadic arrangements have been found.
It is conjectured that there are only finitely many sporadic arrangements.
The largest sporadic arrangement found so far has 37 hyperplanes.
In this section, we apply our reformulation of shards as shard covectors to classify which of the known simplicial hyperplane arrangements of rank $3$ are congruence normal.
This verification was carried out using \texttt{Sage} \cite{sagemath}.
The computations took around 18 hours on $8$ Intel Cores (i7-7700 @3.60Hz).
The verification for each poset of regions was computed independently, for example the cocircuits were recomputed for each reorientation of the set of normals, but the computations of intersections on covectors were cached.
The computation could be further improved by applying the reorientation on cocircuits directly in order to avoid recomputing them. 

Our results are summarized in Table~\ref{tab:cn_part}.
We use the following notation: $\A(m,r)_i$ denotes the $i$-th hyperplane arrangement with $m$ hyperplanes and $r$ regions.
We refer to congruence normality using the acronym \textbf{CN} and \textbf{NCN} for non-congruence normality.
The normals of the $119$ arrangements from the known sporadic arrangements and two of the infinite families are listed in the Appendix~\ref{app:lists} and the corresponding wiring diagrams are listed in Appendix~\ref{app:wd}.
The list includes the sporadic arrangements and the arrangements from the infinite families with at most $37$ hyperplanes.

\begin{table}[H]
\begin{tabular}{c@{\hspace{1.5cm}}c@{\hspace{1.5cm}}c}
$\PosBA$ always \textbf{CN} & $\PosBA$ sometimes \textbf{CN} & $\PosBA$ never \textbf{CN} \\\toprule[1pt]
Rank-$3$ Finite Weyl Groupoids & $\F_2(m)$ ($m\geq 10$) & $\A(22,288)$ \\
(including $\F_2(m)$ ($m\leq 8$) & $\F_3(m)$ ($m\geq 17$) & $\A(25,360)$ \\ 
and $\F_3(m)$ ($m\leq13$)) & 41 arrangements                 & $\A(35,680)$ \\
$\A(15,120)$ & & \\
$\A(31,480)$ \\
$\F_1(m)$ & & \\\bottomrule[1pt]
55 arrangements & 61 arrangements & 3 arrangements \\
see Section~\ref{ssec:always} & see Sections~\ref{ssec:sometimes} and \ref{ssec:infinite} & see Section~\ref{ssec:never} \\ 
and Table~\ref{tab:alwayscn} & and Table~\ref{tab:sometimescn} & and Table~\ref{tab:nevercn}
\end{tabular}
\caption{Classification of rank-$3$ simplicial hyperplane arrangements with at most 37 hyperplanes according to the congruence normality of their posets of regions}
\label{tab:cn_part}
\end{table}

Table~\ref{tab:cn_part} provides material to check the veracity of \cite[Conjecture~145]{padrol_shard_2020}, which postulates the existence of certain polytopes for tight congruence normal arrangements. 
Section~\ref{ssec:always} looks at the arrangements that are always \textbf{CN}, Section~\ref{ssec:sometimes} at the arrangements that are sometimes \textbf{CN}, and Section~\ref{ssec:never} at the arrangements that are never \textbf{CN}.
In Section~\ref{ssec:discuss} we finish by discussing these results and compiling related questions.

\subsection{Always \textbf{CN} simplicial arrangements}
\label{ssec:always}
Fifty-five of the of $119$ arrangements are congruence normal, that is, for any choice of base region, the poset of regions is congruence normal, see Table~\ref{tab:alwayscn}. 

\begin{table}[H]
\resizebox{\textwidth}{!}{
\begin{tabular}{c|c|c|c|c|c}
\multicolumn{6}{c}{Finite Weyl Groupoids}\\\toprule[1pt]
\begin{tabular}{r@{\hspace{1mm}}l}
$\F_2(6)=$  & $\A(6,24)$\\
& $\A(7,32)$         \\
$\F_2(8)=$  & $\A(8,40)$\\
$\F_3(9)=$  & $\A(9,48)$\\
& $\A(10,60)_1$      \\
& $\A(10,60)_2$      \\
& $\A(11,72)$        \\
& $\A(12,84)_1$      \\
& $\A(12,84)_2$      \\
$\F_3(13)=$ & $\A(13,96)_1$\\
\end{tabular} &
\begin{tabular}{c}
$\A(13,96)_2$      \\
$\A(13,96)_3$      \\
$\A(14,112)_1$     \\
$\A(15,128)_1$     \\
$\A(16,144)_1$     \\
$\A(16,144)_2$     \\
$\A(17,160)_1$     \\
$\A(17,160)_2$     \\
$\A(17,160)_3$      \\
\phantom{}
\end{tabular} &
\begin{tabular}{l}
$\A(18,180)_1$      \\
$\A(18,180)_2$      \\
$\A(19,192)_1$      \\
$\A(19,192)_2$      \\
$\A(19,200)_1$      \\
$\A(19,200)_2$      \\
$\A(19,200)_3$      \\
$\A(20,216)$     \\
$\A(20,220)_1$      \\
\phantom{}
\end{tabular} &
\begin{tabular}{l}
$\A(20,220)_2$   \\
$\A(21,240)_1$   \\
$\A(21,240)_2$   \\
$\A(21,240)_3$   \\
$\A(22,264)_1$   \\
$\A(25,336)_1$   \\
$\A(25,336)_2$   \\
$\A(25,336)_3$   \\
$\A(25,336)_4$   \\
\phantom{}
\end{tabular} &
\begin{tabular}{l}
$\A(26,364)_1$   \\
$\A(26,364)_2$   \\
$\A(27,392)_1$   \\
$\A(27,392)_2$   \\
$\A(27,392)_3$   \\
$\A(28,420)_1$   \\
$\A(28,420)_2$   \\
$\A(28,420)_3$   \\
\phantom{}       \\
\phantom{}
\end{tabular} &
\begin{tabular}{l}
$\A(29,448)_1$   \\
$\A(29,448)_2$   \\
$\A(29,448)_3$   \\
$\A(30,476)$     \\
$\A(31,504)_1$   \\
$\A(31,504)_2$   \\
$\A(34,612)_1$   \\
$\A(37,720)_1$   \\
\phantom{}       \\
\phantom{}
\end{tabular}
\end{tabular}}
\begin{tabular}{c|c}
\multicolumn{2}{c}{Others}\\\toprule[1pt]
$H_3=\A(15,120)$ & $H_3^*=\A(31,480)$\\
\end{tabular}
\caption{List of congruence normal rank-3 simplicial arrangements}
\label{tab:alwayscn}
\end{table}

Fifty-three of these arrangements come from finite Weyl groupoids of rank~$3$~\cite{cuntz_finite_2012}. 
Finite Weyl groupoids correspond to (generalized) \emph{crystallographic} root systems.
In the present context, affine point configurations $\A_B^*$ play the role of these root systems.
A root system is \defn{crystallographic} if there exists a choice of normals $\{\n_i\}_{i\in[m]}$ for the hyperplanes such that for any base region, all normals are integral linear combinations of normals to the basic hyperplanes~\cite[Section~1]{cuntz_crystallographic_2011}.
Given a base region $B$, denote the set of rays of $\spa^+(\A_B^*)$ by $\Delta$ and call the elements of $\A_B^*$ the \defn{positive roots}.
A positive root $\p_i\in\A_B^*$ is \defn{constructible} if
\begin{align*}
\n_i\in\Delta \quad\text{or}\quad \n_i=\n_\alpha+\n_\beta,
\end{align*}
where $\alpha,\beta\in\A_B^*$.
We call $\A_B^*$ \defn{additive} if every positive root in $\A_B^*$ is constructible.
If $\A_B^*$ is additive, then it is possible to define the \defn{root poset} $(\A_B^*,\leq)$ by
\[
\p_i \leq \p_j \quad \Longleftrightarrow \quad \n_j - \n_i \in \N\Delta.
\]
The following is a fundamental result about finite Weyl groupoids.

\begin{theorem}[{\cite[Corollary~5.6]{cuntz_crystallographic_2011}} and {\cite[Theorem~2.10]{cuntz_finite_2012}}]
\label{thm:weyl_constr}
A simplicial arrangement $\A$ corresponds to a finite Weyl groupoid if and only if $\A_B^*$ is additive for every choice of base region~$B$.
\end{theorem}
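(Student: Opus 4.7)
The plan is to prove the two implications separately. The key tool is the interplay between crystallographic root systems of Weyl groupoids and the additivity property of the associated point configurations.

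For the forward direction, assume $\A$ corresponds to a finite Weyl groupoid $\mathcal{W}$. Fix a base region $B$ and identify $\A_B^*$ with the positive system of the crystallographic root system of $\mathcal{W}$ attached to $B$. I would invoke the Weyl groupoid analogue of the classical fact that for any non-simple positive root $\p_i$, there is a simple root $\alpha$ such that $\n_i-\n_\alpha$ is again a positive root. This yields a decomposition $\n_i = \n_\beta + \n_\alpha$ with $\beta, \alpha \in \A_B^*$, and induction on the height of $\p_i$ (well defined since $\mathcal{W}$ is finite) then establishes that every positive root is constructible, so $\A_B^*$ is additive. The argument rests on two features available for Weyl groupoids: simple reflections permuting positive roots up to one, and the existence of a height function, both standard in \cite{cuntz_crystallographic_2011, heckenberger_weyl_2006}.

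For the converse direction, assume $\A_B^*$ is additive for every choice of base region. The goal is to construct a Cartan graph whose attached Weyl groupoid has $\A_B^*$ as its root system. The objects of the graph are the base regions of $\A$, with morphisms given by reflections through shared facets. For each base region $B$ and each non-simple positive root $\p_i$, additivity supplies an expression $\n_i = \n_\alpha + \n_\beta$ with $\alpha, \beta \in \A_B^*$. Iterating this decomposition expresses every positive root as an $\N$-linear combination of the simple ones, which provides the entries of the Cartan matrix at the object $B$. Reflecting across a facet of $B$ changes the base region to an adjacent $B'$, and one then defines the connecting morphism in the Cartan graph from how the decompositions transform under this reflection. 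Finiteness of $\A$ ensures finiteness of the resulting groupoid. The core technical content is to verify that the data assembled this way satisfies the axioms of a Cartan graph, after which \cite[Theorem~2.10]{cuntz_finite_2012} and \cite[Corollary~5.6]{cuntz_crystallographic_2011} may be applied to conclude.

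The main obstacle lies in the converse direction. Additivity by itself only provides binary decompositions at each base region, whereas crystallographicity demands consistent integer coefficients globally, compatible with all reflections. The hardest step is verifying this global compatibility, namely that the integer data read off from additivity at different base regions patches into a genuine Cartan graph rather than a merely local structure. The hypothesis that additivity holds at \emph{every} base region is exactly what one would exploit: reflecting $B$ into $B'$ along a wall transforms the additive decompositions at $B$ into those at $B'$, and careful bookkeeping of how roots and simple roots change under reflection would show that the integral coefficients are preserved consistently, yielding the required Cartan graph structure.
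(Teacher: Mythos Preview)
The paper does not give its own proof of this statement; it is quoted directly from \cite[Corollary~5.6]{cuntz_crystallographic_2011} and \cite[Theorem~2.10]{cuntz_finite_2012} and used as a black box to derive Corollary~\ref{cor:cn_weyl}. So there is no in-paper argument to compare against, and your proposal is really a sketch of what those external references do.

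Your forward direction is a fair outline of the standard argument. The converse direction, however, has two genuine problems. First, you close by saying that once the Cartan-graph axioms are verified, \cite[Theorem~2.10]{cuntz_finite_2012} and \cite[Corollary~5.6]{cuntz_crystallographic_2011} ``may be applied to conclude''; but those are precisely the results constituting the theorem you are proving, so this is circular. Second, and more substantively, you assert that ``iterating this decomposition expresses every positive root as an $\N$-linear combination of the simple ones'', but additivity only gives a binary decomposition $\n_i=\n_\alpha+\n_\beta$ with $\alpha,\beta$ positive, and you have not explained why repeated decomposition terminates in simple roots (there is no height function available a priori), nor why the resulting integer coefficients are independent of the choices made. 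In fact, additivity as defined in the paper is a statement about a \emph{particular} normalisation of the normals $\n_i$, and the actual content of the cited references is to produce a single global choice of normals for which integrality holds at every chamber simultaneously. Your bookkeeping remark at the end gestures at this, but the passage from ``additive at every base region'' to ``one coherent integral structure'' is exactly the non-trivial step, and nothing in your sketch supplies it.
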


\begin{theorem}
\label{thm:constructible}
Let $\A$ be a tight hyperplane arrangement with respect to a base region $B$.
If $\A_B^*$ is additive, then~$\PosBA$ is congruence normal.
\end{theorem}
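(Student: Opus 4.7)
The plan is to use Corollary~\ref{cor:norm_unif} and Theorem~\ref{thm:charac_uniform} to reduce the statement to showing that the shard digraph $\Sh_B(\A)$ is acyclic, and then to exhibit a strictly increasing function on its vertices by ranking shards according to the root poset on $\A_B^*$ provided by additivity.

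First, I would unpack what an arrow $\Sigma^i \to \Sigma^j$ of $\Sh_B(\A)$ says about the points $\p_i, \p_j \in \A_B^*$. By Definition~\ref{def:shard_dig}, such an arrow forces $H_i \in \pre(H_j)$, i.e.\ $H_i$ is basic in the rank-$2$ subarrangement $\A|_{i,j}$ while $H_j$ is not. Using Lemma~\ref{lem:lines_ii}, this translates to a geometric statement on the line $\ell \in \Lines(\A_B^*)$ through $\p_i$ and $\p_j$: the point $\p_i$ is a vertex of $\conv(\ell)$ while $\p_j$ lies in the relative interior of $\conv(\ell)$. Writing $\p_k$ for the other vertex, we have $\p_j = \lambda \p_i + (1-\lambda) \p_k$ with $0 < \lambda < 1$, and rescaling back to the normals gives $\n_j = c_i \n_i + c_k \n_k$ with both coefficients strictly positive.

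Second, I would promote this to a statement about the root poset using additivity. Here I would appeal to the fact that every rank-$2$ sub-root-system of an additive configuration is itself crystallographic with simple roots equal to the two extreme normals on the line, so the coefficients $c_i, c_k$ are in fact positive integers. Additivity further implies $\n_i, \n_k \in \N \Delta$, whence
\[
\n_j - \n_i = (c_i - 1)\n_i + c_k \n_k \in \N\Delta,
\]
and this vector is nonzero because $c_k \geq 1$. This is exactly the assertion $\p_i < \p_j$ in the root poset of $\A_B^*$. Defining $\operatorname{ht}(\p)$ to be the sum of coefficients when $\n$ is expanded in the simple roots of $\A_B^*$ then produces a function on $\A_B^*$ that strictly increases along every arrow of $\Sh_B(\A)$, ruling out directed cycles and completing the proof via Theorem~\ref{thm:charac_uniform}.

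The main obstacle will be the second step: justifying carefully that the coefficients $c_i, c_k$ are positive \emph{integers} rather than arbitrary positive reals, so that the height on the root poset actually jumps when passing from $\p_i$ to $\p_j$. This amounts to the claim that every rank-$2$ sub-configuration of an additive $\A_B^*$ inherits additivity, with simple roots given by the two extreme points on the line. Such a fact is standard in the finite Weyl groupoid literature, but it is not quite immediate from the abstract notion of additivity stated in the excerpt, and will likely require invoking the equivalence between additivity and crystallographicity (Theorem~\ref{thm:weyl_constr}) together with a close inspection of the possible rank-$2$ crystallographic root systems.
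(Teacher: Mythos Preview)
Your approach coincides with the paper's at the structural level: both argue that additivity of $\A_B^*$ forces the directed graph $\H_B(\A)$ on hyperplanes (and hence $\Sh_B(\A)$, whose arrows project to arrows of $\H_B(\A)$) to be acyclic, and then invoke Theorem~\ref{thm:charac_uniform}. The paper phrases this contrapositively and much more tersely: it simply asserts that a cycle in $\H_B(\A)$ would give a cycle in the order defining the root poset, which is impossible when $\A_B^*$ is additive. Your height-function argument is an explicit unpacking of the same idea, and your observation that the relevant function really lives on hyperplanes (not shards) matches the paper's choice to work with $\H_B(\A)$ rather than $\Sh_B(\A)$.

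You are right to flag the passage from ``$H_i$ cuts $H_j$'' to ``$\p_i < \p_j$ in the root poset'' (equivalently $\operatorname{ht}(\p_i)<\operatorname{ht}(\p_j)$) as the crux; this is exactly the step the paper leaves implicit. It amounts to knowing that on each line $\ell$ the interior normals are positive \emph{integer} combinations of the two extreme normals, i.e.\ that rank-$2$ restrictions inherit a crystallographic structure with simple roots at the endpoints. One caution about your proposed resolution: Theorem~\ref{thm:weyl_constr} characterizes arrangements that are additive for \emph{every} base region, whereas Theorem~\ref{thm:constructible} hypothesizes additivity only for the fixed $B$, so invoking the equivalence with finite Weyl groupoids overshoots the stated hypothesis. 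This is harmless for the intended application (Corollary~\ref{cor:cn_weyl}), but for the theorem exactly as stated you would need to extract the rank-$2$ integrality directly from additivity at the single base region rather than from global crystallographicity.
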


\begin{proof}
Via the contrapositive statement, having a cycle in the graph $\H_B(\A)$ is a necessary condition for $\PosBA$ \emph{not} to be congruence normal.
Such a cycle between hyperplanes yields a cycle in the order defining the root poset of $\A_B^*$.
Hence, when $\PosBA$ is not congruence normal, the positive roots $\A_B^*$ do not lead to a root poset.
Thus $\A_B^*$ can not be additive.
\end{proof}

Theorem~\ref{thm:weyl_constr} leads directly to the following corollary, which provides a new proof that finite Coxeter arrangements are congruence normal \cite[Theorem~6]{caspard_cayley_2004}.

\begin{corollary}
\label{cor:cn_weyl}
Let $\A$ be the hyperplane arrangement of a finite Weyl groupoid $\mathcal{W}$.
For any choice of base region~$B$, the lattice of regions $\PosBA$ is congruence normal.
\end{corollary}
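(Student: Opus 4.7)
The plan is to assemble this result directly from the two theorems that immediately precede it. The key observation is that the corollary is essentially the concatenation of Theorem~\ref{thm:weyl_constr} (the ``if'' direction) with Theorem~\ref{thm:constructible}, once we know that $\A$ is tight with respect to every base region.

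First, I would verify the tightness hypothesis needed to apply Theorem~\ref{thm:constructible}. Hyperplane arrangements coming from finite Weyl groupoids are simplicial by construction (the normals form the positive roots of a crystallographic root system, whose chambers are simplicial cones). By Lemma~\ref{lem:bez}(ii), simpliciality forces tightness with respect to every base region $B$, and by Lemma~\ref{lem:bez}(i), $\PosBA$ is therefore a lattice for every choice of base region $B$.

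Next, I would fix an arbitrary base region $B$ and apply Theorem~\ref{thm:weyl_constr} (``only if'' direction): since $\A$ comes from a finite Weyl groupoid, the dual configuration $\A_B^*$ is additive, i.e., every positive root is constructible, so the root poset $(\A_B^*,\leq)$ is well-defined. Now Theorem~\ref{thm:constructible} applies verbatim: $\A$ is tight with respect to $B$, and $\A_B^*$ is additive, hence $\PosBA$ is congruence normal. Since $B$ was arbitrary, $\A$ is congruence normal in the sense of Section~\ref{ssec:cong_normal}.

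There is no real obstacle here; the corollary is a formal consequence of the two cited theorems, and essentially the only thing to check is that the simpliciality (and thus tightness) hypothesis of Theorem~\ref{thm:constructible} is automatic for arrangements of finite Weyl groupoids. The new content that this corollary packages is, via Corollary~\ref{cor:norm_unif}, that these lattices of regions are moreover congruence uniform, yielding the promised alternative proof of \cite[Theorem~6]{caspard_cayley_2004} for finite Coxeter arrangements as the special case where the Weyl groupoid is a Weyl group.
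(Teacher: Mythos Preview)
Your proposal is correct and matches the paper's approach exactly: the corollary is stated as a direct consequence of Theorem~\ref{thm:weyl_constr} (giving additivity of $\A_B^*$ for every $B$) together with Theorem~\ref{thm:constructible}, with simpliciality (hence tightness via Lemma~\ref{lem:bez}) supplying the remaining hypothesis. The paper does not spell out these steps, so your write-up is if anything more detailed than the original.
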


There are two additional \textbf{CN} arrangements that do not stem from finite Weyl groupoids.
Arrangement $\A(15,120)$ is the Coxeter arrangement for the Coxeter group $H_3$ and arrangement $\A(31,480)$ is its point-line dual.
As discussed in \cite{cuntz_root_2015}, there is a root poset for $H_3$ supporting the fact that its arrangement is always congruence normal.
The dual arrangement $\A(31,480)$ is also always congruence normal, as we verified directly.
Is there a proof of congruence normality for $\A(31,480)$ using duality with $H_3$?
Example~\ref{ex:cycle_in_H} shows that having a root poset structure on $\A_B^*$ \emph{is not} necessary for $\PosBA$ to be congruence normal.

\subsection{Simplicial arrangements that are sometimes congruence normal}
\label{ssec:sometimes}
Sixty-one of the $119$ arrangements are congruence normal for some base regions and not congruence normal for others, see Table~\ref{tab:sometimescn}.
Among them is the arrangement $\A(10,60)_3$ which appeared in Example~\ref{ex:A10_60}.

\begin{table}[H]
\resizebox{\textwidth}{!}{
\begin{tabular}{c|c|c}
\begin{tabular}{r@{\hspace{1mm}}lrr}
\multicolumn{2}{c}{Name} & \textbf{CN} & \textbf{NCN} \\\toprule[1pt]  
$\F_2(10)=$ & $\A(10,60)_3$   & $40$  & $20$     \\
$\F_2(12)=$ & $\A(12,84)_3$   & $36$  & $48$     \\
            & $\A(13,104)$    & $24$  & $80$     \\
$\F_2(14)=$ & $\A(14,112)_2$  & $28$  & $84$     \\
            & $\A(14,112)_3$  & $72$  & $40$     \\
            & $\A(14,116)$    & $40$  & $76$     \\
            & $\A(15,128)_2$  & $72$  & $56$     \\
            & $\A(15,132)_1$  & $60$  & $72$     \\
            & $\A(15,132)_2$  & $48$  & $84$     \\
            & $\A(16,140)$    & $120$ & $20$     \\
$\F_2(16)=$ & $\A(16,144)_3$  & $32$  & $112$    \\
            & $\A(16,144)_4$  & $84$  & $60$     \\
            & $\A(16,144)_5$  & $108$ & $36$     \\
            & $\A(16,148)$    & $52$  & $96$     \\
$\F_3(17)=$ & $\A(17,160)_4$  & $96$  & $64$     \\
            & $\A(17,160)_5$  & $120$ & $40$     \\
            & $\A(17,164)$    & $76$  & $88$     \\
            & $\A(17,168)_1$  & $48$  & $120$    \\
            & $\A(17,168)_2$  & $48$  & $120$    \\
$\F_2(18)=$ & $\A(18,180)_3$  & $36$  & $144$    \\
            & $\A(18,180)_4$  & $84$  & $96$     \\
\end{tabular}&
\begin{tabular}{r@{\hspace{1mm}}lrr}
\multicolumn{2}{c}{Name} & \textbf{CN} & \textbf{NCN} \\\toprule[1pt]  
            & $\A(18,180)_5$  & $120$ & $60$     \\
            & $\A(18,180)_6$  & $120$ & $60$     \\
            & $\A(18,184)_1$  & $100$ & $84$     \\
            & $\A(18,184)_2$  & $72$  & $112$    \\
            & $\A(19,200)_4$  & $120$ & $80$     \\
            & $\A(19,204)$    & $72$  & $132$    \\
$\F_2(20)=$ & $\A(20,220)_3$  & $40$  & $180$    \\
            & $\A(20,220)_4$  & $120$ & $100$    \\
$\F_3(21)=$ & $\A(21,240)_4$  & $80$  & $160$    \\
            & $\A(21,240)_5$  & $120$ & $120$    \\
            & $\A(21,248)$    & $88$  & $160$    \\
            & $\A(21,252)$    & $36$  & $216$    \\
$\F_2(22)=$ & $\A(22,264)_2$  & $44$  & $220$ \\
            & $\A(22,264)_3$  & $168$ & $96$     \\
            & $\A(22,276)$    & $60$  & $216$    \\
            & $\A(23,296)$    & $112$ & $184$    \\
            & $\A(23,304)$    & $8$   & $296$    \\
            & $\A(24,304)$    & $112$ & $192$    \\
$\F_2(24)=$ & $\A(24,312)$    & $48$  & $264$ \\
            & $\A(24,316)$    & $184$ & $132$    \\
\phantom{}
\end{tabular}&
\begin{tabular}{r@{\hspace{1mm}}lrr}
\multicolumn{2}{c}{Name} & \textbf{CN} & \textbf{NCN} \\\toprule[1pt]  
            & $\A(24,320)$    & $24$  & $296$    \\
            & $\A(25,320)$    & $288$ & $32$     \\
$\F_3(25)=$ & $\A(25,336)_5$  & $48$  & $288$ \\
            & $\A(25,336)_6$  & $48$  & $288$    \\
$\F_2(26)=$ & $\A(26,364)_3$  & $52$  & $312$ \\
            & $\A(26,380)$    & $20$  & $360$    \\
            & $\A(27,400)$    & $48$  & $352$    \\
$\F_2(28)=$ & $\A(28,420)_4$  & $56$  & $364$ \\
            & $\A(28,420)_5$  & $84$  & $336$    \\
            & $\A(28,420)_6$  & $84$  & $336$    \\
            & $\A(29,440)$    & $136$ & $304$    \\
$\F_3(29)=$ & $\A(29,448)_4$  & $56$  & $392$ \\
            & $\A(30,460)$    & $240$ & $220$    \\
$\F_2(30)=$ & $\A(30,480)$    & $60$  & $420$ \\
$\F_2(32)=$ & $\A(32,544)$    & $64$  & $480$ \\
$\F_3(33)=$ & $\A(33,576)$    & $64$  & $512$ \\
$\F_2(34)=$ & $\A(34,612)_2$  & $68$  & $544$ \\
$\F_2(36)=$ & $\A(36,684)$    & $72$  & $612$ \\
$\F_3(37)=$ & $\A(37,720)_2$  & $72$  & $648$ \\
            & $\A(37,720)_3$  & $96$  & $624$    \\
\phantom{}
\end{tabular}
\end{tabular}}
\caption{Simplicial arrangements that are sometimes congruence normal}
\label{tab:sometimescn}
\end{table}

Reading proved that the poset of regions of a supersolvable hyperplane arrangements is congruence normal with respect to a canonical base region~\cite[Theorem~1]{reading_lattice_2003}.
In rank~$3$, the infinite families are exactly the irreducible supersolvable ones \cite[Theorem~1.2]{cuntz_supersolvable_2019}.
However, we show below that~$\F_2(m)$ with $m\geq 10$ and $\F_3(m)$ with $m\geq 17$ always have a base region for which the associated lattice of regions is not congruence normal.

\subsection{Congruence normality for the infinite families}
\label{ssec:infinite}
There are three infinite families of rank-$3$ simplicial hyperplane arrangements \cite{grunbaum_1971}.
The first family, $\F_1(m)$ with $m \geq 3$ is the family of \defn{near-pencils} in the projective plane with $m$ hyperplanes.
The second family, $\F_2(m)$, for even $m \geq 6$ consists of the hyperplanes defined by the edges of the regular $\frac{m}{2}$-gon and each of its $\frac{m}{2}$ lines of symmetry.
The third family, $\F_3(m)$, for $m=4k+1$, $k\geq2$, is obtained from $\F_2(m-1)$ by adding the line at infinity.
Examples of these families are illustrated in Figure~\ref{fig:infinite_families}.

\begin{figure}[H]
\begin{center}
\begin{tabular}{c@{\hspace{2cm}}c@{\hspace{2.5cm}}c }
\begin{tikzpicture}
	[baseline = (D),
	 point/.style={circle,fill=black,inner sep=1.5pt},
	 line/.style={black,line width=.5pt}]
\def \width{2}
\def \height{2}
\coordinate (A) at (0,0);
\coordinate (B) at (-1/2*\width,2/3*\height);
\coordinate (C) at (1/2*\width,2/3*\height);
\coordinate (D) at (1/2*\width,1/3*\height);
\draw[line] (B) -- (C);
\draw[line] (A) -- +(90:\height);
\draw[line] (A) -- +(-90:1/4*\height);
\draw[line] (A) -- +(105:\height);
\draw[line] (A) -- +(-75:1/4*\height);
\draw[line] (A) -- +(75:\height);
\draw[line] (A) -- +(-105:1/4*\height);
\draw[line] (A) -- +(60:\height);
\draw[line] (A) -- +(-120:1/4*\height);
\draw[line] (A) -- +(120:\height);
\draw[line] (A) -- +(-60:1/4*\height);
\node[label=below:{$\F_1(6)$}] at (0,-7/24*\height) {};
\end{tikzpicture}
&
\begin{tikzpicture}
	[baseline = (A)
	 point/.style={circle,fill=black,inner sep=1.5pt},
	 line/.style={black,line width=.5pt}]
\def \width{3}
\def \height{3}
\coordinate (A) at (0,0);
\path (A) -- +(0:1/7*\width) coordinate (v1) ;
\path (A) -- +(72:1/7*\width) coordinate (v2);
\path (A) -- +(144:1/7*\width) coordinate (v3);
\path (A) -- +(216:1/7*\width) coordinate (v4);
\path (A) -- +(288:1/7*\width) coordinate (v5);
\draw[line, shorten  <=-10*\width, shorten >=-10*\width] (v1) -- (v2);
\draw[line, shorten  <=-10*\width, shorten >=-10*\width] (v2) -- (v3);
\draw[line, shorten  <=-10*\width, shorten >=-10*\width] (v3) -- (v4);
\draw[line, shorten  <=-10*\width, shorten >=-10*\width] (v4) -- (v5);
\draw[line, shorten  <=-10*\width, shorten >=-10*\width] (v5) -- (v1);
\draw[line, shorten  <=-13*\width, shorten >=-10*\width] (A) -- (v1);
\draw[line, shorten  <=-13*\width, shorten >=-10*\width] (A) -- (v2);
\draw[line, shorten  <=-13*\width, shorten >=-10*\width] (A) -- (v3);
\draw[line, shorten  <=-13*\width, shorten >=-10*\width] (A) -- (v4);
\draw[line, shorten  <=-13*\width, shorten >=-10*\width] (A) -- (v5);
\node[label=below:{$\F_2(10) = \A(10,60)_3$}] at (0,-5/12*\height) {};
\end{tikzpicture}
&
\begin{tikzpicture}
	[baseline = (A),
	 point/.style={circle,fill=black,inner sep=1.5pt},
	 line/.style={black,line width=.5pt}]
 \def \width{2.5}
 \def \height{2.5}
\coordinate (A) at (0,0);
\path (A) -- +(45:1/5*\width) coordinate (v1) ;
\path (A) -- +(135:1/5*\width) coordinate (v2);
\path (A) -- +(225:1/5*\width) coordinate (v3);
\path (A) -- +(315:1/5*\width) coordinate (v4);
\path (A) -- +(0:1/5*\width) coordinate (e1);
\path (A) -- +(90:1/5*\width) coordinate (e2);
\path (A) -- +(180:1/5*\width) coordinate (e3);
\path (A) -- +(270:1/5*\width) coordinate (e4);
\draw[line, shorten  <=-10*\width, shorten >=-10*\width] (v1) -- (v2);
\draw[line, shorten  <=-10*\width, shorten >=-10*\width] (v2) -- (v3);
\draw[line, shorten  <=-10*\width, shorten >=-10*\width] (v3) -- (v4);
\draw[line, shorten  <=-10*\width, shorten >=-10*\width] (v4) -- (v1);
\draw[line, shorten  <=-15*\width, shorten >=-10*\width] (A) -- (e1);
\draw[line, shorten  <=-15*\width, shorten >=-10*\width] (A) -- (e2);
\draw[line, shorten  <=-15*\width, shorten >=-10*\width] (A) -- (v1);
\draw[line, shorten  <=-15*\width, shorten >=-10*\width] (A) -- (v2);
\node[label=above:{$\infty$}] at (1/2*\width,1/2*\height) {};
\node[label=below:{$\F_3(9) = \A(9,48)$}] at (0,-6/12*\height) {};
\end{tikzpicture}
\end{tabular}
\end{center}
\caption{Arrangements from the three infinite families of simplicial arrangements of rank $3$ drawn in the projective plane}
\label{fig:infinite_families}
\end{figure}
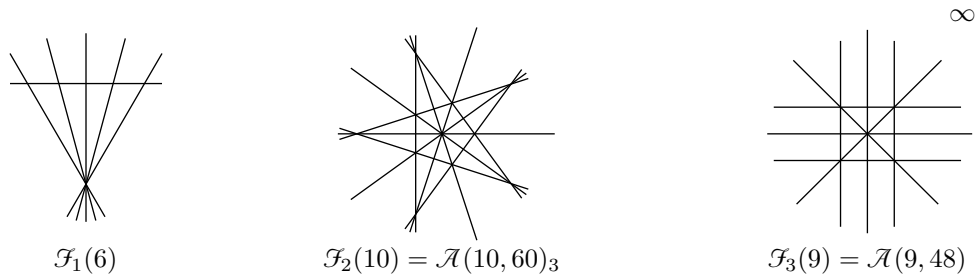

\begin{theorem}
\label{thm:near_pencils}
The near-pencil arrangements of $\F_1(m)$ are congruence normal.
\end{theorem}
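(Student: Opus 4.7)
The plan is to invoke Reading's characterization of congruence normality through acyclicity of the shard digraph (Corollary~\ref{cor:norm_unif} combined with Theorem~\ref{thm:charac_uniform}). Near-pencils are simplicial, hence tight by Lemma~\ref{lem:bez}, so each $\PosBA$ is a lattice and it suffices to prove that $\Sh_B(\A)$ is acyclic for every base region $B$.

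Write $\F_1(m) = \{H_1,\dots,H_m\}$ with $H_1,\dots,H_{m-1}$ sharing a common line $L$ and $H_m$ transverse to $L$. First I would identify the rank-$2$ subarrangements using Lemma~\ref{lem:lines}: in $\A_B^*$ the points $\p_1,\dots,\p_{m-1}$ lie on a line $\ell$ and $\p_m$ lies off $\ell$, so the lines of $\A_B^*$ are $\ell$ together with the $m-1$ two-point lines through $\p_m$. Correspondingly, the rank-$2$ subarrangements are the full pencil $\{H_1,\dots,H_{m-1}\}$ and the $m-1$ trivial pairs $\{H_i, H_m\}$. By Lemma~\ref{lem:lines_ii}, both hyperplanes in each pair $\{H_i,H_m\}$ are basic, and inside the pencil exactly two hyperplanes $H_{b_1}, H_{b_2}$---those whose dual points are the endpoints of $\conv(\ell)$---are basic. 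Reading off the $\pre$-sets then gives $\pre(H_m) = \pre(H_{b_1}) = \pre(H_{b_2}) = \varnothing$, while $\pre(H_i) = \{H_{b_1}, H_{b_2}\}$ for every non-basic pencil hyperplane $H_i$.

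Next I would describe the shards and the shard digraph. The hyperplanes $H_m$, $H_{b_1}$ and $H_{b_2}$ each form a single shard, while any non-basic pencil hyperplane $H_i$ is cut by both $H_{b_1}$ and $H_{b_2}$ along the same codimension-$2$ subspace $L$, and thus splits into the two closed half-planes of $H_i$ bounded by $L$. By Definition~\ref{def:shard_dig}, an arrow $\Sigma^i \to \Sigma^j$ in $\Sh_B(\A)$ requires $H_i \in \pre(H_j)$, so every arrow originates at a shard of $H_{b_1}$ or $H_{b_2}$ and terminates at a shard on a non-basic pencil hyperplane, while the shard on $H_m$ is isolated. The digraph $\Sh_B(\A)$ is therefore bipartite with all arrows pointing from one side to the other, and in particular acyclic. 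Theorem~\ref{thm:charac_uniform} together with Corollary~\ref{cor:norm_unif} then yields congruence normality of $\PosBA$.

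The argument is essentially bookkeeping of $\pre$-sets; the only real subtlety is noting that all cuts inside the pencil coincide with $L$, which is what forces the bipartite shape of $\Sh_B(\A)$ regardless of the choice of $B$.
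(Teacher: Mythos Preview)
Your proof is correct and follows essentially the same approach as the paper: both arguments show that every directed path in $\Sh_B(\A)$ has length at most one by observing that only the single nontrivial rank-$2$ subarrangement (the pencil) produces any cutting relations. You have simply written out in detail what the paper states in one sentence, explicitly identifying the basic hyperplanes and the bipartite structure of the shard digraph.
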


\begin{proof}
There is exactly one rank-$2$ subarrangement with at least three hyperplanes.
Thus, for any choice of base region, the length of any path in the directed graph on shards is at most one, so there are no cycles.
\end{proof}

\begin{theorem}\label{thm:cn_f2}
The second family $\F_2(m)$ is sometimes congruence normal for $m \geq 10$. 
\end{theorem}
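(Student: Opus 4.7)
The plan is to establish both halves of the claim: that for every even $m \geq 10$, $\F_2(m)$ admits at least one base region whose poset of regions is congruence normal, and at least one base region whose poset of regions is not congruence normal.

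For the congruence normal direction, I would invoke that $\F_2(m)$ is supersolvable by \cite[Theorem~1.2]{cuntz_supersolvable_2019} and apply Reading's theorem \cite[Theorem~1]{reading_lattice_2003}: for any supersolvable hyperplane arrangement, the canonical base region yields a congruence uniform (hence congruence normal) poset of regions. This half of the claim follows immediately.

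For the non-congruence normal direction, I would generalize Example~\ref{ex:A10_60} (which handles $m=10$) to arbitrary $n = m/2 \geq 5$ by exhibiting a base region $B$ of $\F_2(m)$ whose shard digraph $\Sh_B(\A)$ contains a four-cycle analogous to $\Theta^8 \to \Upsilon^9 \to \Sigma^6 \to \Xi^{10} \to \Theta^8$ from Example~\ref{ex:cycle_in_shards}. Concretely, I would pick $B$ to lie in a narrow cone aligned with five consecutive vertices of the regular $n$-gon and identify four hyperplanes of $\F_2(m)$ playing the roles of $H_6, H_8, H_9, H_{10}$, together with four line covectors $\hypint^{\ell_1}, \dots, \hypint^{\ell_4}$ witnessing the four cutting relations of the cycle. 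The cycle is then certified by four applications of Corollary~\ref{cor:covector_forcing}, which combined with Theorem~\ref{thm:charac_uniform} and Corollary~\ref{cor:norm_unif} produces the desired non-congruence normal poset of regions.

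The main obstacle is controlling the extra edges and symmetry axes of $\F_2(m)$ that lie outside the five-vertex local picture. These contribute only additional ``$*$'' entries to the involved restricted covectors, which the intersection operation absorbs; hence the four covector identities $\hypint^{\ell_t} \cap \sigma^{i_t} \cap \theta^{j_t} = \hypint^{\ell_t}$ are preserved provided no extra hyperplane enlarges one of the four rank-$2$ subarrangements $\A|_{\ell_t}$. This reduces to a finite geometric check on the regular $n$-gon: no extra edge or symmetry axis may pass through any of the four distinguished codimension-$2$ intersection points used in the cycle. I expect this to follow from a short case analysis based on the parity of $n$ and a few small residue classes, exploiting the concurrence of all $n$ symmetry axes at the center of the polygon to rule out accidental coincidences.
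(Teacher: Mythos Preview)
Your proposal is correct and follows essentially the same two-part strategy as the paper: supersolvability plus Reading's theorem for the congruence normal direction, and an explicit four-cycle in the shard digraph for the non-congruence normal direction. The only difference is cosmetic---the paper argues the four-cycle directly from a projective picture of the hexagon case $m=12$ (labeling edge and reflection hyperplanes $\mathbf{e}_i,\mathbf{r}_i$ and reading off the cuts at four intersection points) and then asserts that the same pattern persists for $m\geq 14$, whereas you propose to certify the cycle via the covector formalism of Corollary~\ref{cor:covector_forcing}; your worry about extra hyperplanes enlarging the relevant rank-$2$ subarrangements is legitimate but, as in the paper, is handled by the rigid combinatorics of the regular $n$-gon rather than any delicate case analysis.
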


\begin{proof}
In rank~$3$, the infinite families are exactly the irreducible supersolvable ones, thus there exists a canonical choice of base region such that the poset of regions is congruence normal \cite[Theorem~1.2]{cuntz_supersolvable_2019}.
On the other hand, with respect to a certain choice of base region, there is a guaranteed four-cycle in the shards as demonstrated in Figure~\ref{fig:hexagon}.
The figure shows the arrangement on two projective planes and how some of the hyperplanes intersect at infinity.
Let the base region be bounded by $\mathbf{e}_1$, $\mathbf{e}_2$, and $\mathbf{r}_2$. 
At point~$1$, the hyperplane $\mathbf{e}_5$ is cut by~$\mathbf{r}_5$.
At point~$2$, the hyperplane $\mathbf{r}_6$ is cut by $\mathbf{e}_5$.
At point~$3$, the hyperplane $\mathbf{e}_4$ is cut by $\mathbf{r}_6$.
At point~$4$, the hyperplane $\mathbf{r}_5$ is cut by $\mathbf{e}_4$.
Thus there is a cycle in the shard digraph.
Adapting this procedure when $m\geq14$ similarly provides a $4$-cycle for every member of $\F_2(m)$.
\end{proof}

\begin{figure}[H]
\begin{center}
\resizebox{\textwidth}{!}{
\begin{tikzpicture}
\node at (0,0) {\includegraphics[width=\textwidth]{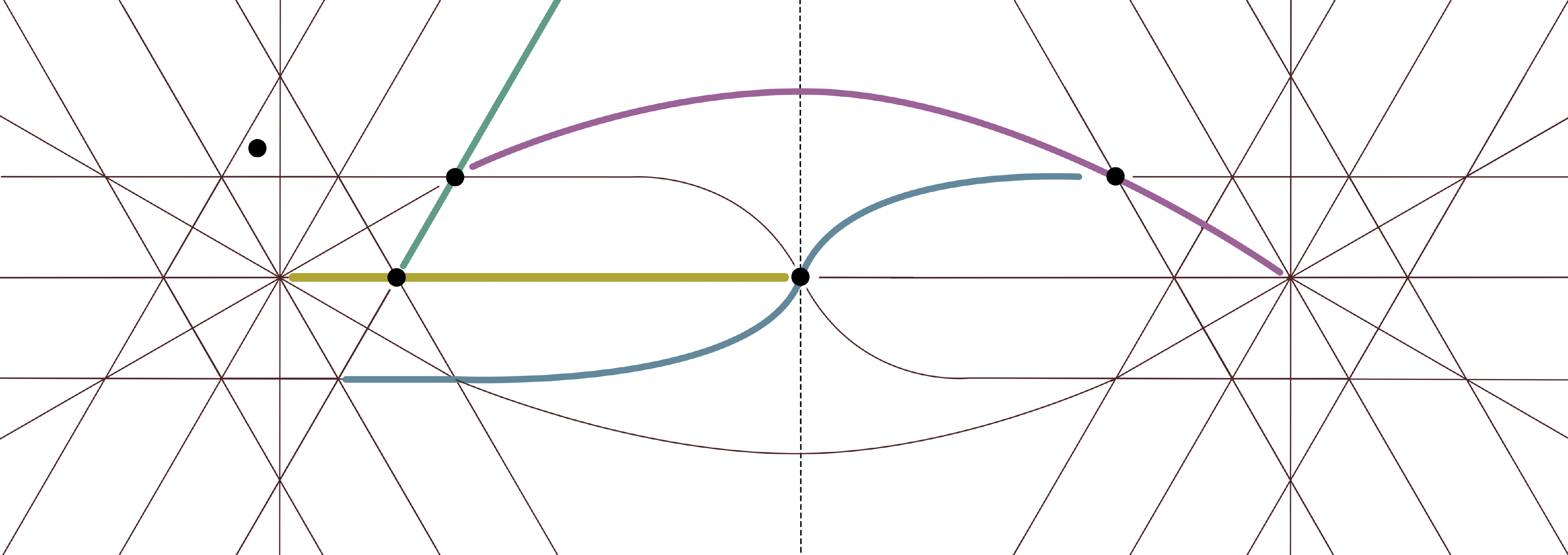}};
\node at (-1,1.2) {e$_1$};
\node at (-1,.2) {r$_5$};
\node at (-1,2) {r$_6$};
\node at (-1,-1.1) {e$_4$};
\node at (-1,-2.3) {r$_4$};
\node at (-2.1,-2.5) {e$_6$};
\node at (-3.3,-2.5) {r$_3$};
\node at (-4.4,-2.5) {e$_3$};
\node at (-5,-2.5) {r$_2$};
\node at (-5.6,-2.5) {e$_5$};
\node at (-6.2,-2.5) {r$_1$};
\node at (-7.3,-2.5) {e$_2$};
\node at (-3.8,-.3) {1};
\node at (-3.5,1.3)  {2};
\node at (3.4,1.3)   {3};
\node at (.5,-.2)  {4};
\node at (.5,2.7)    {$\infty$};
\end{tikzpicture}}
\end{center}
\caption{The simplicial hyperplane arrangement $\A(12,84)_3$ from $\F_2$ whose lattice of regions with the marked base region is not congruence normal}
\label{fig:hexagon}
\end{figure}

\begin{theorem}\label{thm:cn_f3}
The third family $\F_3(m)$ is sometimes congruence normal for $m \geq 17$. 
\end{theorem}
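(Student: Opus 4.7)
The plan is to mirror the structure of the proof of Theorem~\ref{thm:cn_f2}, handling the two halves of ``sometimes'' separately. For the existence of a base region yielding a congruence normal poset of regions, I would invoke the same machinery: by \cite[Theorem~1.2]{cuntz_supersolvable_2019}, the family $\F_3(m)$ is irreducible supersolvable, so Reading's result \cite[Theorem~1]{reading_lattice_2003} supplies a canonical base region whose poset of regions is congruence normal.

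The substantive direction is exhibiting a base region whose shard digraph contains a cycle, which by Theorem~\ref{thm:charac_uniform} and Corollary~\ref{cor:norm_unif} produces a non-congruence-normal lattice of regions. My strategy is to transport the $4$-cycle built for $\F_2(m-1)$ in the proof of Theorem~\ref{thm:cn_f2} into $\F_3(m)$. Recall that $\F_3(m)$ is obtained from $\F_2(m-1)$ by adjoining the line at infinity $H_\infty$, and for $m\geq 17$ we have $m-1\geq 16$, so the four-cycle construction of Theorem~\ref{thm:cn_f2} is available in the underlying $\F_2(m-1)$: there exist hyperplanes $\mathbf{e}_5,\mathbf{r}_6,\mathbf{e}_4,\mathbf{r}_5$ and intersection points $1,2,3,4$ producing a shard cycle with respect to a suitable base region $B'$ of $\F_2(m-1)$.

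The main step is to show that adding $H_\infty$ preserves this cycle for a properly chosen base region $B$ of $\F_3(m)$. I would argue as follows. First, pick $B$ to be the unique region of $\F_3(m)$ contained in $B'$ and lying on the designated side of $H_\infty$. Second, verify that $H_\infty$ does not pass through any of the four points $1,2,3,4$; this follows because each of these points arises as an intersection of two non-parallel lines of symmetry/edges in the regular $\frac{m-1}{2}$-gon picture, and $H_\infty$ meets only pairs of parallel edges at infinity. Consequently, each of the four rank-$2$ subarrangements $\A|_{i,j}$ used in the cycle is unchanged, and by Lemma~\ref{lem:lines_ii} the basic hyperplanes at these points are determined solely by which side of each involved hyperplane $B$ lies on—which is inherited from $B'$. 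Hence the four cutting relations $\mathbf{r}_5\to\mathbf{e}_5$, $\mathbf{e}_5\to\mathbf{r}_6$, $\mathbf{r}_6\to\mathbf{e}_4$, $\mathbf{e}_4\to\mathbf{r}_5$ persist in $\H_B(\F_3(m))$.

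The hard part will be lifting the cutting relations to an actual cycle in the shard digraph $\Sh_B(\F_3(m))$ rather than in $\H_B(\F_3(m))$—recall from Example~\ref{ex:cycle_in_H} that a cycle among hyperplanes need not yield a cycle among shards. To handle this, I would use Corollary~\ref{cor:covector_forcing}: the shards $\Sigma^{\mathbf{e}_5},\Sigma^{\mathbf{r}_6},\Sigma^{\mathbf{e}_4},\Sigma^{\mathbf{r}_5}$ used in Theorem~\ref{thm:cn_f2} intersect pairwise in codimension~$2$ precisely because their shard covectors intersect to a line covector. Adding $H_\infty$ only extends each shard covector $\sigma$ by one additional coordinate $\sigma_\infty\in\{+,-,*\}$; so long as each pair of consecutive shards in the cycle admits an extension with matching signs at the $\infty$-entry compatible with the chosen line covector (which again is possible because $H_\infty$ avoids the four pivot points), the codimension-$2$ intersections survive. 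A careful case analysis in a $2k$-gon for $k\geq 4$, paralleling the picture in Figure~\ref{fig:hexagon} but with the additional line at infinity drawn in, will confirm that such a sign extension exists, completing the proof.
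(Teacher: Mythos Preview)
Your handling of the ``some base region is CN'' half is fine and matches the paper. The gap is in the other half, and it is a factual one: you assert that $H_\infty$ misses all four pivot points of the $\F_2(m-1)$ cycle, but the paper states the opposite---``one of the intersection points in the cycle occurs in a rank-2 subarrangement that includes the hyperplane at infinity.'' In the $2k$-gon picture underlying the cycle of Theorem~\ref{thm:cn_f2}, one of the four cuttings takes place where the two involved lines are parallel in the affine chart and hence meet on the line at infinity; this is exactly why Figure~\ref{fig:hexagon} is drawn on two projective charts glued along the symbol~$\infty$. Your justification that ``each of these points arises as an intersection of two non-parallel lines'' is an unchecked assertion that is false for the cycle actually used.

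This single error brings down the rest of your transport argument. At the pivot point lying on $H_\infty$, the rank-2 subarrangement in $\F_3(m)$ strictly contains the one in $\F_2(m-1)$, so the basic hyperplanes there---and with them the cutting relation in $\H_B$---cannot be inherited from $B'$ and must be re-derived for the chosen base region of $\F_3(m)$. Your covector lifting likewise fails at that point: the line covector $\hypint^\ell$ has $\hypint^\ell_\infty=0$ since $H_\infty\in\ell$, and if $H_\infty$ is basic there then $H_\infty\in\pre$ of one of the cycle hyperplanes, inserting a new non-$*$ entry into the relevant shard covector and possibly changing which shard you are tracking. The paper's remedy is not avoidance but direct verification: it exhibits a base region (Figure~\ref{fig:octagon} for $m=17$) and checks that the same combinatorial pattern of four cuttings, now with one rank-2 subarrangement containing $H_\infty$, still yields a $4$-cycle in $\Sh_B(\F_3(m))$. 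Your sketch would be repaired by dropping the avoidance claim and carrying out this direct check at the point that involves the line at infinity.
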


\begin{proof}
The proof is similar to that of Theorem~\ref{thm:cn_f2}. 
For $m \geq 17$, a four-cycle among shards still occurs, and its location relative to the base region is illustrated in Figure~\ref{fig:octagon} for $m = 17$. 
The line at infinity is included in these arrangements, and one of the intersection points in the cycle occurs in a rank-2 subarrangement that includes the hyperplane at infinity. 
Relative to the plane graph, the cycle involves the same description as a embedded cycle for the family~$\F_2$.
\end{proof}

\begin{figure}[H]
\begin{center}
\resizebox{\textwidth}{!}{
\begin{tikzpicture}
\node at (0,0) {\includegraphics[width=\textwidth]{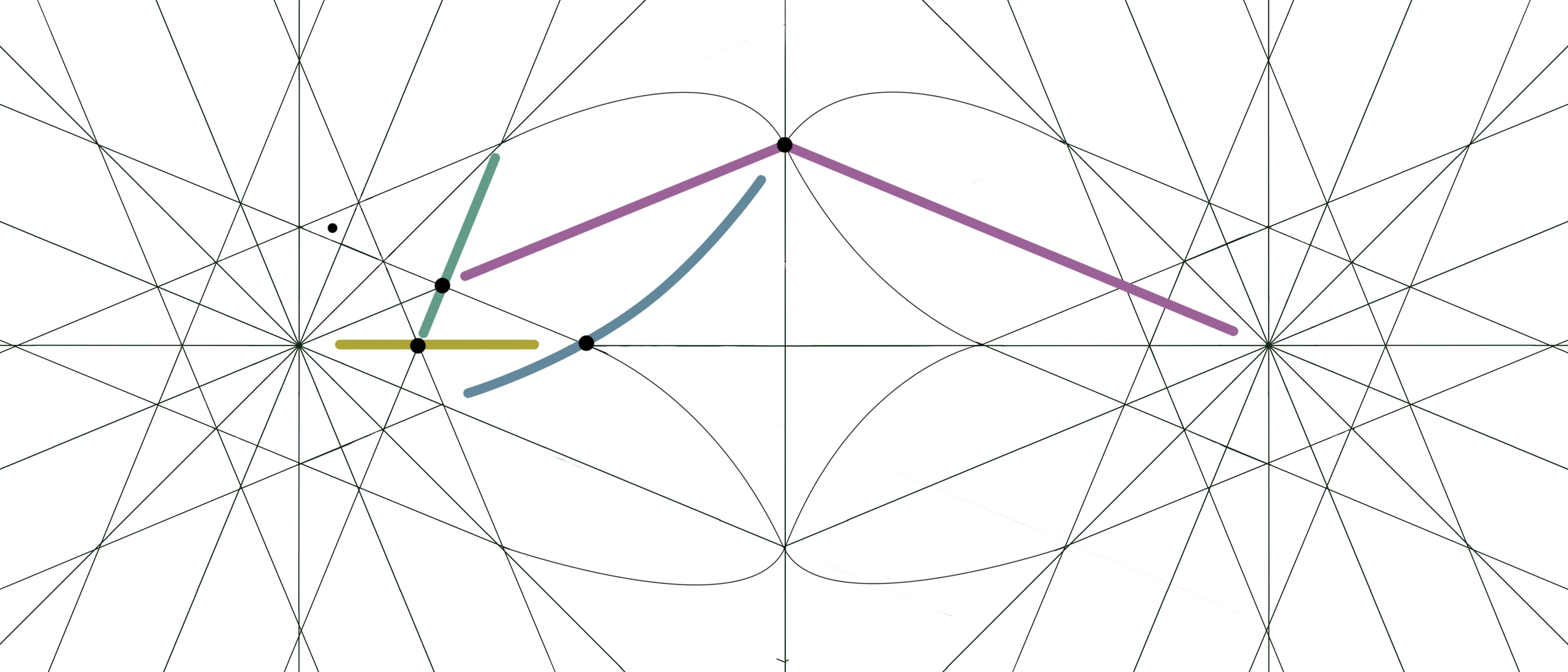}};
\node at (-1,1.3) {r$_8$};
\node at (-1,0) {r$_7$};
\node at (-1,-1.7) {r$_6$};
\node at (-2.3,-2.5) {r$_5$};
\node at (-3.7,-2.5) {r$_4$};
\node at (-4.8,-2.5) {r$_3$};
\node at (-5.8,-2.5) {r$_2$};
\node at (-7.2,-2.5) {r$_1$};
\node at (-7.2, 2) {e$_1$};
\node at (-7.2,.3) {e$_2$};
\node at (-4.4,2.9) {e$_3$};
\node at (-7.2,2.9) {e$_4$};
\node at (-7.2,-.5) {e$_5$};
\node at (-7.2,-2) {e$_6$};
\node at (-2.6,2.9) {e$_7$};
\node at (-5.1,2.9) {e$_8$};
\node at (-3.8,-.3) {1};
\node at (-3.6,.7)  {2};
\node at (.2,1.5)   {3};
\node at (-1.9,-.4)  {4};
\node at (.3,.6)    {$\infty$};
\end{tikzpicture}}
\end{center}
\caption{The simplicial hyperplane arrangement $\A(17,160)_4$ from $\F_3$ whose lattice of regions with the marked base region is not congruence normal}
\label{fig:octagon}
\end{figure}

\subsection{Never \textbf{CN} simplicial arrangements}
\label{ssec:never}
Three of the known simplicial arrangements of rank 3 are never congruence normal, see Table~\ref{tab:nevercn}. 
That is, there is no choice of base region such that the lattice of regions is congruence normal.
The first arrangement is an arrangement with 22 hyperplanes with normals 
related to $\sqrt{5}$, see Figure~\ref{fig:not_cn_22}.
The second arrangement has 25 hyperplanes with normals related to $\sqrt{5}$ and is shown in Figure~\ref{fig:not_cn_25}.
The third arrangement is the new sporadic arrangement found in \cite{cuntz_greedy_2020}. 
It is the only known arrangement with $35$ hyperplanes and is illustrated in Figure~\ref{fig:not_cn_35}.
We are not aware of any geometric explanation for the provenance of these arrangements and why they are never congruence normal.

\begin{table}[H]
\begin{tabular}{c|c|c}
\multicolumn{3}{c}{Never congruence normal}\\\toprule[1pt]
$\A(22,288)$ & $\A(25,360)$ & $\A(35,680)$ \\
\end{tabular}
\caption{Simplicial arrangements that are never congruence normal}
\label{tab:nevercn}
\end{table}

\begin{figure}[H]
\begin{tikzpicture}
	[x={(3cm, 0cm)},
	 y={(0cm, 1.2cm)},
	 scale=1.5,
	 point/.style={circle,fill=black,inner sep=0.75pt},
	 line/.style={black!20,line width=0.5pt}]

\coordinate (0) at  (2.78235500444083, 2.86296737853363);
\coordinate (1) at  (2.90779740156158, 2.99204418227311);
\coordinate (2) at  (0.00000000000000, 0.00000000000000);
\coordinate (3) at  (2.50000000000000, 4.33012701892219);
\coordinate (4) at  (1.95864408835225, 3.39247107497052);
\coordinate (5) at  (3.81966011250105, 0.00000000000000);
\coordinate (6) at  (5.00000000000000, 0.00000000000000);
\coordinate (7) at  (3.07052017144396, 1.36561724787068);
\coordinate (8) at  (3.57469660655236, 1.58985027592800);
\coordinate (9) at  (3.09737880796660, 2.12227294276199);
\coordinate (10) at (3.05901699437495, 2.16506350946110);
\coordinate (11) at (3.26639215491083, 2.23807810429496);
\coordinate (12) at (3.14194292957320, 2.22375553916077);
\coordinate (13) at (3.01907639967691, 2.20961512267784);
\coordinate (14) at (3.12305898749054, 2.28571836357388);
\coordinate (15) at (3.05696480387029, 2.30768637070836);
\coordinate (16) at (3.10525430923287, 2.34413985987918);
\coordinate (17) at (2.96326962737348, 2.43759490491044);
\coordinate (18) at (2.98420344738630, 2.45481513101885);
\coordinate (19) at (3.00317939994615, 2.47042480920960);
\coordinate (20) at (3.17418082864579, 2.61109112167746);
\coordinate (21) at (3.38999105559213, 2.78861729235476);

\begin{scope}

\clip (1.85,-0.25) rectangle (4.25,3.45);

\draw[line] (1) -- (2);
\draw[line] (3) -- (7);
\draw[line] (4) -- (20);
\draw[line] (0) -- (5);
\draw[line] (0) -- (6);
\draw[line] (0) -- (8);
\draw[line] (0) -- (9);
\draw[line] (0) -- (12);
\draw[line] (0) -- (15);
\draw[line] (3) -- (5);
\draw[line] (4) -- (21);
\draw[line] (1) -- (6);
\draw[line] (1) -- (7);
\draw[line] (1) -- (8);
\draw[line] (1) -- (9);
\draw[line] (1) -- (10);
\draw[line] (1) -- (13);
\draw[line] (2) -- (3);
\draw[line] (2) -- (6);
\draw[line] (2) -- (8);
\draw[line] (2) -- (11);
\draw[line] (2) -- (12);
\draw[line] (2) -- (14);
\draw[line] (2) -- (16);
\draw[line] (2) -- (21);
\draw[line] (3) -- (6);
\draw[line] (3) -- (8);
\draw[line] (3) -- (9);
\draw[line] (3) -- (10);
\draw[line] (3) -- (13);
\draw[line] (4) -- (5);
\draw[line] (4) -- (6);
\draw[line] (4) -- (11);
\draw[line] (4) -- (12);
\draw[line] (4) -- (14);
\draw[line] (4) -- (16);
\draw[line] (5) -- (21);
\draw[line] (5) -- (18);
\draw[line] (5) -- (17);
\draw[line] (5) -- (20);
\draw[line] (5) -- (19);
\draw[line] (6) -- (17);
\draw[line] (6) -- (18);
\draw[line] (6) -- (19);
\draw[line] (7) -- (16);
\draw[line] (7) -- (15);
\draw[line] (7) -- (21);
\draw[line] (7) -- (20);
\draw[line] (7) -- (19);
\draw[line] (8) -- (18);
\draw[line] (8) -- (19);
\draw[line] (8) -- (17);
\draw[line] (9) -- (21);
\draw[line] (9) -- (20);
\draw[line] (10) -- (21);
\draw[line] (10) -- (20);
\draw[line] (11) -- (13);
\draw[line] (11) -- (15);
\draw[line] (11) -- (17);
\draw[line] (13) -- (20);
\draw[line] (13) -- (21);

\node[point]  at (0) {};
\node[point]  at (1) {};
\node[point]  at (2) {};
\node[point]  at (3) {};
\node[point]  at (4) {};
\node[point]  at (5) {};
\node[point]  at (6) {};
\node[point]  at (7) {};
\node[point]  at (8) {};
\node[point]  at (9) {};
\node[point]  at (10) {};
\node[point]  at (11) {};
\node[point]  at (12) {};
\node[point]  at (13) {};
\node[point]  at (14) {};
\node[point]  at (15) {};
\node[point]  at (16) {};
\node[point]  at (17) {};
\node[point]  at (18) {};
\node[point]  at (19) {};
\node[point]  at (20) {};
\node[point]  at (21) {};

\end{scope}

\end{tikzpicture}
\caption{The point configuration $\A(22,288)^*$. Three points are not shown and can be obtained by continuing the line segments. }
\label{fig:not_cn_22}
\end{figure}

\begin{figure}[H]
\begin{tikzpicture}
	[x={(4cm, 0cm)},
	 y={(0cm, 1.5cm)},
	 scale=1.3,
	 point/.style={circle,fill=black,inner sep=0.75pt},
	 line/.style={black!20,line width=0.5pt}]

\coordinate (0) at  (2.23606797749979, 3.87298334620742);
\coordinate (1) at  (2.61803398874989, 2.80251707688815);
\coordinate (2) at  (2.76393202250021, 2.39363534581848);
\coordinate (3) at  (2.57514161979123, 2.67616567329817);
\coordinate (4) at  (2.60785001480277, 2.56071587030622);
\coordinate (5) at  (2.64754248593737, 2.42061459137964);
\coordinate (6) at  (2.50000000000000, 2.67616567329817);
\coordinate (7) at  (2.02254248593737, 3.50314634611018);
\coordinate (8) at  (2.30327668541684, 2.88675134594813);
\coordinate (9) at  (2.39918693812442, 2.67616567329817);
\coordinate (10) at (2.59463815113608, 2.24702255251213);
\coordinate (11) at (2.33002683322360, 2.49421513157466);
\coordinate (12) at (2.50000000000000, 1.93649167310371);
\coordinate (13) at (2.36067977499790, 2.04440865534831);
\coordinate (14) at (2.20491502812526, 2.16506350946110);
\coordinate (15) at (5.00000000000000, 0.00000000000000);
\coordinate (16) at (0.00000000000000, 0.00000000000000);
\coordinate (17) at (2.50000000000000, 0.00000000000000);
\coordinate (18) at (3.19098300562505, 3.13330934601295);
\coordinate (19) at (3.09016994374947, 2.67616567329817);
\coordinate (20) at (2.50000000000000, 4.33012701892219);
\coordinate (21) at (2.83457635340900, 3.03431837413435);
\coordinate (22) at (3.01502832395825, 2.33543089740679);
\coordinate (23) at (2.63932022500210, 3.30792269124804);
\coordinate (24) at (2.68237254218789, 2.99204418227311);

\begin{scope}

\clip (1.75,-0.15) rectangle (3.25,3.95);

\draw[line] (0) -- (1) -- (2);
\draw[line] (0) -- (3) -- (4) -- (5);
\draw[line] (0) -- (6) -- (10);
\draw[line] (0) -- (7) -- (16) -- (20);
\draw[line] (0) -- (8) -- (11) -- (13) -- (17);
\draw[line] (0) -- (9) -- (12);
\draw[line] (0) -- (15) -- (19) -- (21) -- (23);
\draw[line] (0) -- (22) -- (24);
\draw[line] (1) -- (3) -- (13) -- (24);
\draw[line] (1) -- (4) -- (10) -- (17) -- (23);
\draw[line] (1) -- (5) -- (20);
\draw[line] (1) -- (6) -- (11) -- (16) -- (21);
\draw[line] (1) -- (7) -- (15) -- (22);
\draw[line] (1) -- (8) -- (19);
\draw[line] (1) -- (9) -- (18);
\draw[line] (2) -- (3) -- (7);
\draw[line] (2) -- (4) -- (6) -- (8) -- (15);
\draw[line] (2) -- (5) -- (11) -- (22);
\draw[line] (2) -- (10) -- (13) -- (16) -- (19);
\draw[line] (2) -- (12) -- (18);
\draw[line] (2) -- (17) -- (21);
\draw[line] (2) -- (20) -- (23) -- (24);
\draw[line] (3) -- (6) -- (9) -- (19);
\draw[line] (3) -- (8) -- (22);
\draw[line] (3) -- (10) -- (20);
\draw[line] (3) -- (11) -- (18);
\draw[line] (3) -- (12) -- (23);
\draw[line] (3) -- (14) -- (21);
\draw[line] (4) -- (9) -- (22);
\draw[line] (4) -- (11) -- (19);
\draw[line] (4) -- (12) -- (24);
\draw[line] (4) -- (13) -- (21);
\draw[line] (4) -- (14) -- (16) -- (18);
\draw[line] (5) -- (6) -- (7);
\draw[line] (5) -- (9) -- (15);
\draw[line] (5) -- (10) -- (12) -- (21);
\draw[line] (5) -- (13) -- (18);
\draw[line] (5) -- (14) -- (19);
\draw[line] (5) -- (17) -- (24);
\draw[line] (6) -- (12) -- (17) -- (20);
\draw[line] (6) -- (13) -- (23);
\draw[line] (6) -- (14) -- (24);
\draw[line] (7) -- (8) -- (9) -- (10);
\draw[line] (7) -- (11) -- (12);
\draw[line] (7) -- (14) -- (17);
\draw[line] (7) -- (18) -- (23);
\draw[line] (7) -- (19) -- (24);
\draw[line] (8) -- (14) -- (20);
\draw[line] (8) -- (16) -- (23);
\draw[line] (8) -- (18) -- (21) -- (24);
\draw[line] (9) -- (11) -- (14) -- (23);
\draw[line] (9) -- (13) -- (20);
\draw[line] (9) -- (16) -- (24);
\draw[line] (10) -- (11) -- (15);
\draw[line] (10) -- (14) -- (22);
\draw[line] (12) -- (13) -- (14) -- (15);
\draw[line] (12) -- (16) -- (22);
\draw[line] (15) -- (16) -- (17);
\draw[line] (15) -- (18) -- (20);
\draw[line] (17) -- (18) -- (19) -- (22);
\draw[line] (20) -- (21) -- (22);

\node[point]  at (0) {};
\node[point]  at (1) {};
\node[point]  at (2) {};
\node[point]  at (3) {};
\node[point]  at (4) {};
\node[point]  at (5) {};
\node[point]  at (6) {};
\node[point]  at (7) {};
\node[point]  at (8) {};
\node[point]  at (9) {};
\node[point]  at (10) {};
\node[point]  at (11) {};
\node[point]  at (12) {};
\node[point]  at (13) {};
\node[point]  at (14) {};
\node[point]  at (15) {};
\node[point]  at (16) {};
\node[point]  at (17) {};
\node[point]  at (18) {};
\node[point]  at (19) {};
\node[point]  at (20) {};
\node[point]  at (21) {};
\node[point]  at (22) {};
\node[point]  at (23) {};
\node[point]  at (24) {};

\end{scope}

\end{tikzpicture}
\caption{The point configuration $\A(25,360)^*$. Three points are not shown and can be obtained by continuing the line segments.}
\label{fig:not_cn_25}
\end{figure}

\begin{figure}[H]
\begin{tikzpicture}
	[x={(1.6cm, 0cm)},
	 y={(0cm, 1cm)},
	 scale=3,
	 point/.style={circle,fill=black,inner sep=0.75pt},
	 line/.style={black!20,line width=0.5pt}]

\coordinate (0) at  (3.32768253095497, 1.74265003171565);
\coordinate (1) at  (1.87818397394008, 3.25311006882584);
\coordinate (2) at  (0.00000000000000, 0.00000000000000);
\coordinate (3) at  (2.15548365471666, 1.80238132300618);
\coordinate (4) at  (2.73751258720550, 1.43358819615121);
\coordinate (5) at  (5.00000000000000, 0.00000000000000);
\coordinate (6) at  (1.33921846697968, 2.31959442724331);
\coordinate (7) at  (2.40441358514015, 2.01053259167888);
\coordinate (8) at  (2.50000000000000, 1.78482593493031);
\coordinate (9) at  (2.50000000000000, 1.93649167310371);
\coordinate (10) at (2.96715395795394, 2.11834133483492);
\coordinate (11) at (2.69716897975906, 1.92559085838539);
\coordinate (12) at (2.33345049896733, 2.03112222947948);
\coordinate (13) at (2.30283102024094, 1.92559085838539);
\coordinate (14) at (2.12589566933614, 1.95717465274315);
\coordinate (15) at (2.18385340506974, 2.01053259167888);
\coordinate (16) at (2.24975151703417, 1.74265003171565);
\coordinate (17) at (2.39811018962345, 1.85756816535638);
\coordinate (18) at (2.41881234809646, 1.90488764453019);
\coordinate (19) at (2.32156306604063, 1.86703128316506);
\coordinate (20) at (2.50000000000000, 4.33012701892219);
\coordinate (21) at (2.50000000000000, 1.84497206462827);
\coordinate (22) at (3.83921846697968, 2.01053259167888);
\coordinate (23) at (2.37982380899965, 2.19094986840217);
\coordinate (24) at (2.52377486585338, 1.88615134022947);
\coordinate (25) at (2.65466206810598, 2.44397567869273);
\coordinate (26) at (2.67843693395936, 2.15403390019270);
\coordinate (27) at (3.07450730194470, 1.61006652449696);
\coordinate (28) at (1.62793549097425, 2.81966698181200);
\coordinate (29) at (2.44092434675387, 1.74265003171565);
\coordinate (30) at (2.36594504411141, 1.94390312412404);
\coordinate (31) at (2.27565984571700, 2.01053259167888);
\coordinate (32) at (2.16690099793465, 1.88615134022947);
\coordinate (33) at (2.44604987980490, 1.96714520168799);
\coordinate (34) at (2.35564148800271, 1.76049631439454);

\begin{scope}

\clip (1.3,1.3) rectangle (3.85,3.3);

\draw[line] (0) -- (1) -- (5) -- (10) -- (25);
\draw[line] (0) -- (2) -- (4) -- (22) -- (27);
\draw[line] (0) -- (3) -- (8);
\draw[line] (0) -- (6) -- (7) -- (11) -- (12);
\draw[line] (0) -- (9) -- (15);
\draw[line] (0) -- (13) -- (14) -- (18) -- (24);
\draw[line] (0) -- (16) -- (29);
\draw[line] (0) -- (17) -- (19) -- (21) -- (32);
\draw[line] (0) -- (26) -- (28);
\draw[line] (0) -- (31) -- (33);
\draw[line] (1) -- (2) -- (6) -- (20) -- (28);
\draw[line] (1) -- (3) -- (14);
\draw[line] (1) -- (4) -- (9) -- (23) -- (24);
\draw[line] (1) -- (7) -- (8);
\draw[line] (1) -- (12) -- (17) -- (29) -- (30);
\draw[line] (1) -- (13) -- (19) -- (31) -- (34);
\draw[line] (1) -- (15) -- (16);
\draw[line] (1) -- (21) -- (33);
\draw[line] (1) -- (26) -- (27);
\draw[line] (2) -- (3) -- (7) -- (13);
\draw[line] (2) -- (8) -- (10) -- (11) -- (29);
\draw[line] (2) -- (9) -- (16) -- (17);
\draw[line] (2) -- (12) -- (32);
\draw[line] (2) -- (14) -- (15) -- (23) -- (25);
\draw[line] (2) -- (19) -- (26) -- (33);
\draw[line] (2) -- (24) -- (34);
\draw[line] (3) -- (4) -- (5) -- (6) -- (16);
\draw[line] (3) -- (9) -- (10) -- (18) -- (19);
\draw[line] (3) -- (11) -- (17) -- (24);
\draw[line] (3) -- (12) -- (25);
\draw[line] (3) -- (15) -- (20) -- (32);
\draw[line] (3) -- (21) -- (22);
\draw[line] (3) -- (23) -- (31);
\draw[line] (3) -- (26) -- (30);
\draw[line] (3) -- (27) -- (29) -- (34);
\draw[line] (4) -- (7) -- (21);
\draw[line] (4) -- (8) -- (12) -- (18);
\draw[line] (4) -- (11) -- (20) -- (25) -- (26);
\draw[line] (4) -- (14) -- (34);
\draw[line] (4) -- (15) -- (19) -- (29);
\draw[line] (4) -- (17) -- (28) -- (31);
\draw[line] (5) -- (7) -- (9);
\draw[line] (5) -- (8) -- (13) -- (15) -- (17);
\draw[line] (5) -- (11) -- (23) -- (27) -- (28);
\draw[line] (5) -- (12) -- (24);
\draw[line] (5) -- (14) -- (29);
\draw[line] (5) -- (18) -- (21) -- (30) -- (31);
\draw[line] (5) -- (20) -- (22);
\draw[line] (5) -- (32) -- (34);
\draw[line] (6) -- (8) -- (14) -- (19);
\draw[line] (6) -- (9) -- (31);
\draw[line] (6) -- (10) -- (22) -- (23) -- (26);
\draw[line] (6) -- (13) -- (21) -- (27);
\draw[line] (6) -- (15) -- (24) -- (30);
\draw[line] (6) -- (29) -- (32);
\draw[line] (7) -- (10) -- (14);
\draw[line] (7) -- (15) -- (22) -- (31);
\draw[line] (7) -- (16) -- (19) -- (25) -- (30);
\draw[line] (7) -- (17) -- (20);
\draw[line] (7) -- (18) -- (23) -- (29);
\draw[line] (7) -- (24) -- (28) -- (33);
\draw[line] (7) -- (26) -- (32);
\draw[line] (8) -- (9) -- (20) -- (21);
\draw[line] (8) -- (16) -- (22) -- (34);
\draw[line] (8) -- (23) -- (33);
\draw[line] (8) -- (24) -- (25);
\draw[line] (8) -- (27) -- (32);
\draw[line] (8) -- (28) -- (30);
\draw[line] (9) -- (11) -- (14) -- (30);
\draw[line] (9) -- (12) -- (27) -- (33);
\draw[line] (9) -- (13) -- (22);
\draw[line] (9) -- (25) -- (29);
\draw[line] (9) -- (26) -- (34);
\draw[line] (10) -- (12) -- (15);
\draw[line] (10) -- (13) -- (30) -- (32) -- (33);
\draw[line] (10) -- (16) -- (24);
\draw[line] (10) -- (20) -- (27);
\draw[line] (10) -- (21) -- (34);
\draw[line] (11) -- (15) -- (33);
\draw[line] (11) -- (16) -- (21);
\draw[line] (11) -- (18) -- (22) -- (32);
\draw[line] (12) -- (13) -- (16) -- (23);
\draw[line] (12) -- (14) -- (26) -- (31);
\draw[line] (12) -- (19) -- (20);
\draw[line] (12) -- (21) -- (28);
\draw[line] (13) -- (28) -- (29);
\draw[line] (14) -- (16) -- (28) -- (32);
\draw[line] (14) -- (17) -- (27);
\draw[line] (14) -- (22) -- (33);
\draw[line] (15) -- (18) -- (27);
\draw[line] (15) -- (28) -- (34);
\draw[line] (16) -- (18) -- (26);
\draw[line] (16) -- (20) -- (31);
\draw[line] (17) -- (18) -- (25) -- (33) -- (34);
\draw[line] (19) -- (22) -- (24);
\draw[line] (20) -- (23) -- (30) -- (34);
\draw[line] (20) -- (29) -- (33);
\draw[line] (21) -- (24) -- (26) -- (29);
\draw[line] (22) -- (25) -- (28);
\draw[line] (24) -- (27) -- (31);
\draw[line] (25) -- (31) -- (32);

\node[point]  at (0) {};
\node[point]  at (1) {};
\node[point]  at (2) {};
\node[point]  at (3) {};
\node[point]  at (4) {};
\node[point]  at (5) {};
\node[point]  at (6) {};
\node[point]  at (7) {};
\node[point]  at (8) {};
\node[point]  at (9) {};
\node[point]  at (10) {};
\node[point]  at (11) {};
\node[point]  at (12) {};
\node[point]  at (13) {};
\node[point]  at (14) {};
\node[point]  at (15) {};
\node[point]  at (16) {};
\node[point]  at (17) {};
\node[point]  at (18) {};
\node[point]  at (19) {};
\node[point]  at (20) {};
\node[point]  at (21) {};
\node[point]  at (22) {};
\node[point]  at (23) {};
\node[point]  at (24) {};
\node[point]  at (25) {};
\node[point]  at (26) {};
\node[point]  at (27) {};
\node[point]  at (28) {};
\node[point]  at (29) {};
\node[point]  at (30) {};
\node[point]  at (31) {};
\node[point]  at (32) {};
\node[point]  at (33) {};
\node[point]  at (34) {};

\end{scope}

\end{tikzpicture}
\caption{The point configuration $\A(35,680)^*$. Three points are not shown and can be obtained by continuing the line segments.}
\label{fig:not_cn_35}
\end{figure}
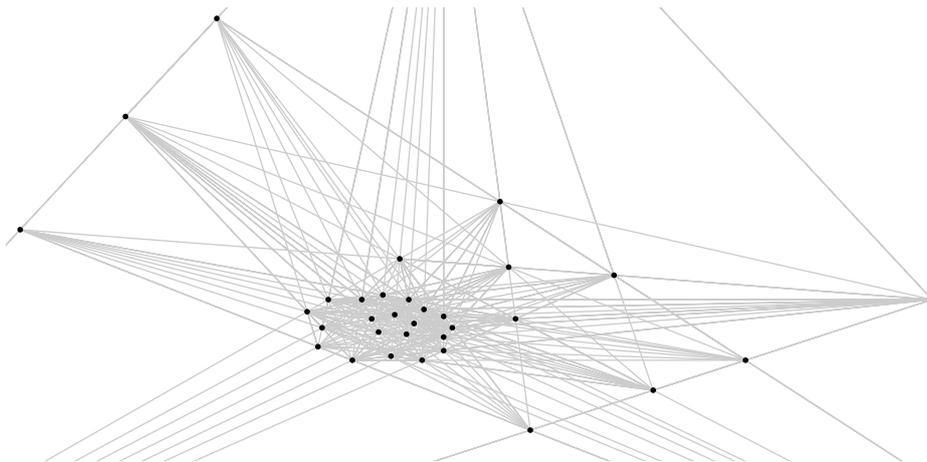

\subsection{Observations and consequences}
\label{ssec:discuss}

We make a few remarks on the verification and its implications.
The number of shards do not depend on the choice of base region:
indeed, \cite[Lemma~146]{padrol_shard_2020} says that in a simplicial arrangement, the number of shards is the number of rays in the arrangement minus the dimension. 
So, computing the number of shards leads to the number of facets of the corresponding simple zonotope.
For rank-$3$ simplicial arrangements, the number of shards is one less than half the number of regions.
For example, the arrangements $\A(30,480)$ and $\A(31,480)$ have different numbers of hyperplanes but the same number of shards and regions.

Finally, we end with questions that arose from this investigation.

\begin{description}
	\item[Question 1] What is the relationship between polygonal and semidistributive lattices?
	\item[Question 2] Is there a hyperplane arrangement with at most $8$ hyperplanes that yields a tight poset of regions which is not congruence normal?
        \item[Question 3] Is there a proof of congruence normality for $\A(31,480)$ using dualty with $H_3$?
	\item[Question 4] Is there a geometric explanation for the provenance of the three arrangements that are never congruence normal? Are the posets of regions all isomorphic? 
	\item[Question 5] Reading used ``signed subsets'' to describe when an edge occurs between two shards in type $A$ and $B$ \cite{reading_lattice_2004}.
	Can shard covectors be used in conjunction with positive roots to describe forcing on shards? 
        \item[Question 6] Apart from being dual to 2-neighborly, what can be said about the combinatorial types of the regions in a tight hyperplane arrangement?
\end{description}

\section{Invariants of rank-$3$ simplicial hyperplane arrangements}
\label{sec:invariants}

Table~\ref{tab:invariants} gives a list of invariants for the simplicial hyperplane arrangements of rank $3$ with at most $37$ hyperplanes (excluding the reducible near-pencil arrangements).

The $f$-vector consists of the numbers of $0$-, $1$-, and $2$-cells in the corresponding CW-complex, i.e.\ $f_3$ is the number of chambers.
The $t$-vector contains the numbers $t_i$ of vertices which lie on exactly $i$ lines;
the $r$-vector contains the numbers $r_i$ of lines on which exactly $i$ vertices lie.
The automorphism groups of the CW-complexes are listed in the column AG, the automorphism groups of the matroids are listed in the column AGM; a pair $(a,b)$ represents the $b$-th group of order $a$ in the database of small groups (as included for example in the system \texttt{GAP} \cite{GAP4}).
In the column EXP we list the roots of the characteristic polynomials of the arrangements when they are all integers.
The column ``domain'' contains the minimal field of definition for a realization of the matroid of the arrangement in characteristic zero (as computed in \cite{p-C10b}): $\mathbb{Z}$ stands for a crystallographic arrangement (which defines a Weyl groupoid). The other domains are
\begin{eqnarray*}
Q_i &:=& \mathbb{Q}(\zeta+\zeta^{-1}), \quad \zeta \text{ an } i\text{-th root of unity},\\
K_1 &:=& \mathbb{Q}[X]/(X^3-X+1), \\
K_2 &:=& \mathbb{Q}[X]/(X^4 - 3X^3 + 3X^2 - 3X + 1).
\end{eqnarray*}

\clearpage
{\tiny
\begin{longtable}{llllllll}
Name & $f$-vector & $t$-vector & $r$-vector & AG & AGM & EXP & domain \\\toprule[1pt]
$\A{(6,24)}=\AG(6,1)$ & (7,18,12) & $(3,4)$ & $(0,6)$ & (48,48) & (24,12) & [ 1, 2, 3 ] & $\mathbb{Z}$ \\
\hline
$\A{(7,32)}=\AG(7,1)$ & (9,24,16) & $(3,6)$ & $(0,4,3)$ & (48,48) & (24,12) & [ 1, 3, 3 ] & $\mathbb{Z}$ \\
\hline
$\A{(8,40)}=\AG(8,1)$ & (11,30,20) & $(4,6,1)$ & $(0,2,6)$ & (16,11) & (8,3) & [ 1, 3, 4 ] & $\mathbb{Z}$ \\
\hline
$\A{(9,48)}=\AG(9,1)$ & (13,36,24) & $(6,4,3)$ & $(0^{2},9)$ & (48,48) & (24,12) & [ 1, 3, 5 ] & $\mathbb{Z}$ \\
\hline
$\A{(10,60)_1}=\AG(10,3)$ & (16,45,30) & $(6,7,3)$ & $(0,1,3,6)$ & (24,14) & (12,4) & [ 1, 4, 5 ] & $\mathbb{Z}$ \\
$\A{(10,60)_2}=\AG(10,2)$ & (16,45,30) & $(6,7,3)$ & $(0^{2},6,3,1)$ & (12,4) & (6,1) & [ 1, 4, 5 ] & $\mathbb{Z}$ \\
$\A{(10,60)_3}=\AG(10,1)$ & (16,45,30) & $(5,10,0,1)$ & $(0^{2},5^{2})$ & (20,4) & (20,3) & [ 1, 4, 5 ] & $Q_{5}$ \\
\hline
$\A{(11,72)}=\AG(11,1)$ & (19,54,36) & $(7,8,4)$ & $(0^{2},4^{2},3)$ & (8,5) & (4,2) & [ 1, 5, 5 ] & $\mathbb{Z}$ \\
\hline
$\A{(12,84)_1}=\AG(12,2)$ & (22,63,42) & $(8,10,3,1)$ & $(0^{2},3^{2},6)$ & (8,5) & (4,2) & [ 1, 5, 6 ] & $\mathbb{Z}$ \\
$\A{(12,84)_2}=\AG(12,3)$ & (22,63,42) & $(9,7,6)$ & $(0^{2},3^{2},6)$ & (12,4) & (6,1) & [ 1, 5, 6 ] & $\mathbb{Z}$ \\
$\A{(12,84)_3}=\AG(12,1)$ & (22,63,42) & $(6,15,0^{2},1)$ & $(0^{2},3^{2},6)$ & (24,14) & (12,4) & [ 1, 5, 6 ] & $\mathbb{Q}$ \\
\hline
$\A{(13,96)_1}=\AG(13,1)$ & (25,72,48) & $(9,12,3,0,1)$ & $(0^{2},3,0,10)$ & (24,14) & (12,4) & [ 1, 5, 7 ] & $\mathbb{Z}$ \\
$\A{(13,96)_2}=\AG(13,3)$ & (25,72,48) & $(10^{2},3,2)$ & $(0^{2},1,4,8)$ & (8,5) & (4,2) & [ 1, 5, 7 ] & $\mathbb{Z}$ \\
$\A{(13,96)_3}=\AG(13,2)$ & (25,72,48) & $(12,4,9)$ & $(0^{2},3,0,10)$ & (48,48) & (24,12) & [ 1, 5, 7 ] & $\mathbb{Z}$ \\
$\A{(13,104)}=\AG(13,4)$ & (27,78,52) & $(6,18,3)$ & $(0^{4},13)$ & (24,13) & (24,12) & [] & $Q_{5}$ \\
\hline
$\A{(14,112)_1}=\AG(14,2)$ & (29,84,56) & $(11,12,4,2)$ & $(0^{2},1,4^{3},1)$ & (4,2) & (2,1) & [ 1, 6, 7 ] & $\mathbb{Z}$ \\
$\A{(14,112)_2}=\AG(14,1)$ & (29,84,56) & $(7,21,0^{3},1)$ & $(0^{3},7,0,7)$ & (28,3) & (42,1) & [ 1, 6, 7 ] & $Q_{7}$ \\
$\A{(14,112)_3}=\AG(14,4)$ & (29,84,56) & $(10,14,4,0,1)$ & $(0^{3},4,6,4)$ & (8,5) & (8,3) & [ 1, 6, 7 ] & $Q_{5}$ \\
$\A{(14,116)}=\AG(14,3)$ & (30,87,58) & $(9,16,4,1)$ & $(0^{4},11,3)$ & (4,2) & (4,2) & [] & $Q_{5}$ \\
\hline
$\A{(15,120)}=\AG(15,1)$ & (31,90,60) & $(15,10,0,6)$ & $(0^{4},15)$ & (120,35) & (120,34) & [ 1, 5, 9 ] & $Q_{5}$ \\
$\A{(15,128)_1}=\AG(15,2)$ & (33,96,64) & $(13,12,6,2)$ & $(0^{2},1,4,2,4^{2})$ & (16,11) & (8,3) & [ 1, 7, 7 ] & $\mathbb{Z}$ \\
$\A{(15,128)_2}=\AG(15,4)$ & (33,96,64) & $(12,14,6,0,1)$ & $(0^{4},10,4,1)$ & (8,5) & (8,3) & [ 1, 7, 7 ] & $Q_{5}$ \\
$\A{(15,132)_1}=\AG(15,5)$ & (34,99,66) & $(9,22,0,3)$ & $(0^{4},9,3^{2})$ & (12,4) & (6,1) & [] & $K_1$ \\
$\A{(15,132)_2}=\AG(15,3)$ & (34,99,66) & $(12,13,9)$ & $(0^{4},9,3^{2})$ & (12,4) & (6,1) & [] & $\mathbb{Q}$ \\
\hline
$\A{(16,140)}=\AG(16,4)$ & (36,105,70) & $(15^{2},0,6)$ & $(0^{4},10,5,0^{2},1)$ & (20,4) & (20,3) & [ 1, 6, 9 ] & $Q_{5}$ \\
$\A{(16,144)_1}=\AG(16,2)$ & (37,108,72) & $(14,15,6,1^{2})$ & $(0^{2},1,2,4,2,7)$ & (8,5) & (4,2) & [ 1, 7, 8 ] & $\mathbb{Z}$ \\
$\A{(16,144)_2}=\AG(16,3)$ & (37,108,72) & $(15,13,6,3)$ & $(0^{4},10,0,6)$ & (12,4) & (6,1) & [ 1, 7, 8 ] & $\mathbb{Z}$ \\
$\A{(16,144)_3}=\AG(16,1)$ & (37,108,72) & $(8,28,0^{4},1)$ & $(0^{3},4^{2},0,8)$ & (32,39) & (32,43) & [ 1, 7, 8 ] & $Q_{8}$ \\
$\A{(16,144)_4}=\AG(16,6)$ & (37,108,72) & $(15,12,9,0,1)$ & $(0^{4},7,6,3)$ & (12,4) & (6,1) & [ 1, 7, 8 ] & $\mathbb{Q}$ \\
$\A{(16,144)_5}=\AG(16,5)$ & (37,108,72) & $(14,16,3,4)$ & $(0^{3},2,4,8,0,2)$ & (8,5) & (8,3) & [ 1, 7, 8 ] & $Q_{5}$ \\
$\A{(16,148)}=\AG(16,7)$ & (38,111,74) & $(12,19,6,0,1)$ & $(0^{3},3^{2},2,8)$ & (8,5) & (4,2) & [] & $\mathbb{Q}$ \\
\hline
$\A{(17,160)_1}=\AG(17,2)$ & (41,120,80) & $(16^{2},7,0,2)$ & $(0^{2},1,0,6,0,10)$ & (16,11) & (8,3) & [ 1, 7, 9 ] & $\mathbb{Z}$ \\
$\A{(17,160)_2}=\AG(17,4)$ & (41,120,80) & $(16^{2},7,0,2)$ & $(0^{2},1,0,6,0,10)$ & (16,11) & (8,3) & [ 1, 7, 9 ] & $\mathbb{Z}$ \\
$\A{(17,160)_3}=\AG(17,3)$ & (41,120,80) & $(18,12,7,4)$ & $(0^{4},8,0,9)$ & (16,11) & (8,3) & [ 1, 7, 9 ] & $\mathbb{Z}$ \\
$\A{(17,160)_4}=\AG(17,1)$ & (41,120,80) & $(12,24,4,0^{3},1)$ & $(0^{4},8,0,9)$ & (32,39) & (32,43) & [ 1, 7, 9 ] & $Q_{8}$ \\
$\A{(17,160)_5}=\AG(17,5)$ & (41,120,80) & $(16,18,1,6)$ & $(0^{4},6,8,1,0,2)$ & (8,5) & (8,3) & [ 1, 7, 9 ] & $Q_{5}$ \\
$\A{(17,164)}=\AG(17,6)$ & (42,123,82) & $(16,15,10,0,1)$ & $(0^{4},6,3,7,0,1)$ & (4,2) & (2,1) & [] & $\mathbb{Q}$ \\
$\A{(17,168)_1}=\AG(17,7)$ & (43,126,84) & $(13,22,7,0,1)$ & $(0^{4},6,0,10,0,1)$ & (8,5) & (4,2) & [] & $\mathbb{Q}$ \\
$\A{(17,168)_2}=\AG(17,8)$ & (43,126,84) & $(14,20,7,2)$ & $(0^{4},1,8^{2})$ & (8,5) & (8,3) & [] & $Q_{8}$ \\
\hline
$\A{(18,180)_1}=\AG(18,7)$ & (46,135,90) & $(18^{2},6,3,1)$ & $(0^{3},3^{2},0,6^{2})$ & (12,4) & (6,1) & [ 1, 8, 9 ] & $\mathbb{Z}$ \\
$\A{(18,180)_2}=\AG(18,3)$ & (46,135,90) & $(19,16,6,5)$ & $(0^{4},6,2,6,3,1)$ & (4,2) & (2,1) & [ 1, 8, 9 ] & $\mathbb{Z}$ \\
$\A{(18,180)_3}=\AG(18,1)$ & (46,135,90) & $(9,36,0^{5},1)$ & $(0^{4},9,0^{2},9)$ & (36,4) & (54,6) & [ 1, 8, 9 ] & $Q_{9}$ \\
$\A{(18,180)_4}=\AG(18,2)$ & (46,135,90) & $(18^{2},6,3,1)$ & $(0^{4},3^{2},12)$ & (24,14) & (12,4) & [ 1, 8, 9 ] & $\mathbb{Q}$ \\
$\A{(18,180)_5}=\AG(18,4)$ & (46,135,90) & $(18,19,3,6)$ & $(0^{4},3,9,3,0,3)$ & (12,4) & (6,1) & [ 1, 8, 9 ] & $Q_{5}$ \\
$\A{(18,180)_6}=\AG(18,5)$ & (46,135,90) & $(18,19,3,6)$ & $(0^{4},3,9,3,0,3)$ & (12,4) & (6,1) & [ 1, 8, 9 ] & $Q_{5}$ \\
$\A{(18,184)_1}=\AG(18,6)$ & (47,138,92) & $(18,16,12,0,1)$ & $(0^{4},5,2,7,2^{2})$ & (4,2) & (2,1) & [] & $\mathbb{Q}$ \\
$\A{(18,184)_2}=\AG(18,8)$ & (47,138,92) & $(16,22,6,2,1)$ & $(0^{4},6,0,7,4,1)$ & (4,2) & (2,1) & [] & $\mathbb{Q}$ \\
\hline
$\A{(19,192)_1}=\AG(19,1)$ & (49,144,96) & $(21,18,6,0,4)$ & $(0^{4},4,0,15)$ & (24,14) & (12,4) & [ 1, 7, 11 ] & $\mathbb{Z}$ \\
$\A{(19,192)_2}=\AG(19,3)$ & (49,144,96) & $(24,12,6^{2},1)$ & $(0^{4},4,0,15)$ & (24,14) & (12,4) & [ 1, 7, 11 ] & $\mathbb{Z}$ \\
$\A{(19,200)_1}=\AG(19,4)$ & (51,150,100) & $(20^{2},6,4,1)$ & $(0^{4},4^{4},3)$ & (8,5) & (4,2) & [ 1, 9, 9 ] & $\mathbb{Z}$ \\
$\A{(19,200)_2}=\AG(19,5)$ & (51,150,100) & $(20^{2},6,4,1)$ & $(0^{4},4^{4},3)$ & (8,2) & (4,1) & [ 1, 9, 9 ] & $\mathbb{Z}$ \\
$\A{(19,200)_3}=\AG(19,6)$ & (51,150,100) & $(20^{2},6,4,1)$ & $(0^{4},6,0,6,4,3)$ & (8,5) & (4,2) & [ 1, 9, 9 ] & $\mathbb{Z}$ \\
$\A{(19,200)_4}=\AG(19,2)$ & (51,150,100) & $(21,18,6^{2})$ & $(0^{4},1,8,6,0,4)$ & (8,5) & (8,3) & [ 1, 9, 9 ] & $Q_{5}$ \\
$\A{(19,204)}=\AG(19,7)$ & (52,153,102) & $(21,15^{2},0,1)$ & $(0^{4},4,3^{2},6,3)$ & (12,4) & (6,1) & [] & $\mathbb{Q}$ \\
\hline
$\A{(20,216)}=\AG(20,5)$ & (55,162,108) & $(20,26,4^{2},0^{2},1)$ & $(0^{3},2^{2},0,4,12)$ & (16,11) & (8,3) & [ 1, 8, 11 ] & $\mathbb{Z}$ \\
$\A{(20,220)_1}=\AG(20,3)$ & (56,165,110) & $(21,24,6,4,0,1)$ & $(0^{4},4,2,4,6,3,1)$ & (4,2) & (2,1) & [ 1, 9, 10 ] & $\mathbb{Z}$ \\
$\A{(20,220)_2}=\AG(20,4)$ & (56,165,110) & $(23,20,7,5,1)$ & $(0^{4},5,1,4^{2},6)$ & (4,2) & (2,1) & [ 1, 9, 10 ] & $\mathbb{Z}$ \\
$\A{(20,220)_3}=\AG(20,1)$ & (56,165,110) & $(10,45,0^{6},1)$ & $(0^{4},5^{2},0^{2},10)$ & (40,13) & (40,12) & [ 1, 9, 10 ] & $Q_{5}$ \\
$\A{(20,220)_4}=\AG(20,2)$ & (56,165,110) & $(25,15,10,6)$ & $(0^{5},5,10,0,5)$ & (20,4) & (20,3) & [ 1, 9, 10 ] & $Q_{5}$ \\
\hline
$\A{(21,240)_1}=\AG(21,4)$ & (61,180,120) & $(22,28,6,4,0^{2},1)$ & $(0^{4},4,0,4,8,4,0,1)$ & (16,11) & (8,3) & [ 1, 9, 11 ] & $\mathbb{Z}$ \\
$\A{(21,240)_2}=\AG(21,5)$ & (61,180,120) & $(26,20,9,4,2)$ & $(0^{4},5,0,3,4,9)$ & (8,5) & (4,2) & [ 1, 9, 11 ] & $\mathbb{Z}$ \\
$\A{(21,240)_3}=\AG(21,3)$ & (61,180,120) & $(24^{2},9,0,4)$ & $(0^{4},6,0,3,0,12)$ & (48,48) & (24,12) & [ 1, 9, 11 ] & $\mathbb{Z}$ \\
$\A{(21,240)_4}=\AG(21,1)$ & (61,180,120) & $(15,40,5,0^{5},1)$ & $(0^{4},5,0,5,0,11)$ & (40,13) & (40,12) & [ 1, 9, 11 ] & $Q_{5}$ \\
$\A{(21,240)_5}=\AG(21,2)$ & (61,180,120) & $(30,10,15,6)$ & $(0^{6},15,0,6)$ & (120,35) & (120,34) & [ 1, 9, 11 ] & $Q_{5}$ \\
$\A{(21,248)}=\AG(21,6)$ & (63,186,124) & $(25,20,15,2,1)$ & $(0^{4},1,0,11,0,8,0,1)$ & (8,5) & (4,2) & [] & $\mathbb{Q}$ \\
$\A{(21,252)}=\AG(21,7)$ & (64,189,126) & $(24,22,15,3)$ & $(0^{6},12,0,6,3)$ & (12,4) & (6,1) & [] & $K_1$ \\
\hline
$\A{(22,264)_1}=\AG(22,4)$ & (67,198,132) & $(27,25,9,3^{2})$ & $(0^{4},4,0,6,0,6^{2})$ & (12,4) & (6,1) & [ 1, 10, 11 ] & $\mathbb{Z}$ \\
$\A{(22,264)_2}=\AG(22,1)$ & (67,198,132) & $(11,55,0^{7},1)$ & $(0^{5},11,0^{3},11)$ & (44,3) & (110,1) & [ 1, 10, 11 ] & $Q_{11}$ \\
$\A{(22,264)_3}=\AG(22,3)$ & (67,198,132) & $(27,28,0,12)$ & $(0^{6},12,0,9,0,1)$ & (12,4) & (12,4) & [ 1, 10, 11 ] & $Q_{5}$ \\
$\A{(22,276)}=\AG(22,2)$ & (70,207,138) & $(24,30,12,3,1)$ & $(0^{4},1,0,6,3,9,0,3)$ & (12,4) & (6,1) & [] & $\mathbb{Q}$ \\
$\A{(22,288)}=\AG(22,5)$ & (73,216,144) & $(12,58,0^{2},3)$ & $(0^{7},12,6,0,4)$ & (48,48) & (48,48) & [] & $Q_{5}$ \\
\hline
$\A{(23,296)}=\AG(23,1)$ & (75,222,148) & $(27,32,10,4,2)$ & $(0^{4},1,0,6,2,7,4,3)$ & (4,2) & (2,1) & [] & $\mathbb{Q}$ \\
$\A{(23,304)}=\AG(23,2)$ & (77,228,152) & $(16,56,2,0,1,2)$ & $(0^{6},1,8,10,0,4)$ & (16,11) & (16,11) & [] & $Q_{5}$ \\
\hline
$\A{(24,304)}=\AG(24,2)$ & (77,228,152) & $(32^{2},0,12,0^{2},1)$ & $(0^{5},4,0^{2},20)$ & (32,39) & (32,43) & [] & $Q_{8}$ \\
$\A{(24,312)}=\AG(24,1)$ & (79,234,156) & $(12,66,0^{8},1)$ & $(0^{5},6^{2},0^{3},12)$ & (48,36) & (48,38) & [ 1, 11, 12 ] & $Q_{12}$ \\
$\A{(24,316)}=\AG(24,3)$ & (80,237,158) & $(31,32,9,5,3)$ & $(0^{4},1,0,6,1,6^{2},4)$ & (4,2) & (2,1) & [] & $\mathbb{Q}$ \\
$\A{(24,320)}=\AG(24,4)$ & (81,240,160) & $(20,54,4,0^{2},2,1)$ & $(0^{6},2,4,14,0,4)$ & (16,11) & (16,11) & [] & $Q_{5}$ \\
\hline
$\A{(25,320)}=\AG(25,5)$ & (81,240,160) & $(36,32,0,8,4,0,1)$ & $(0^{6},5,0,20)$ & (32,39) & (32,43) & [ 1, 9, 15 ] & $Q_{8}$ \\
$\A{(25,336)_1}=\AG(25,7)$ & (85,252,168) & $(33,34,12,2,3,0,1)$ & $(0^{4},2,0,4^{3},0,11)$ & (8,5) & (4,2) & [ 1, 11, 13 ] & $\mathbb{Z}$ \\
$\A{(25,336)_2}=\AG(25,4)$ & (85,252,168) & $(36,30,9,6,4)$ & $(0^{4},1,0,9,0,3,0,12)$ & (24,14) & (12,4) & [ 1, 11, 13 ] & $\mathbb{Z}$ \\
$\A{(25,336)_3}=\AG(25,6)$ & (85,252,168) & $(36,30,9,6,4)$ & $(0^{4},1,0,6,0,6^{3})$ & (12,4) & (6,1) & [ 1, 11, 13 ] & $\mathbb{Z}$ \\
$\A{(25,336)_4}=\AG(25,2)$ & (85,252,168) & $(36,28,15,0,6)$ & $(0^{4},4,0,3,0,6,0,12)$ & (48,48) & (24,12) & [ 1, 11, 13 ] & $\mathbb{Z}$ \\
$\A{(25,336)_5}=\AG(25,1)$ & (85,252,168) & $(18,60,6,0^{7},1)$ & $(0^{6},12,0^{3},13)$ & (48,36) & (48,38) & [ 1, 11, 13 ] & $Q_{12}$ \\
$\A{(25,336)_6}=\AG(25,8)$ & (85,252,168) & $(24,52,6,0^{3},3)$ & $(0^{6},3,0,18,0,4)$ & (48,48) & (48,48) & [ 1, 11, 13 ] & $Q_{5}$ \\
$\A{(25,360)}=\AG(25,3)$ & (91,270,180) & $(30,40,15,6)$ & $(0^{8},15,0,10)$ & (120,35) & (120,34) & [] & $Q_{5}$ \\
\hline
$\A{(26,364)_1}=\AG(26,4)$ & (92,273,182) & $(35,39,10,4,3,0,1)$ & $(0^{4},1^{2},4^{2},2^{2},7,4,1)$ & (4,2) & (2,1) & [ 1, 12, 13 ] & $\mathbb{Z}$ \\
$\A{(26,364)_2}=\AG(26,3)$ & (92,273,182) & $(37,36,9,6,3,1)$ & $(0^{4},1,0,7,2^{2},1,8,4,1)$ & (4,2) & (2,1) & [ 1, 12, 13 ] & $\mathbb{Z}$ \\
$\A{(26,364)_3}=\AG(26,1)$ & (92,273,182) & $(13,78,0^{9},1)$ & $(0^{6},13,0^{4},13)$ & (52,4) & (156,7) & [ 1, 12, 13 ] & $Q_{13}$ \\
$\A{(26,380)}=\AG(26,2)$ & (96,285,190) & $(35,40,10,11)$ & $(0^{8},11,5,10)$ & (20,4) & (20,3) & [] & $Q_{5}$ \\
\hline
$\A{(27,392)_1}=\AG(27,4)$ & (99,294,196) & $(38,42,9,6,3,0,1)$ & $(0^{4},1,0,5,4,2,0,7,4^{2})$ & (8,5) & (4,2) & [ 1, 13, 13 ] & $\mathbb{Z}$ \\
$\A{(27,392)_2}=\AG(27,3)$ & (99,294,196) & $(39,40,10,6,2^{2})$ & $(0^{4},1,0,6,2^{3},5,6,3)$ & (4,2) & (2,1) & [ 1, 13, 13 ] & $\mathbb{Z}$ \\
$\A{(27,392)_3}=\AG(27,2)$ & (99,294,196) & $(39,40,10,6,2^{2})$ & $(0^{4},1,0,5,4,1,2,4,8,2)$ & (4,2) & (2,1) & [ 1, 13, 13 ] & $\mathbb{Z}$ \\
$\A{(27,400)}=\AG(27,1)$ & (101,300,200) & $(40^{2},6,14,1)$ & $(0^{8},8^{2},11)$ & (8,5) & (8,3) & [] & $Q_{5}$ \\
\hline
$\A{(28,420)_1}=\AG(28,4)$ & (106,315,210) & $(41,44,11,6,2,1^{2})$ & $(0^{4},1,0,4^{2},2,1,4,6^{2})$ & (2,1) & (1,1) & [ 1, 13, 14 ] & $\mathbb{Z}$ \\
$\A{(28,420)_2}=\AG(28,5)$ & (106,315,210) & $(42^{2},12,6,1,3)$ & $(0^{4},1,0,4^{2},1,3,1,10,4)$ & (4,2) & (2,1) & [ 1, 13, 14 ] & $\mathbb{Z}$ \\
$\A{(28,420)_3}=\AG(28,6)$ & (106,315,210) & $(42^{2},12,6,1,3)$ & $(0^{4},1,0,6,0,3^{3},6^{2})$ & (12,4) & (6,1) & [ 1, 13, 14 ] & $\mathbb{Z}$ \\
$\A{(28,420)_4}=\AG(28,1)$ & (106,315,210) & $(14,91,0^{10},1)$ & $(0^{6},7^{2},0^{4},14)$ & (56,12) & (84,7) & [ 1, 13, 14 ] & $Q_{7}$ \\
$\A{(28,420)_5}=\AG(28,2)$ & (106,315,210) & $(45,40,3,15,3)$ & $(0^{8},6,9,13)$ & (12,4) & (6,1) & [ 1, 13, 14 ] & $Q_{5}$ \\
$\A{(28,420)_6}=\AG(28,3)$ & (106,315,210) & $(45,40,3,15,3)$ & $(0^{8},6,9,13)$ & (12,4) & (6,1) & [ 1, 13, 14 ] & $Q_{5}$ \\
\hline
$\A{(29,440)}=\AG(29,2)$ & (111,330,220) & $(50,40,1,14,6)$ & $(0^{8},5,8,16)$ & (8,5) & (8,3) & [] & $Q_{5}$ \\
$\A{(29,448)_1}=\AG(29,3)$ & (113,336,224) & $(44,46,13,6,2,0,2)$ & $(0^{4},1,0,3,4,3,0,4^{2},10)$ & (8,5) & (4,2) & [ 1, 13, 15 ] & $\mathbb{Z}$ \\
$\A{(29,448)_2}=\AG(29,4)$ & (113,336,224) & $(45,44,14,6,1,2,1)$ & $(0^{4},1,0,3,4,2^{2},1,8^{2})$ & (4,2) & (2,1) & [ 1, 13, 15 ] & $\mathbb{Z}$ \\
$\A{(29,448)_3}=\AG(29,5)$ & (113,336,224) & $(45,44,14,6,1,2,1)$ & $(0^{4},1,0,4,2,3,2^{2},6,9)$ & (4,2) & (2,1) & [ 1, 13, 15 ] & $\mathbb{Z}$ \\
$\A{(29,448)_4}=\AG(29,1)$ & (113,336,224) & $(21,84,7,0^{9},1)$ & $(0^{6},7,0,7,0^{3},15)$ & (56,12) & (84,7) & [ 1, 13, 15 ] & $Q_{7}$ \\
\hline
$\A{(30,460)}=\AG(30,2)$ & (116,345,230) & $(55,40,0,11,10)$ & $(0^{8},5^{2},20)$ & (20,4) & (20,3) & [] & $Q_{5}$ \\
$\A{(30,476)}=\AG(30,3)$ & (120,357,238) & $(49,44,17,6,1^{2},2)$ & $(0^{4},1,0,3,2,4,1,2,4,13)$ & (4,2) & (2,1) & [ 1, 13, 16 ] & $\mathbb{Z}$ \\
$\A{(30,480)}=\AG(30,1)$ & (121,360,240) & $(15,105,0^{11},1)$ & $(0^{7},15,0^{5},15)$ & (60,12) & (120,36) & [ 1, 14, 15 ] & $Q_{15}$ \\
\hline
$\A{(31,480)}=\AG(31,1)$ & (121,360,240) & $(60,40,0,6,15)$ & $(0^{8},6,0,25)$ & (120,35) & (120,34) & [ 1, 11, 19 ] & $Q_{5}$ \\
$\A{(31,504)_1}=\AG(31,2)$ & (127,378,252) & $(54,42,21,6,1,0,3)$ & $(0^{4},1,0^{3},9,0,6,0,15)$ & (24,14) & (12,4) & [ 1, 13, 17 ] & $\mathbb{Z}$ \\
$\A{(31,504)_2}=\AG(31,3)$ & (127,378,252) & $(54,42,21,6,1,0,3)$ & $(0^{4},1,0,3,0,6,0,3,0,18)$ & (24,14) & (12,4) & [ 1, 13, 17 ] & $\mathbb{Z}$ \\
\hline
$\A{(32,544)}=\AG(32,1)$ & (137,408,272) & $(16,120,0^{12},1)$ & $(0^{7},8^{2},0^{5},16)$ & (64,186) & (128,913) & [ 1, 15, 16 ] & $Q_{16}$ \\
\hline
$\A{(33,576)}=\AG(33,1)$ & (145,432,288) & $(24,112,8,0^{11},1)$ & $(0^{8},16,0^{5},17)$ & (64,186) & (128,913) & [ 1, 15, 17 ] & $Q_{16}$ \\
\hline
$\A{(34,612)_1}=\AG(34,2)$ & (154,459,306) & $(60,63,18,6,4,0,3)$ & $(0^{6},3^{3},0,4,0,6,0,9,6)$ & (12,4) & (6,1) & [ 1, 16, 17 ] & $\mathbb{Z}$ \\
$\A{(34,612)_2}=\AG(34,1)$ & (154,459,306) & $(17,136,0^{13},1)$ & $(0^{8},17,0^{6},17)$ & (68,4) & (272,50) & [ 1, 16, 17 ] & $Q_{17}$ \\
\hline
$\A{(35,680)}=\AG(35,1)$ & (171,510,340) & $(70,55,25,21)$ & $(0^{12},25,0,10)$ & (20,4) & (20,4) & [] & $K_2$ \\
\hline
$\A{(36,684)}=\AG(36,1)$ & (172,513,342) & $(18,153,0^{14},1)$ & $(0^{8},9^{2},0^{6},18)$ & (72,17) & (108,26) & [ 1, 17, 18 ] & $Q_{9}$ \\
\hline
$\A{(37,720)_1}=\AG(37,3)$ & (181,540,360) & $(72^{2},24,0,10,0,3)$ & $(0^{6},3,0,6,0,4,0^{3},12,0,12)$ & (48,48) & (24,12) & [ 1, 17, 19 ] & $\mathbb{Z}$ \\
$\A{(37,720)_2}=\AG(37,1)$ & (181,540,360) & $(27,144,9,0^{13},1)$ & $(0^{8},9,0,9,0^{5},19)$ & (72,17) & (108,26) & [ 1, 17, 19 ] & $Q_{9}$ \\
$\A{(37,720)_3}=\AG(37,2)$ & (181,540,360) & $(72^{2},12,24,0^{6},1)$ & $(0^{10},13,0^{3},24)$ & (48,36) & (48,17) & [ 1, 17, 19 ] & $Q_{12}$ \\
\hline
& & \\
\caption{Invariants for the known simplicial arrangements of rank $3$ with up to $37$ lines.}
\label{tab:invariants}
\end{longtable}}

\clearpage

\newcommand{\etalchar}[1]{$^{#1}$}
\providecommand{\bysame}{\leavevmode\hbox to3em{\hrulefill}\thinspace}
\providecommand{\MR}{\relax\ifhmode\unskip\space\fi MR }
\providecommand{\MRhref}[2]{%
  \href{http://www.ams.org/mathscinet-getitem?mr=#1}{#2}
}
\providecommand{\href}[2]{#2}

\addresseshere

\appendix

\newpage
\section{Tables of normals}
\label{app:lists}

Tables~\ref{tab:normals1} to \ref{tab:normals9} give the normals for the simplicial hyperplane arrangements of rank $3$ with at most $37$ hyperplanes, excluding the reducible near-pencil arrangements.
We use the notation $\A(m,r)_i$ to indicate the $i$-th hyperplane arrangement with $m$ hyperplanes and $r$ regions.
We use the following algebraic numbers:
\begin{align*}
\tau    & = (1+\sqrt5)/2\\
\rho    & = \text{real root of } x^3-3x-25\text{ at }\approx 3.26463299874008)\\
\gamma  & = \text{real root of } x^4-3x^3+3x^2-3x+1\text{ at }\approx 0.4643126132 \\
\{r_{i} & = \zeta_i+\zeta_i^{-1}~:~\text{ $\zeta_i$ a primitive $i$-th root of unity}, i \in [18] \} \\   
\end{align*}
The normals for the crystallographic arrangements and their orderings are chosen in a canonical way: we chose the root system of the chamber which is lexicographically the smallest one including those with permuted coordinates. 
The normals for each arrangement are also ordered lexicographically. 
This is why we need permutations in the list of wiring diagrams in Appendix \ref{app:wd}.
Although most of these simplicial arrangements are presented in Gr\"unbaum's catalogue \cite{grunbaum_2009}, it requires some work to extract the normals from his pictures. 
One can for example obtain a realization by using the matroid structure as performed in \cite{p-C10b}.
The reader interested to use the normals is invited to use the \LaTeX\ source to extract the data.
Alternatively, one may use the \texttt{Sage}-package \texttt{CN-HyperArr} \cite{cn_hyperarr}.

\begin{table}[H]
\footnotesize
\centering
\caption{The normals of all known simplicial hyperplane arrangements of rank three with $37$ hyperplanes.}
\label{tab:normals9}
\end{table}

\clearpage
\section{Wiring diagrams}
\label{app:wd}

In this section, we reproduce wiring diagrams that correspond to the oriented matroids defined by the irreducible simplicial arrangements of rank 3.
A wiring diagram consists of a sequence of moves between wires in such a way that each pair of wires meets exactly once in some move (see for example \cite{oriented_matroids} for details). 
Figure \ref{wirsim} shows an example: the wires in this picture correspond to the lines of the arrangement $\A(31,480)=\AG(31,1)$. 
The moves are the intersection points. 
To encode the information of the moves it suffices to list for each intersection point the first and last label of the bundle of wires which meet. 
Thus in this example, the moves begin with $(11,12)$, $(3,4)$, $(21,22)$, $(12,16)$, $(16,18)$ and so on. 
The tables in the appendix contain such a description for each known irreducible simplicial arrangement with up to $37$ lines.
The first sequence is a bijection between these labels and the normals of Appendix \ref{app:lists}.

\begin{figure}[h]
\vspace{20pt}
\begin{center}
\setlength{\unitlength}{0.45pt}
}

\end{document}